\newtheorem{thm}{Theorem}
\newtheorem{introthm}{Theorem}
\newtheorem{introcor}{Corollary}
\newtheorem{lem}[thm]{Lemma}
\newtheorem{prop}[thm]{Proposition}
\newtheorem{introcon}{Conjecture}
\theoremstyle{definition}
\newtheorem{defn}[thm]{Definition}
\newtheorem{introdefn}{Definition}
\newtheorem{rem}[thm]{Remark}
\newcommand{\RN}[1]{%
  \textup{\uppercase\expandafter{\romannumeral#1}}%
}
\numberwithin{thm}{section}
\def\Z{\mathbf{Z}}
\def\Q{\mathbf{Q}}
\def\F{\mathbf{F}}
\def\Fp{\F_p}
\def\Ftwo{\F_2}
\def\bA{\mathbf{A}}
\def\Xset{\mathcal{C}}
\def\Xset{\mathcal{C}}
\def\A{\mathcal{A}}
\def\O{\mathcal{O}}
\def\cP{\mathcal{P}}
\def\cP{\mathcal{P}}
\def\H{\mathcal{H}}
    \DeclareFontFamily{U}{wncy}{}
    \DeclareFontShape{U}{wncy}{m}{n}{<->wncyr10}{}
    \DeclareSymbolFont{mcy}{U}{wncy}{m}{n}
    \DeclareMathSymbol{\Sh}{\mathord}{mcy}{"58}
\def\l{\mathfrak{q}}
\def\p{\mathfrak{p}}
\def\q{\mathfrak{q}}
\def\d{\mathfrak{d}}
\def\Gl{\mathrm{GL}}
\def\Hom{\mathrm{Hom}}
\def\Gal{\mathrm{Gal}}
\def\Sel{\mathrm{Sel}}
\def\Frob{\mathrm{Frob}}
\def\ur{\mathrm{ur}}
\def\res{\mathrm{res}}
\def\dimp{\dim_{\Fp}}
\def\dimtwo{\dim_{\Ftwo}}
\def\ram{\mathrm{ram}}
\def\Pic{\mathrm{Pic}}
\def\iK{\bA_K^\times}
\def\N{\mathbf{N}}
\def\too{\longrightarrow}
\def\dirsum#1{\underset{#1}{\textstyle\bigoplus}}
\def\d2{\dim_{\Ftwo}}
\title[$2$-Selmer near-companion curves]
    {$2$-Selmer near-companion curves}
\author{Myungjun Yu}
\address{Department of Mathematics,
University of Michigan, Ann Arbor, MI 48109-1043,
USA}
\email{\href{mailto:myungjuy@umich.edu}{myungjuy@umich.edu}}
\begin{document}

\begin{abstract}
Let $E$ and $A$ be elliptic curves over a number field $K$. Let $\chi$ be a quadratic character of $K$. We prove the conjecture posed by Mazur and Rubin on $n$-Selmer near-companion curves in the case $n=2$. Namely, we show if the difference of the $2$-Selmer ranks of $E^\chi$ and $A^\chi$ is bounded independent of $\chi$, there is a $G_K$-module isomorphism $E[2] \cong A[2]$. 
\end{abstract}

\maketitle
\sloppy

\section*{Introduction}
In \cite{companion}, Mazur and Rubin study the so called Selmer companion curves. By definition, elliptic curves $E$ and $A$ over a number field $K$ are called $n$-Selmer companion curves over $K$ if for every quadratic character $\chi \in \Hom(G_K, \{\pm1\})$, where $G_K$ is the absolute Galois group of $K$, there exists a group isomorphism between $\Sel_n(E^\chi/K)$ and $\Sel_n(A^\chi/K)$, where $E^\chi$ and $A^\chi$ denote the quadratic twists of $E$ and $A$ by $\chi$, respectively. This definition originates from the question that asks what information about an elliptic curve $E/K$ could be read off from the function 
$$\Hom(G_K, \{\pm1\}) \to \N_{\ge 0}$$ 
taking $\chi$ to $\dim_{\Fp}(\Sel_p(E^\chi/K))$ for a prime $p$. They also defined a weaker condition on $E$ and $A$ the so called $n$-Selmer near-companion curves (\cite[Definition 7.12]{companion}) over $K$. To simplify the definition, we define $n$-Selmer near-companion curves only when $n$ is a prime number, which is sufficient for our purposes.

\begin{introdefn}
Let $p$ be a prime number. Let $E$ and $A$ be elliptic curves over a number field $K$. We say $E$ and $A$ are {\em $p$-Selmer near-companion curves} over $K$ if there exists a constant $C := C(E,A,K)$ such that for every $\chi \in \Hom(G_K, \{\pm 1\})$, 
$$|\dimp(\Sel_p(E^\chi/K)) - \dimp(\Sel_p(A^\chi/K))| < C.$$
\end{introdefn}

If elliptic curves $E$ and $A$ over a number field $K$ are randomly chosen, there is no reason to expect they are $p$-Selmer near-companion curves over $K$. Therefore, it seems natural to expect that if $E$ and $A$ are $p$-Selmer near-companion curves over $K$, they should be closely related. In fact, Mazur and Rubin conjectured the following (\cite[Conjecture 7.15]{companion}).

\begin{introcon}
Suppose that elliptic curves $E$ and $A$ over a number field $K$ are $p$-Selmer near-companion curves over $K$, then there exists a $G_K$-module isomorphism $E[p] \cong A[p]$. 
\end{introcon}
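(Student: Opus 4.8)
The plan is to prove the contrapositive of the $p=2$ case: assuming there is no $G_K$-module isomorphism $E[2]\cong A[2]$, I will produce quadratic characters $\chi$ for which $\dimtwo\Sel_2(E^\chi/K)-\dimtwo\Sel_2(A^\chi/K)$ is arbitrarily large. The underlying mechanism is the familiar one for moving $2$-Selmer ranks in twist families. Since the twist affects the Galois action on $2$-torsion only through multiplication by $\pm1$, which acts trivially on $E[2]$, one has $E^\chi[2]\cong E[2]$ as $G_K$-modules, so $\Sel_2(E^\chi/K)$ and $\Sel_2(E/K)$ are both subgroups of $H^1(K,E[2])$ cut out by Lagrangian local conditions (with respect to the local Tate pairing coming from the Weil pairing) that agree at every place where $\chi$ is unramified. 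Changing the ramification of $\chi$ alters only finitely many local conditions, and Poitou--Tate duality, in the guise of the Greenberg--Wiles comparison formula, both bounds and — with the help of the Chebotarev density theorem — lets one prescribe the resulting change in $\dimtwo\Sel_2(E^\chi/K)$. In particular $\dimtwo\Sel_2(E^\chi/K)$ can be driven to $+\infty$ by twisting at a suitable growing set of auxiliary primes; the whole difficulty is to do this while keeping $\dimtwo\Sel_2(A^\chi/K)$ under control.

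The key local observation is that if $q\nmid 2\infty$ is a prime at which $A$ has good reduction and $\Frob_q$ acts on $A[2]\setminus\{0\}$ as a $3$-cycle (equivalently $A[2]^{G_{K_q}}=0$), then $H^1(K_q,A[2])=0$ by local duality, so a quadratic character ramified only at such primes changes no local condition for $A$: one gets $\Sel_2(A^\chi/K)=\Sel_2(A/K)$ on the nose. Such primes exist with positive density exactly when $A$ has no rational $2$-torsion point, i.e.\ when the image of $G_K\to\GL_2(\F_2)=S_3$ on $A[2]$ has order divisible by $3$. So, in the case where $A$ has no rational $2$-torsion point, it suffices to run the rank-raising induction for $E$ (by full $2$-descent if $E$ also has no rational $2$-torsion, and by descent along the $2$-isogeny with rational kernel otherwise) while insisting that every prime it introduces be one of these ``$3$-cycle for $A$'' primes — together with a final auxiliary prime, chosen in the same way, to guarantee that a quadratic character with the prescribed ramification and trivial at every place dividing $2\infty$ exists. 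At each stage this is a Chebotarev condition: one needs a Frobenius that is a $3$-cycle on $A[2]$ and is simultaneously placed correctly in $K(E[2])$ and in the finite elementary $2$-extension cut out by the current Selmer classes of $E^\chi$ and its dual. The order-$3$ constraint on $A[2]$ is coprime to that pro-$2$ tower, so the only way to get stuck is a bad overlap between $K(E[2])$ and $K(A[2])$; a short analysis of the possible images in $S_3$ — using the hypothesis $E[2]\not\cong A[2]$, which forbids any $G_K$-equivariant identification of the two modules — shows that $\Gal(K(E[2],A[2])/K)$ is always big enough for the required Frobenius class to be nonempty (when both images equal $S_3$, a Goursat argument pins $\Gal(K(E[2],A[2])/K)$ down to either $S_3\times S_3$ or the fibre product $S_3\times_{\Z/2}S_3$, both of which contain an element trivial on $E[2]$ and a $3$-cycle on $A[2]$).

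By symmetry the same argument applies when $E$ has no rational $2$-torsion point, so the remaining case is that both $E$ and $A$ have a rational $2$-torsion point. Then both mod-$2$ images have order at most $2$, each $2$-torsion module is isomorphic to $\F_2[\Gal(K(E[2])/K)]$, respectively $\F_2[\Gal(K(A[2])/K)]$, and the hypothesis $E[2]\not\cong A[2]$ just says the fields $K(E[2])$ and $K(A[2])$ are distinct, at least one of them — say $K(E[2])$ — being a genuine quadratic extension of $K$. Here no ``$3$-cycle'' primes are available for either curve, and I would instead use descent along the $2$-isogenies $\phi\colon E\to E'$ and $\phi_A\colon A\to A'$ with rational kernels: $\dimtwo\Sel_2(E^\chi/K)$ and $\dimtwo\Sel_2(A^\chi/K)$ each equal the sum of the two isogeny-Selmer dimensions up to a bounded error, and the isogeny-Selmer groups live inside $\Hom(G_K,\F_2)$ and $K^\times/(K^\times)^2$ with completely explicit local images. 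One then raises $\dimtwo\Sel_\phi(E^\chi/K)$ by twisting at primes that split in $K(E[2])$, and — this is where $E[2]\not\cong A[2]$ is exploited — one can further require each such prime to be inert in $K(A[2])$ (or split, if $A$ has full rational $2$-torsion, in which case it is automatic), which freezes the local contributions of these primes to both $\phi_A$- and $\hat\phi_A$-descent and so keeps $\dimtwo\Sel_2(A^\chi/K)$ from growing along the sequence.

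The routine ingredients are the Greenberg--Wiles/Poitou--Tate bookkeeping, the description of local conditions at bad and archimedean places and at primes above $2$ (untouched, since the characters are taken trivial there), and the verification, case by case on the images in $S_3$, that the relevant Chebotarev sets are nonempty. The main obstacle, I expect, is the demand that one and the same character $\chi$ do two apparently conflicting things — push one curve's Selmer rank to infinity while leaving the other's essentially fixed — which means the rank-raising induction must be carried out with the side constraints on Frobenius preserved at every step, i.e.\ one must control, throughout the induction, the interplay of the growing $2$-extension cut out by $E^\chi$'s Selmer data with $K(A[2])$ and, in the last case, with $A$'s isogeny-descent fields. Making the neutrality of the twisting primes for $A$ genuinely stable under the induction — rather than merely true one prime at a time — is the crux, and it is exactly there that $E[2]\not\cong A[2]$ must be used not just to separate the two curves once but to guarantee that the separation can be maintained compatibly; the image analysis in $\GL_2(\F_2)$ is what makes that possible.
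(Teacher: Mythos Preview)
Your overall strategy --- prove the contrapositive for $p=2$, divide into cases according to the images of $G_K$ on $E[2]$ and $A[2]$, and use Chebotarev to locate primes where a ramified twist is neutral for one curve while raising the Selmer rank of the other --- is exactly the paper's. In the cases where at least one curve has no rational $2$-torsion point (image of order divisible by $3$), your plan matches Sections~4 and~5; the paper packages the rank-raising step through a Lagrangian-subspace count (Proposition~\ref{crucial}) together with $K(E[4])$ rather than through Greenberg--Wiles and an isogeny descent, but the content is the same.

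The divergence, and the gap, is in the last case, where both $E$ and $A$ have a rational $2$-torsion point (so $[M:K]\le 2$, $[M':K]=2$, $M\neq M'$). You propose isogeny descent and assert that twisting at primes split in $K(E[2])$ and inert in $K(A[2])$ will ``freeze'' both the $\phi_A$- and $\hat\phi_A$-Selmer contributions. This does not hold as stated. For one thing, the isogenous curve $A'=A/\langle P_1\rangle$ has $K(A'[2])$ in general a \emph{different} quadratic field from $K(A[2])$, so ``inert in $K(A[2])$'' says nothing about the $\hat\phi_A$-local condition. More directly, at such a prime $\ell$ one has $\dim_{\Ftwo} A(K_\ell)[2]=1$, so by Lemma~\ref{ramhv} and Kramer's parity relation (Theorem~\ref{parity}) a single ramified twist at $\ell$ flips the parity of $r_2(A)$: the $A$-Selmer rank necessarily moves, and controlling the \emph{sign} of that movement over many inductive steps is precisely the difficulty you name at the end but do not resolve.

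The paper handles this case (Section~6) by a two-step argument that supplies the missing idea. First (Proposition~\ref{plus22}) one twists at a prime splitting completely in an auxiliary field containing $K(E[4])K(A[4])$; this raises \emph{both} $r_2(E)$ and $r_2(A)$ by $2$, but the payoff is that it manufactures a class $s\in\Sel_2(A^\chi)$ whose splitting field over $M'$ is not contained in $M(\sqrt{M^\times})$. Second (Proposition~\ref{22twists}), this $s$ serves as a witness: by Chebotarev one finds a further prime $\ell$ with $\Frob_\ell$ trivial on the $E$-side control field but nontrivial on $\tilde s$, so $\res_\ell(s)\neq 0$, and then Proposition~\ref{crucial}(iii) forces $r_2(A^{\chi'})\le r_2(A^\chi)$ while $r_2(E^{\chi'})=r_2(E^\chi)+2$. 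Iterating yields the unbounded gap. The ingredient your outline lacks is this deliberate creation of an $A$-Selmer class lying outside the pro-$2$ Kummer tower over $M$, which is what converts a parity-level statement about $A$ into the needed inequality.
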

They proved the converse (with a stronger assumption if $p =2 \text{ or } 3$) of the conjecture as follows (\cite[Theorem 7.13]{companion}).
\begin{introthm}[Mazur-Rubin]
\label{int1}
Let $E$ and $A$ be elliptic curves over a number field $K$. Let $m_p = p^2$ if $p =2 \text{ or } 3$, and $m_p= p$ if $p>3$. Suppose that there is a $G_K$-module isomorphism $E[m_p] \cong A[m_p]$. Then $E$ and $A$ are $p$-Selmer near-companion curves over $K$.
\end{introthm}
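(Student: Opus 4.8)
\medskip
\noindent\emph{Proof proposal.} Fix a $G_K$-module isomorphism $\phi\colon E[m_p]\isom A[m_p]$. Reducing $\phi$ modulo $p$ and twisting by $\chi$ produces, for every $\chi\in\Hom(G_K,\{\pm1\})$, a $G_K$-module isomorphism $\phi_\chi\colon E^\chi[p]\isom A^\chi[p]$ (when $p=2$ this is simply $\phi\bmod 2\colon E[2]\isom A[2]$, independent of $\chi$), and hence an isomorphism $\phi_\chi^{*}\colon H^1(G_K,E^\chi[p])\isom H^1(G_K,A^\chi[p])$ compatible with all localisation maps. Recall that $\Sel_p(E^\chi/K)$ is the subgroup of $H^1(G_K,E^\chi[p])$ cut out by the local conditions $L_v(E^\chi):=\im(E^\chi(K_v)/p\,E^\chi(K_v)\hookrightarrow H^1(K_v,E^\chi[p]))$, and likewise for $A^\chi$. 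The plan is: \emph{(i)} to exhibit a finite set $S=S(E,A,K,p)$ of places of $K$, independent of $\chi$, such that $\phi_\chi^{*}(L_v(E^\chi))=L_v(A^\chi)$ for every $v\notin S$; and \emph{(ii)} to deduce from this a bound on $|\dimp\Sel_p(E^\chi/K)-\dimp\Sel_p(A^\chi/K)|$ that depends only on $S$, $K$ and $p$.

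For \emph{(ii)}: given $S$ as in \emph{(i)}, form the more restrictive collection $L''_v:=L_v(E^\chi)\cap(\phi_\chi^{*})^{-1}L_v(A^\chi)$ for $v\in S$ and $L''_v:=L_v(E^\chi)$ for $v\notin S$, with associated Selmer group $\Sel''$. Then $\Sel''\subseteq\Sel_p(E^\chi/K)$, and localisation embeds $\Sel_p(E^\chi/K)/\Sel''$ into $\bigoplus_{v\in S}L_v(E^\chi)/L''_v$, whence $\dimp\Sel_p(E^\chi/K)-\dimp\Sel''\le\sum_{v\in S}\dimp H^1(K_v,E^\chi[p])$. Since $\phi_\chi^{*}$ carries the collection $\{L''_v\}$ onto the corresponding restricted collection for $A^\chi$, the same inequality holds with $E$ replaced by $A$, and the triangle inequality gives $|\dimp\Sel_p(E^\chi/K)-\dimp\Sel_p(A^\chi/K)|\le 2\sum_{v\in S}\dimp H^1(K_v,E^\chi[p])$. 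By the local Euler characteristic formula together with local duality and the Weil pairing, $\dimp H^1(K_v,E^\chi[p])=2\dimp H^0(K_v,E^\chi[p])+2[K_v:\Q_p]\cdot\mathbf{1}_{v\mid p}\le 4+2[K_v:\Q_p]\cdot\mathbf{1}_{v\mid p}$, which is bounded independently of $\chi$; summing over the finite set $S$ produces an explicit constant $C=C(E,A,K,p)$. (Alternatively, one may apply the Greenberg--Wiles/Poitou--Tate comparison between this restricted structure and the dual ``relaxed'' structure $L_v\mapsto L_v(E^\chi)+(\phi_\chi^{*})^{-1}L_v(A^\chi)$, which are mutually dual because each $L_v$ equals its own annihilator under cup product; the crude estimate above already suffices.)

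The real content is \emph{(i)}, and I claim one may take $S=S_0$, the finite set of places of $K$ lying above $p$ or $\infty$ together with the places of bad reduction of $E$ or of $A$. If $v\notin S_0$ and $\chi$ is unramified at $v$, then $E^\chi$ and $A^\chi$ still have good reduction at $v$ (N\'eron--Ogg--Shafarevich) and $v\nmid p$, so $L_v(E^\chi)=\Hu(K_v,E^\chi[p])=\ker(H^1(K_v,E^\chi[p])\to H^1(I_v,E^\chi[p]))$, which is intrinsic to the $G_{K_v}$-module $E^\chi[p]$; thus $\phi_\chi^{*}$ sends it to $\Hu(K_v,A^\chi[p])=L_v(A^\chi)$. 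The remaining --- and essential --- case is $v\notin S_0$ with $\chi$ ramified at $v$: then $E^\chi$ acquires additive, potentially good (since $v\nmid p$) reduction at $v$, and $L_v(E^\chi)$ is no longer the unramified subgroup. Here one must prove that $L_v(E^\chi)\subseteq H^1(K_v,E^\chi[p])$ is determined purely by the $G_{K_v}$-module $E^\chi[m_p]$, i.e.\ that $\psi^{*}(L_v(E^\chi))=L_v(A^\chi)$ for \emph{every} $G_{K_v}$-module isomorphism $\psi\colon E^\chi[m_p]\isom A^\chi[m_p]$; this is done by analysing the N\'eron model of $E^\chi$ over $\O_{K_v}$ and its component group, and reading off $E^\chi(K_v)/p\,E^\chi(K_v)$ and the image of its Kummer map. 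Taking $\psi=\phi_\chi$ then completes \emph{(i)}. I expect this local lemma to be the main obstacle, and it is exactly where the exponent $m_p$ enters: for $p\ge5$ the $G_{K_v}$-module $E^\chi[p]$ already determines the Kummer image, but for $p=2,3$ the potentially good reduction type (equivalently, the relevant automorphisms of the reduction) is genuinely not recovered from $E^\chi[p]$ alone, so one is forced down to $E^\chi[p^2]$ --- which is why $m_p=p^2$ in those two cases.
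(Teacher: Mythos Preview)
Your proposal follows exactly the paper's sketch: show that the local conditions of $\Sel_p(E^\chi/K)$ and $\Sel_p(A^\chi/K)$ coincide outside a fixed finite set $S$ independent of $\chi$ (with the ramified, good-reduction case correctly identified as the crux and the source of the exponent $m_p$), and then bound the discrepancy at $S$ uniformly in $\chi$. The only cosmetic difference is in the second step, where the paper invokes \cite[Theorem~2.3.4]{kolyvagin} to obtain directly the bound $\sum_{v\in S}\dimp(E^\chi(K_v)/pE^\chi(K_v))$, whereas your restricted-Selmer triangle-inequality argument yields a constant-factor larger bound; either is uniform in $\chi$, so both suffice.
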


Let us brifely discuss the idea of the proof of Theorem \ref{int1}. Suppose that there is a $G_K$-module isomorphism $E[m_p] \cong A[m_p]$. Then one can prove that for every $\chi \in \Hom(G_K, \{\pm 1\})$, the local conditions of $\Sel_p(E^\chi/K)$ and $\Sel_p(A^\chi/K)$ are the same everywhere except possibly at places in some finite set $S_{(E,K,p)}$ of places of $K$, which is independent of the choice of $\chi$. Applying \cite[Theorem 2.3.4]{kolyvagin}, one can prove
$$
|\dimp(\Sel_p(E^\chi/K))-\dimp(\Sel_p(A^\chi/K))| \le \displaystyle\sum_{v \in S_{(E,K,p)}} \dimp(E^\chi(K_v)/pE^\chi(K_v)).
$$
Moreover the RHS of the inequality is bounded by a certain constant, which is independent of the choice of $\chi$. For example, when $v$ is a prime of $K$ above $p$, we have $$\dimp(E^\chi(K_v)/pE^\chi(K_v)) =  [K_v:\Q_p]  + \dimp(E^\chi(K_v)[p]) \le [K_v:\Q_p] + 2.$$

The main theorem of this article is the following.
\begin{introthm}
\label{maintheorem}
Suppose that elliptic curves $E$ and $A$ over a number field $K$ are $2$-Selmer near-companion curves over $K$, then there is a $G_K$-module isomorphism $E[2] \cong A[2]$.
\end{introthm}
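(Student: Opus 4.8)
The approach is by contraposition: assume there is no $G_K$-module isomorphism $E[2] \cong A[2]$, and construct, for each constant $C$, a quadratic character $\chi$ with $|\dimtwo\Sel_2(E^\chi/K) - \dimtwo\Sel_2(A^\chi/K)| > C$. The first ingredient is group-theoretic. A $2$-dimensional $\Ftwo$-representation of $G_K$ is determined up to isomorphism by the conjugacy class of the attached homomorphism $G_K \to \GL_2(\Ftwo) \cong S_3$, and since quadratic twisting identifies $E^\chi[2]$ with $E[2]$ canonically (both have $2$-torsion with the same $x$-coordinates), one may regard $\Sel_2(E^\chi/K)$ as a Selmer group of the \emph{fixed} module $E[2]$ with $\chi$-dependent local conditions, and likewise for $A$ (cf. the local-condition analysis in \cite{companion}). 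For a prime $\q\nmid 2$ of good reduction, $\dimtwo E(K_\q)/2E(K_\q) = \dimtwo E(K_\q)[2] = \dimtwo (E[2])^{\Frob_\q}$, and under $\GL_2(\Ftwo)\cong S_3$ this equals $2$, $1$, or $0$ according as $\Frob_\q$ acts as the identity, a transposition, or a $3$-cycle; in particular it depends only on the conjugacy class of $\Frob_\q$. A Chebotarev argument in $L := K(E[2],A[2])$ then shows that if $\q\mapsto\dimtwo E(K_\q)[2]$ and $\q\mapsto\dimtwo A(K_\q)[2]$ agreed outside a set of primes of density $0$, then $\bar\rho_{E,2}$ and $\bar\rho_{A,2}$ would have the same kernel, and a short case analysis on the four possible common Galois groups ($1$, $\Z/2$, $\Z/3$, $S_3$) forces the images to be conjugate in $S_3$, whence $E[2]\cong A[2]$ --- a contradiction. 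So there is a positive-density set $\cP$ of primes $\q\nmid 2$ of good reduction for $E$ and $A$ with $\dimtwo E(K_\q)[2]\ne\dimtwo A(K_\q)[2]$, and after possibly interchanging $E$ and $A$ we may assume $\dimtwo E(K_\q)[2] > \dimtwo A(K_\q)[2]$ on $\cP$.

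Next I would convert membership in $\cP$ into $2$-Selmer rank discrepancies by building $\chi$ ramified at many primes of $\cP$, via a Mazur--Rubin-style rank-changing-prime induction. Adjoining $\q\in\cP$ to the ramification set of $\chi$ moves the local condition at $\q$ on the $E$-side from the unramified Lagrangian of $H^1(K_\q,E[2])$ (of dimension $2\dimtwo E(K_\q)[2]\ge 2$) to the transverse "ramified" one, and likewise on the $A$-side inside the strictly smaller space $H^1(K_\q,A[2])$ of dimension $2\dimtwo A(K_\q)[2]$. The cleanest case is when $\cP$ contains a positive-density subset of primes with $\dimtwo E(K_\q)[2]=1$ and $\dimtwo A(K_\q)[2]=0$: then $H^1(K_\q,A[2])=0$, so the $A$-Selmer group is untouched, while $\q$ is exactly a Mazur--Rubin rank-changing prime for $E$, so choosing $\q$ by a Chebotarev condition relative to $L$ and the ray class fields attached to the Selmer data accumulated so far makes $\dimtwo\Sel_2(E^\chi)$ increase by $1$; iterating $t$ times beats $C$. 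In general we cannot freeze the $A$-side, so instead we track the \emph{difference}: adjoining $\q$ changes $\dimtwo\Sel_2(A^\chi)$ by a controlled amount (by exactly $\pm1$ when $\dimtwo A(K_\q)[2]=1$), while it changes $\dimtwo\Sel_2(E^\chi)$ by an amount whose parity is governed by the change in the global root number $w(E^\chi)$ at $\q$ and whose sign is steered by the same inductive prime selection, so that the difference strictly increases at each step. Finally, in the degenerate "abelian" configuration, where both $\bar\rho_{E,2}$ and $\bar\rho_{A,2}$ have image of order $\le 2$ but attached to \emph{distinct} quadratic characters (so every $\q\in\cP$ has $\{\dimtwo E(K_\q)[2],\dimtwo A(K_\q)[2]\}=\{2,1\}$), one computes $\dimtwo\Sel_2(E^\chi)$ and $\dimtwo\Sel_2(A^\chi)$ directly by descent along the rational $2$-isogenies these curves carry, where the dependence on the ramified primes of $\chi$ is explicit and the discrepancy is visibly unbounded.

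The main obstacle is the inductive step. A Selmer group is a global object, so the perturbations at distinct primes of $\cP$ do not simply add --- the Poitou--Tate relations couple the choices --- and one must show that at each stage a single prime $\q\in\cP$ exists whose Frobenius in $L$ realizes the prescribed cycle types on both $E[2]$ and $A[2]$ \emph{and} which is rank-increasing/transverse for the current $E$-Selmer structure while having the intended (smaller) effect on the $A$-Selmer structure. This comes down to the non-vanishing of a Chebotarev density, i.e. to an appropriate linear disjointness of $K(E[2],A[2])$ from the auxiliary ray class fields, together with a careful running count of the Selmer coranks of both curves. The degenerate abelian case is a secondary obstacle, dispatched by the explicit $2$-isogeny descent, and it is there that the near-companion hypothesis is most transparently contradicted.
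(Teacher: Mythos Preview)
Your overall strategy---contraposition via Lemma~\ref{equivcond}, a Chebotarev-driven case analysis on the images of the mod-$2$ representations in $S_3$, and iterated single-prime twists that push the Selmer rank of one curve up while controlling the other---matches the paper's. However, the inductive step as you describe it has a genuine gap. In the paper, the primes $\q$ used always satisfy $\dimtwo E(K_\q)[2]=2$ (never $1$, so your ``cleanest case'' is not the one actually run), and the $+2$ jump in $r_2(E)$ is \emph{not} automatic from transversality of Lagrangians: when $\dimtwo H^1(K_\q,E[2])=4$ there are two ramified characters and two ramified Lagrangians, and twisting by the wrong one can give a jump of $0$ rather than $+2$. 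Proposition~\ref{crucial} resolves this by a Lagrangian counting argument (Lemma~\ref{numberoflagrangian} plus Lemma~\ref{diff}), but its hypotheses require both $\res_\q(\Sel_2(E))=0$ \emph{and} the existence of a nonzero class in $\Sel_2(E^\chi)$ restricting nontrivially at $\q$; the latter is produced from a $K$-rational $2$-torsion point of $E^\chi$ via Lemma~\ref{4tor}, which in turn needs $E[4]\subset E(K_\q)$. The paper enforces this by including $K(E[4])$ in the Chebotarev field $N$ (using Lemma~\ref{boolean} to keep $[N:K]$ a $2$-power). Your sketch never mentions $E[4]$, and without it the $+2$ step is not guaranteed. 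Separately, your appeal to ``the change in the global root number $w(E^\chi)$'' is off target: the parity input here is Kramer's local relation (Theorem~\ref{parity}), a purely Selmer-theoretic statement, not an analytic root-number computation.

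For the ``degenerate abelian'' case ($[M:K]\le2$, $[M':K]=2$, $M\neq M'$), your proposal to use explicit $2$-isogeny descent is a genuinely different route from the paper's. The paper instead runs a two-step argument (Propositions~\ref{plus22} and~\ref{22twists}): first twist so that $\Sel_2(A^\chi)$ acquires a class whose splitting field escapes $M(\sqrt{M^\times})$, then use that class as a Chebotarev target to find a further prime that forces $r_2(A)$ not to increase while $r_2(E)$ goes up by $2$. Your $2$-isogeny approach may be viable, but ``the discrepancy is visibly unbounded'' is an assertion, not an argument; controlling the $\phi$- and $\hat\phi$-Selmer pieces for $E$ and $A$ simultaneously as $\chi$ varies is precisely the delicate point, and you have not indicated how to do it.
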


We prove its contrapositive. First note that $K(E[2]) = K(A[2])$ if and only if there is a $G_K$-module isomorphism $E[2] \cong A[2]$, that is Lemma \ref{equivcond}. We prove the following theorems by applying Theorem \ref{case1}, Theorem \ref{case2}, Theorem \ref{case3} inductively (the assumptions on these theorems are only about the extensions $K(E[2])/K$ and $K(A[2])/K$, and they are invariants under quadratic twists by Remark \ref{canisom}, so we can use induction).  

\begin{introthm}
\label{1}
Suppose that $K(E[2]) \neq K(A[2])$ and $[K(E[2]):K] \le [K(A[2]):K]$. Then for any integer $d$, there exist infinitely many $\chi \in \Hom(G_K, \{\pm1\})$ such that
$$\dimtwo(\Sel_2(E^\chi/K)) - \dimtwo(\Sel_2(A^\chi/K))  > d.$$
\end{introthm}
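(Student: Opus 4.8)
The plan is to prove the contrapositive of Theorem~\ref{maintheorem} in the sharpened form stated here, working only with the quadratic division fields. By Lemma~\ref{equivcond}, the absence of a $G_K$-isomorphism $E[2]\cong A[2]$ is equivalent to $K(E[2])\neq K(A[2])$, and since $K(E^{\psi}[2])=K(E[2])$ for every quadratic $\psi$ (Remark~\ref{canisom}), all hypotheses are stable under replacing $(E,A)$ by $(E^{\psi},A^{\psi})$; so it suffices to produce, from the fixed pair $(E,A)$, quadratic characters $\chi$ with $\dim_{\F_2}\Sel_2(E^{\chi}/K)-\dim_{\F_2}\Sel_2(A^{\chi}/K)$ arbitrarily large, the freedom to twist first by an arbitrary $\psi$ being what feeds the induction. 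Note also that $K(E[2])\neq K(A[2])$ and $[K(E[2]):K]\le[K(A[2]):K]$ together forbid $K(A[2])\subseteq K(E[2])$.

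The engine is the one-prime twisting calculus for $2$-Selmer groups. Fix a quadratic $\chi$ and a prime $q\nmid 2$ of good reduction for both $E$ and $A$ at which $\chi$ is unramified, and let $\chi_q$ be a quadratic character ramified at $q$ (and, as the ray class group may force, at one controlled auxiliary prime, which I suppress). Since $\chi_q\equiv 1\pmod 2$ we have $E^{\chi\chi_q}[2]=E[2]$ as $G_K$-modules, so $\Sel_2(E^{\chi}/K)$ and $\Sel_2(E^{\chi\chi_q}/K)$ lie in the same $H^1(K,E[2])$ and differ only in the local condition at $q$, which changes from $\Hu(K_q,E[2])$ to $\im\!\bigl(E^{\chi\chi_q}(K_q)/2\to H^1(K_q,E[2])\bigr)$, both Lagrangians of dimension $\dim_{\F_2}E[2](K_q)$. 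Hence: if $\Frob_q$ acts on $E[2]$ without nonzero fixed vector, this common condition is $\{0\}$ and $\Sel_2(E^{\chi}/K)=\Sel_2(E^{\chi\chi_q}/K)$; but if $\dim_{\F_2}E[2](K_q)\in\{1,2\}$ the two Lagrangians genuinely differ, and the standard Kolyvagin--Mazur--Rubin ``add a prime'' argument --- prescribing $\Frob_q$ in a finite extension of $K(E[2])$ large enough to govern the splitting of a finite family of global classes --- lets us choose $q$ so that $\dim_{\F_2}\Sel_2(E^{\chi\chi_q}/K)=\dim_{\F_2}\Sel_2(E^{\chi}/K)+1$; the same holds with $A$ in place of $E$.

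The principal case is the one in which $A$ can be ``frozen,'' namely $3\mid[K(A[2]):K]$. Then $\Gal(K(A[2])/K)$ has an element of order $3$, and by Chebotarev in the compositum $K(E[2])\cdot K(A[2])$ (using $K(A[2])\not\subseteq K(E[2])$) there is a positive density of primes $q$, of good reduction for $E$ and $A$ with $q\nmid 2$, such that $\Frob_q|_{K(A[2])}$ has order $3$ --- so $A[2](K_q)=0$ --- while $\Frob_q|_{K(E[2])}$ runs through a prescribed coset of $\Gal(K(E[2])/K(E[2])\cap K(A[2]))$ in $\Gal(K(E[2])/K)\hookrightarrow S_3$; a short check on the position of $K(E[2])$ relative to $K(A[2])$ and on $\Gal(K(E[2])/K)$ shows that coset always meets $\{1\}\cup\{\text{transpositions}\}$, so one may also demand $\dim_{\F_2}E[2](K_q)\in\{1,2\}$. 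Twisting by such a $\chi_q$ keeps $\dim_{\F_2}\Sel_2(A^{\chi}/K)$ unchanged and, after refining the condition at $q$ inside a field finite over $K(E[2])$ that is linearly disjoint from $K(A[2])$ over their intersection (so the refinement stays compatible with $A[2](K_q)=0$), raises $\dim_{\F_2}\Sel_2(E^{\chi}/K)$ by $1$. Iterating this more than $d+\dim_{\F_2}\Sel_2(A/K)$ times --- each step consuming a new prime, of which infinitely many are available --- produces infinitely many $\chi$ as required.

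The remaining case is $[K(A[2]):K]\le 2$, hence also $[K(E[2]):K]\le 2$ with $K(E[2])\neq K(A[2])$. Now $A[2](K_q)\neq 0$ for every admissible $q$, so no single Frobenius condition freezes $A$, and one must instead arrange that the very primes raising $\dim_{\F_2}\Sel_2(E^{\chi}/K)$ do not raise $\dim_{\F_2}\Sel_2(A^{\chi}/K)$. Two things make this feasible: $K(E[2])$ is at most quadratic, so $E$ carries a $K$-rational $2$-torsion point and a $2$-isogeny $\phi\colon E\to E'$ over $K$, which makes the $\phi$- and $\hat\phi$-Selmer groups of the twists explicitly computable as twists of fixed quadratic-form data; and $K(E[2])\neq K(A[2])$ decouples, in the degree-$\le 4$ compositum $K(E[2])\cdot K(A[2])$, the Frobenius condition at $q$ that creates a new class in $\Sel_2(E^{\chi\chi_q}/K)$ from the one that would preserve a class of $\Sel_2(A^{\chi}/K)$, so $q$ can be chosen with $\dim_{\F_2}\Sel_2(A^{\chi\chi_q}/K)\le\dim_{\F_2}\Sel_2(A^{\chi}/K)$ as well. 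This is where the case split into Theorems~\ref{case1}, \ref{case2}, \ref{case3} is made --- according to the relative positions of $K(E[2])$, $K(A[2])$ and of the auxiliary quadratic field used for the decoupling --- and where the induction runs: in a stubborn configuration one first twists by an auxiliary $\psi$ (legitimate by Remark~\ref{canisom}) to land in an already-settled configuration and invokes the inductive hypothesis. I expect the main obstacle to be exactly this last point: bounding the effect of a twisting prime on $\Sel_2(A^{\chi}/K)$ when $A[2](K_q)\neq 0$, where the $A$-local condition really moves, so that the Poitou--Tate-dual constraints coming from the $A$-side must be shown not to obstruct the Frobenius condition demanded by the $E$-side. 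Verifying the requisite linear disjointness of the various auxiliary fields over $K(E[2])$, over $K(A[2])$, and over their compositum --- and that no prime is ever asked to meet incompatible conditions --- is the technical heart, and is precisely what the three case theorems isolate.
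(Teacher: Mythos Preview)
Your overall architecture---case split on whether $3\mid[K(A[2]):K]$, then iterate one-prime twists using that $K(E^\psi[2])=K(E[2])$---is exactly the paper's.  Your principal case ($3\mid[K(A[2]):K]$) is correct in spirit and corresponds to the paper's Theorems~\ref{case1} and~\ref{case2}.  Two differences worth noting: the paper works in increments of $2$ rather than $1$ (it takes $\Frob_\ell|_{K(E[2])}=1$ so that $E[2]\subset E(K_\ell)$, and then invokes Proposition~\ref{crucial}(i),(ii)); and it avoids your suppressed ``auxiliary prime'' entirely, via the class-field-theoretic construction in Proposition~\ref{case1mainlemma} and the $K(\sqrt{\O_{K,S}^\times})$-trick in the proof of Theorem~\ref{case2}, which manufactures a global $\chi$ that is literally trivial on $S$ and unramified off $\{\ell\}$.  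Your suppression of the auxiliary prime is not innocuous: an uncontrolled extra ramified prime can move the local condition for $A$ as well.

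The genuine gap is the remaining case $[K(E[2]):K]\le 2$, $[K(A[2]):K]=2$ (the paper's Section~6).  Your $2$-isogeny proposal is not what the paper does, and you have not justified the key sentence ``$K(E[2])\neq K(A[2])$ decouples \dots the Frobenius condition at $q$ that creates a new class in $\Sel_2(E^{\chi\chi_q}/K)$ from the one that would preserve a class of $\Sel_2(A^{\chi}/K)$.''  Here every admissible prime has $A[2](K_\ell)\neq 0$, so twisting at $\ell$ genuinely moves the $A$-local condition, and nothing in your sketch prevents $r_2(A)$ from rising alongside $r_2(E)$.  The paper's mechanism is different and specific.  Proposition~\ref{plus22} first twists so that $\Sel_2(A^\chi)$ contains a class $t$ whose splitting field over $M'$ is \emph{not} contained in $M(\sqrt{M^\times})$; this is forced by an explicit cocycle computation using a point $P_2\in A[2]\setminus A(K)[2]$.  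Then Proposition~\ref{22twists} exploits it: the auxiliary field $N$ governing $\res_\ell$ on $\Sel_2(E)$ (built from $K(E[4])$, $K(\sqrt{\O_{K,S}^\times})$, and the field cut out by $\Sel_2(E)$) lies inside $M(\sqrt{M^\times})$ by Lemmas~\ref{galois}, \ref{boolean}, \ref{kerkerker}, so one may choose $\ell$ with $\Frob_\ell|_{NM'}=1$ (forcing $r_2(E^\chi)=r_2(E)+2$ as in Theorem~\ref{case2}) and simultaneously $\res_\ell(t)\neq 0$; the latter, via Proposition~\ref{crucial}(iii), gives $r_2(A^\chi)\le r_2(A)$.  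This two-step ``create a Selmer class for $A$ outside $M(\sqrt{M^\times})$, then use it to pin $A$ down'' is precisely the idea your sketch is missing.
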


\begin{introthm}
\label{2}
Suppose that $K(E[2]) \neq K(A[2])$. Suppose that one of the following conditions holds.
\begin{itemize}
\item
$[K(E[2]):K]$ and $[K(A[2]):K]$ are divisible by $3$, or
\item
$[K(E[2]):K] = [K(A[2]):K] = 2$.
\end{itemize}
Then for any integer $d$, there exist infinitely many $\chi_1, \chi_2  \in \Hom(G_K, \{\pm1\})$ such that
\begin{enumerate}
\item
$\dimtwo(\Sel_2(E^{\chi_1}/K)) - \dimtwo(\Sel_2(A^{\chi_1}/K))  > d$, and
\item
$\dimtwo(\Sel_2(A^{\chi_2}/K)) - \dimtwo(\Sel_2(E^{\chi_2}/K))  > d$.
\end{enumerate}
\end{introthm}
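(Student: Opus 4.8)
The plan is to produce quadratic twists for which the $2$-Selmer rank of one curve is forced to be large while that of the other stays small, and then to interchange the roles of $E$ and $A$ to obtain the second family. The starting point is the observation that, since $K(E[2]) \ne K(A[2])$, one of the two fields is not contained in the other; say (after possibly swapping) $K(A[2]) \not\subseteq K(E[2])$. Fix a prime $\q$ of $K(E[2])$ that is inert (or at least nontrivially ramified/split in a controlled way) in $K(A[2])K(E[2])/K(E[2])$, and more importantly choose finitely many auxiliary primes $v_1,\dots,v_r$ of $K$ together with the corresponding ramified quadratic character $\chi = \prod \chi_{v_i}$ so that: (a) at each $v_i$ the local condition for $E^\chi$ becomes the ``large'' ramified local subgroup of $H^1(K_{v_i}, E[2])$, contributing positively to $\Sel_2(E^\chi/K)$; (b) at each $v_i$ the character $\chi$ is \emph{trivial} on $G_{K(A[2])_w}$ for $w \mid v_i$, or more precisely the twist does not change the structure of $A[2]$ locally, so that the $v_i$ contribute nothing net to $\dimtwo\Sel_2(A^\chi/K)$; and (c) global obstructions (the image of the global-to-local map, Shafarevich–Tate reciprocity) are accounted for. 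The key input making (a) work is that $E[2]$, viewed over $K$, has a nontrivial action of $\Gal(K(E[2])/K)$, and under the hypothesis $3 \mid [K(E[2]):K]$ or $[K(E[2]):K]=2$ one can find primes $v$ where $E[2]$ has a prescribed local Galois structure (this is exactly a Chebotarev condition on $\Frob_v$ in $K(E[2])/K$), at which the ``raising the Selmer rank by twisting'' mechanism of Mazur–Rubin and Kolyvagin's system machinery applies.

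The main steps, in order, are: \textbf{(1)} Normalize so that $K(A[2]) \not\subseteq K(E[2])$ (one case) and separately $K(E[2]) \not\subseteq K(A[2])$ (the symmetric case); the hypotheses on the degrees being symmetric in $E$ and $A$, it suffices to treat one inclusion failure and then quote it with the roles reversed. \textbf{(2)} For a large parameter $t$, use Chebotarev in the compositum $K(E[2], A[2], \mu_2, \dots)$ to select $t$ primes $v_1, \dots, v_t$ of $K$ such that $\Frob_{v_i}$ lies in a conjugacy class that (i) makes $E[2]$ have the ``useful'' local type at $v_i$ (nonsplit multiplicative-type reduction of $E[2]$, i.e. $\dimtwo E^\chi(K_{v_i})/2 = 1$ with the ramified twist) and (ii) makes $A[2]$ locally trivial as a $G_{K_{v_i}}$-module after the twist, so that $\dimtwo\Sel_2(A^{\chi}/K)$ is unchanged at $v_i$; the fact that such a class exists and has positive density is where $K(A[2]) \not\subseteq K(E[2])$ together with the divisibility-by-$3$ or degree-$2$ hypothesis is used. \textbf{(3)} Set $\chi = \prod_{i=1}^t \chi_{v_i}$ where $\chi_{v_i}$ is ramified exactly at $v_i$; invoke \cite[Theorem 2.3.4]{kolyvagin} (the comparison-of-Selmer-groups inequality cited in the Introduction) in both directions to get $\dimtwo\Sel_2(E^\chi/K) \ge \dimtwo\Sel_2(A^\chi/K) + t - c$ for a constant $c = c(E,A,K)$ independent of $t$, using that the local conditions agree away from $\{v_1,\dots,v_t\}\cup S_{(E,A,K)}$ and that each $v_i$ contributes $+1$ to one side and $0$ to the other. \textbf{(4)} Take $t > d + c$; the infinitude of such $\chi$ follows because at each stage Chebotarev provides infinitely many admissible prime tuples, giving infinitely many distinct $\chi$. \textbf{(5)} Reverse the roles of $E$ and $A$ to get the family $\chi_2$.

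The main obstacle is step (2)–(3): controlling the \emph{net} effect on $\Sel_2(A^\chi/K)$ while forcing growth on $\Sel_2(E^\chi/K)$. It is not enough that $A[2]$ be locally trivial at the $v_i$; one must ensure the global Selmer group of $A^\chi$ does not itself grow (or grows by a bounded amount) under the twist, which requires tracking the image of the global-to-local restriction map and the Poitou–Tate/Greenberg–Wiles self-duality of the Selmer structure — in particular showing that the new local conditions at the $v_i$ are ``neutral'' for $A$ in the sense that they neither force new classes into nor remove classes from $\Sel_2(A^\chi/K)$ beyond a bounded error. This is precisely the point where the two structural hypotheses ($3 \mid [K(E[2]):K], [K(A[2]):K]$, or both degrees equal to $2$) are needed: in the degree-$2$ case $E[2]$ has a rational $2$-torsion point structure that makes the local twisting explicit, and in the ``divisible by $3$'' case the Galois group $\Gal(K(E[2])/K)$ contains an element of order $3$ acting on $E[2]\cong\Ftwo^2$ as $\GL_2(\Ftwo)\cong S_3$, which pins down the local types at $\Frob_v$ sharply enough to separate the two curves. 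Handling the interaction of these local choices with the global duality constraint — so that the inequality in step (3) genuinely goes one way with only a bounded loss — is the crux; once it is in place, steps (4) and (5) are routine and the contrapositive of Theorem \ref{maintheorem} follows by combining Theorems \ref{1} and \ref{2} with the case analysis indicated after Theorem \ref{maintheorem}.
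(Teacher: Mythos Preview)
Your overall strategy---choose primes by Chebotarev at which the local Galois structures of $E[2]$ and $A[2]$ differ, and twist there---points in the right direction, but the proposal has two genuine gaps that the paper's argument resolves by a different route.

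First, twisting simultaneously at $t$ primes and asserting $\dimtwo\Sel_2(E^\chi/K) \ge \dimtwo\Sel_2(A^\chi/K) + t - c$ is not justified. The comparison theorem you cite bounds only the \emph{magnitude} of a Selmer-rank change in terms of local data; it does not give one-sided growth. Whether $r_2(E^\chi)$ actually increases at a prime $\l$ depends on the image $\res_\l(\Sel_2(E))$, which is global, and the effects at different primes interact. The paper instead proves a \emph{single-twist} step (Theorems~\ref{case1} and~\ref{case3}): there exists one $\chi$ with $r_2(E^\chi) - r_2(A^\chi) \ge r_2(E) - r_2(A) + 2$. Since $K(E^\chi[2]) = K(E[2])$ and $K(A^\chi[2]) = K(A[2])$ by Remark~\ref{canisom}, the hypotheses are twist-invariant, so one applies this step \emph{inductively}, replacing $(E,A)$ by $(E^\chi,A^\chi)$ each time. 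At each stage the new prime $\l$ must be chosen so that $\res_\l$ annihilates the Selmer group of the \emph{already-twisted} curve (this is the role of the auxiliary field $N$ in the proofs of Theorems~\ref{case1} and~\ref{case2} and the use of Proposition~\ref{crucial}(i),(ii)); this is exactly why the primes cannot all be selected in advance.

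Second, in the case $[M:K] = [M':K] = 2$ your step (2)(ii) asks for primes where $A[2]$ is ``locally trivial''. But $A$ has a $K$-rational point of order~$2$, so $\dimtwo A(K_v)[2] \ge 1$ at every place, and a ramified twist at $v$ genuinely moves $\beta_{A,v}$; there is no Frobenius class making $A(K_\l)[2]=0$. The paper's treatment of this case (Section~6) is substantially more delicate: to keep $r_2(A^\chi)$ from growing one needs $\l$ with $\res_\l(t) \ne 0$ for some $t \in \Sel_2(A)$, so that Proposition~\ref{crucial}(iii) forces $r_2(A^\chi) \le r_2(A)$. This in turn requires a $t$ whose splitting field over $M'$ is not contained in $M(\sqrt{M^\times})$ (Proposition~\ref{22twists}); if no such $t$ exists, a preliminary twist (Proposition~\ref{plus22}) is used to manufacture one. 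None of this mechanism appears in your proposal, and without it nothing prevents $r_2(A^\chi)$ from rising in lockstep with $r_2(E^\chi)$.
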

Let $C$ be an elliptic curve over $K$ and let $r_{C/K}$ denote the Mordell-Weil rank of $C$ over $K$. Then the following exact sequence
$$
0 \to C(K)/2C(K) \to \Sel_2(C/K) \to \Sh_{C/K}[2] \to 0,
$$
where $\Sh_{C/K}$ denotes the Shafarevich-Tate group of $C$ over $K$, together with the above theorems imply the following Corollaries. 

\begin{introcor}
Suppose that the hypotheses in Theorem \ref{1} hold. Then for any integer $d$, there exist infinitely many $\chi \in \Hom(G_K, \{\pm1\})$ such that
$$r_{E^\chi/K} - r_{A^\chi/K} > d, \text{ or } \dimtwo(\Sh_{E^\chi/K}[2]) - \dimtwo(\Sh_{A^\chi/K}[2]) > d.$$
\end{introcor}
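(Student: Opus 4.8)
The plan is to derive the Corollary purely formally from Theorem~\ref{1}, feeding the displayed exact sequence
$$0 \to C(K)/2C(K) \to \Sel_2(C/K) \to \Sh_{C/K}[2] \to 0$$
into a comparison of $\Ftwo$-dimensions, with $C$ ranging over $E^\chi$ and $A^\chi$. The hypotheses of Theorem~\ref{1} are, word for word, the hypotheses assumed in the Corollary, so the only content is elementary linear algebra over $\Ftwo$. First I would record the identity $\dimtwo\!\big(C(K)/2C(K)\big) = r_{C/K} + \dimtwo\!\big(C(K)[2]\big)$, which follows from $C(K)/2C(K) \cong (\Z/2\Z)^{r_{C/K}} \oplus \big(C(K)_{\tors}/2C(K)_{\tors}\big)$ together with the fact that a finite abelian group $T$ satisfies $\dimtwo(T/2T) = \dimtwo(T[2])$. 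Combined with exactness this gives
$$\dimtwo\!\big(\Sel_2(C/K)\big) = r_{C/K} + \dimtwo\!\big(C(K)[2]\big) + \dimtwo\!\big(\Sh_{C/K}[2]\big),$$
and since $\dimtwo(C(K)[2]) \le 2$ one obtains the two-sided bound
$$r_{C/K} + \dimtwo\!\big(\Sh_{C/K}[2]\big) \;\le\; \dimtwo\!\big(\Sel_2(C/K)\big) \;\le\; r_{C/K} + \dimtwo\!\big(\Sh_{C/K}[2]\big) + 2.$$

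Next I would apply Theorem~\ref{1} with $2d+2$ in place of $d$: there exist infinitely many $\chi \in \Hom(G_K,\{\pm1\})$ with $\dimtwo(\Sel_2(E^\chi/K)) - \dimtwo(\Sel_2(A^\chi/K)) > 2d+2$. For any such $\chi$, apply the upper bound above to $C = E^\chi$ and the lower bound to $C = A^\chi$ and subtract, obtaining
$$\big(r_{E^\chi/K} - r_{A^\chi/K}\big) + \big(\dimtwo(\Sh_{E^\chi/K}[2]) - \dimtwo(\Sh_{A^\chi/K}[2])\big) \;\ge\; \dimtwo(\Sel_2(E^\chi/K)) - \dimtwo(\Sel_2(A^\chi/K)) - 2 \;>\; 2d.$$
Hence at least one of the two summands on the left exceeds $d$, which is exactly the alternative in the Corollary; and since there are infinitely many such $\chi$ and only two possibilities, a pigeonhole argument even shows that one of the two displayed inequalities holds for an infinite family.

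I do not anticipate any real obstacle: once Theorem~\ref{1} is available, the argument is mechanical. The only point that needs care is bookkeeping of constants — the additive $2$ coming from $\dimtwo(C(K)[2]) \le 2$, and the factor $2$ needed to pass from ``the sum of the two differences is $> 2d$'' to ``one of the differences is $> d$'' — which is why Theorem~\ref{1} must be invoked with $2d+2$ rather than with $d$ itself.
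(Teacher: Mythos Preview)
Your argument is correct and is exactly the approach the paper intends: the paper does not spell out a proof but simply says the Corollary follows from Theorem~\ref{1} together with the exact sequence $0 \to C(K)/2C(K) \to \Sel_2(C/K) \to \Sh_{C/K}[2] \to 0$, and your derivation is precisely the routine $\Ftwo$-dimension count (with the $\dimtwo C(K)[2]\le 2$ bookkeeping) that makes this implication explicit.
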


\begin{introcor}
Suppose that the hypotheses in Theorem \ref{2} hold. Then for any integer $d$, there exist infinitely many $\chi_1, \chi_2  \in \Hom(G_K, \{\pm1\})$ such that
\begin{enumerate}
\item
$r_{E^{\chi_1}/K} - r_{A^{\chi_1}/K} > d, \text{ or } \dimtwo(\Sh_{E^{\chi_1}/K}[2]) - \dimtwo(\Sh_{A^{\chi_1}/K}[2]) > d$, 
\item
$r_{A^{\chi_2}/K} - r_{E^{\chi_2}/K} > d, \text{ or } \dimtwo(\Sh_{A^{\chi_2}/K}[2]) - \dimtwo(\Sh_{E^{\chi_2}/K}[2]) > d$.
\end{enumerate}
\end{introcor}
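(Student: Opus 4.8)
The plan is to derive this corollary formally from Theorem \ref{2}, together with the fundamental exact sequence
$$
0 \to C(K)/2C(K) \to \Sel_2(C/K) \to \Sh_{C/K}[2] \to 0
$$
displayed just above. Taking $\Ftwo$-dimensions and using that $\dimtwo(M/2M) = \rk M + \dimtwo(M[2])$ for every finitely generated abelian group $M$, one obtains, for any elliptic curve $C$ over $K$,
$$
\dimtwo(\Sel_2(C/K)) = r_{C/K} + \dimtwo(C(K)[2]) + \dimtwo(\Sh_{C/K}[2]),
$$
where $0 \le \dimtwo(C(K)[2]) \le 2$ because $C(K)[2] \hookrightarrow C(\overline{K})[2] \cong (\Z/2\Z)^2$. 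Thus, for any two elliptic curves over $K$, the difference of their $2$-Selmer dimensions differs by at most $2$ from the sum of the difference of their Mordell--Weil ranks and the difference of the $\Ftwo$-dimensions of the $2$-torsion of their Shafarevich--Tate groups.

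The steps are then as follows. First, fix an integer $d$ and apply Theorem \ref{2} with the constant $2d+2$ in place of $d$; this yields infinitely many $\chi_1 \in \Hom(G_K, \{\pm 1\})$ with $\dimtwo(\Sel_2(E^{\chi_1}/K)) - \dimtwo(\Sel_2(A^{\chi_1}/K)) > 2d+2$, and symmetrically infinitely many $\chi_2$ with $\dimtwo(\Sel_2(A^{\chi_2}/K)) - \dimtwo(\Sel_2(E^{\chi_2}/K)) > 2d+2$. Second, for each such $\chi_1$, subtracting the displayed identity for $C = A^{\chi_1}$ from the one for $C = E^{\chi_1}$ and using $|\dimtwo(E^{\chi_1}(K)[2]) - \dimtwo(A^{\chi_1}(K)[2])| \le 2$ gives
$$
\bigl(r_{E^{\chi_1}/K} - r_{A^{\chi_1}/K}\bigr) + \bigl(\dimtwo(\Sh_{E^{\chi_1}/K}[2]) - \dimtwo(\Sh_{A^{\chi_1}/K}[2])\bigr) > 2d.
$$
Third, conclude that at least one of the two summands exceeds $d$, which is statement (i); statement (ii) follows identically using the $\chi_2$'s. (The first corollary above is proved in exactly the same way, starting from Theorem \ref{1}.)

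I do not anticipate a genuine obstacle: the argument is bookkeeping around the exact sequence, and the only subtlety is the loss of at most $2$ coming from rational $2$-torsion, which is exactly why Theorem \ref{2} must be invoked with $2d+2$ rather than with $d$ itself. All the substantive work lies in Theorems \ref{1} and \ref{2}.
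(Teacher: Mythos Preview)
Your proposal is correct and follows exactly the approach the paper indicates: the paper merely states that the exact sequence $0 \to C(K)/2C(K) \to \Sel_2(C/K) \to \Sh_{C/K}[2] \to 0$ together with Theorem~\ref{2} implies the corollary, and you have supplied the straightforward bookkeeping (in particular, absorbing the bounded contribution from rational $2$-torsion by invoking Theorem~\ref{2} with $2d+2$). There is nothing to add.
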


When $E$ and $A$ are elliptic curves over a number field $K$, and are isogeneous over $K$ of degree $n$, where $n$ is coprime to $2$,  it is easy to see that $E$ and $A$ are $2$-Selmer companion curves over $K$ since the isogeny induces the isomorphism $\Sel_2(E^\chi/K) \cong \Sel_2(A^\chi/K)$ for any $\chi \in \Hom(G_K, \{\pm1\})$. It is interesting to note that when $E$ and $A$ are isogenous over $K$ of degree $2$, it is possible that $E$ and $A$ are not even $2$-Selmer near-companion curves over $K$. A large family of exampes of such $E$ and $A$ is given by Theorem \ref{1}. 

\begin{introcor}
Let $E$ and $A$ be elliptic curves over a number field $K$. Suppose that $E$ and $A$ are isogenous over $K$ of degree $2$. Suppose further $K(E[2]) \neq K(A[2])$, then $E$ and $A$ are not $2$-Selmer near-companion curves over $K$.   
\end{introcor}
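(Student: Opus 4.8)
The plan is to read this off directly from Theorem~\ref{1}; the hypothesis that $E$ and $A$ are isogenous of degree $2$ plays no role in the argument and is present only to contrast with the odd-degree case discussed just above, where such curves are always $2$-Selmer companions. The only input I would use is the standing assumption $K(E[2]) \neq K(A[2])$.

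First I would reduce to the situation literally covered by Theorem~\ref{1}. Both the property of being $2$-Selmer near-companion curves and the condition $K(E[2]) \neq K(A[2])$ are symmetric in $E$ and $A$, so after interchanging $E$ and $A$ if necessary we may assume $[K(E[2]):K] \le [K(A[2]):K]$. Then the hypotheses of Theorem~\ref{1} hold for the pair $(E,A)$.

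Now suppose, for contradiction, that $E$ and $A$ are $2$-Selmer near-companion curves over $K$, and let $C = C(E,A,K)$ be a constant as in the definition, so that $|\dimtwo(\Sel_2(E^\chi/K)) - \dimtwo(\Sel_2(A^\chi/K))| < C$ for every $\chi \in \Hom(G_K,\{\pm1\})$. Applying Theorem~\ref{1} with $d := C$ produces (infinitely many, though one suffices) $\chi \in \Hom(G_K,\{\pm1\})$ with $\dimtwo(\Sel_2(E^\chi/K)) - \dimtwo(\Sel_2(A^\chi/K)) > C$, contradicting the previous inequality. Hence no such constant $C$ exists, i.e.\ $E$ and $A$ are not $2$-Selmer near-companion curves over $K$.

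There is no genuine obstacle once Theorem~\ref{1} is available: the corollary is a formal consequence. The only point worth a remark is that the hypothesis $K(E[2]) \neq K(A[2])$ is genuinely satisfiable for pairs of curves $2$-isogenous over $K$ (it is not forced to fail by the existence of a $2$-isogeny), which is what makes the corollary non-vacuous; exhibiting an explicit such pair is routine and logically independent of the main argument.
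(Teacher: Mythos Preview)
Your proposal is correct and matches the paper's own justification: the paper states the corollary immediately after noting that ``A large family of examples of such $E$ and $A$ is given by Theorem~\ref{1},'' and gives no further argument. Your reduction by symmetry to the case $[K(E[2]):K] \le [K(A[2]):K]$ and the contradiction with $d := C$ (or $d := \lceil C \rceil$ if one insists on an integer) is exactly the intended reading; your observation that the $2$-isogeny hypothesis is logically idle is also correct.
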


Due to lack of the tools to analyze the behavior of the $p$-Selmer ranks in the family of quadratic twists for prime numbers $p$ other than $2$, we focus on the case $p=2$ in the present article. The point is that when $p=2$, for any $\chi \in \Hom(G_K, \{\pm 1\})$, there is a canonical $G_K$-module isomorphism between $E[2]$ and $E^\chi[2]$, so we can compare $\Sel_2(E/K)$ and $\Sel_2(E^\chi/K)$ in the same cohomology group $H^1(K, E[2])$. Moreover, local conditions of $\Sel_2(E/K)$ and $\Sel_2(E^\chi/K)$ at a place $v$ are in the same cohomology group $H^1(K_v, E[2])$, so we can compare the local conditions. In fact, the main idea is to choose $\chi$ so that the local conditions of $\Sel_2(E/K)$ and $\Sel_2(E^\chi/K)$ are the same everywhere except one place, where we get a rank variation in a controllable way.

However for an odd prime $p$, there is no canonical $G_K$-module isomorphism between $E[p]$ and $E^\chi[p]$, so it is rather difficult to compute $\dimp(\Sel_p(E^\chi/K))$ from the information of  $\dimp(\Sel_p(E/K))$ (local conditions of $\Sel_p(E/K)$ and $\Sel_p(E^\chi/K)$ are not comparable).

We write $M, M'$ for $K(E[2]), K(A[2])$, respectively. In the rest of the paper, we suppose $M \neq M'$ for our purpose. In section $3$, we define metabolic spaces and Lagrangian subspaces. With these notions, we prove Proposition \ref{crucial}, which plays an important role to show our main theorem. We prove the main theorem by case-by-case. In Section $4$, we deal with the case $[M:K], [M':K]$ are divisible by $3$. For the rest of cases, we assume $[M:K] \le [M':K]$ by symmetry. In Section $5$, we take care of the case $[M:K] = 1$ or $2$, and $[M':K]$ is divisible by 3. Section $6$ will be devoted to the last case: $[M:K] = 1$ or $2$, and $[M':K] = 2$.


\section{Preliminaries}
We fix a number field $K$.  We write $K_v$ for the completion of $K$ at a place $v$. In addition, we fix an embedding $\overline{K} \to \overline{K_v}$ for every place $v$ of $K$, so that $G_{K_v} \subset G_K$, where $G_{K_v}$, $G_K$ denote the absolute Galois groups of $K_v$, $K$, respectively. In this section, let $C$ denote an elliptic curve over $K$.

\begin{defn}
Let $L$ be a field of characteristic $0$. We write
$$
\Xset(L) := \Hom(G_L, \{\pm1\}).
$$
If $L$ is a local field, we often identify $\Xset(L)$ with $\Hom(L^\times, \{\pm1\})$ via the local reciprocity map, and let $\Xset_\ram(L) \subset \Xset(L)$ be the subset of ramified characters in $\Xset(L)$ (by local class field theory, $\chi \in \Xset_\ram(L)$ if and only if $\chi(\O_L^\times) \neq 1$, where $\O_L^\times$ is the unit group of $L$). 
\end{defn}

\begin{rem}
\label{canisom}
For any $\chi \in \Xset(K)$, note that there is a canonical $G_K$-module isomorphism
$$
C[2] \cong C^\chi[2],
$$
which is the restriction of the canonical isomorphism $\phi: C \to C^\chi$, where $\phi^\sigma(P) = \chi(\sigma)\phi(P)$ for $P \in C(\overline{K})$ and $\sigma \in G_K$. Indeed, if $P \in C[2]$, then for any $\sigma \in G_K$
$$
\phi^\sigma(P) = \chi(\sigma)\phi(P) = \pm \phi(P) = \phi(P).
$$
The same is true if $K$ is replaced by $K_v$. 
\end{rem}

\begin{lem}
\label{locdim2torsion}
Let $\l\nmid 2$ be a prime of $K$. Then 
$$
\dimtwo(C(K_\l)/2C(K_\l)) = \dimtwo(C(K_\l)[2]).
$$
\end{lem}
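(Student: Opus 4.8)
The statement asserts that over a non-archimedean local field $K_\l$ with $\l \nmid 2$, the groups $C(K_\l)/2C(K_\l)$ and $C(K_\l)[2]$ have the same $\Ftwo$-dimension. The plan is to exploit the standard filtration of the Mordell--Weil group $C(K_\l)$ by the formal group together with a counting argument. Concretely, I would consider the exact sequence
$$
0 \to C(K_\l)[2] \to C(K_\l) \map{2} C(K_\l) \to C(K_\l)/2C(K_\l) \to 0,
$$
from which $\dimtwo(C(K_\l)/2C(K_\l)) - \dimtwo(C(K_\l)[2]) = \dimtwo(\coker) - \dimtwo(\ker)$ equals the ``defect'' of multiplication by $2$ on $C(K_\l)$, so it suffices to show that this defect vanishes, i.e. that the index $[C(K_\l):2C(K_\l)]$ equals $|C(K_\l)[2]|$.

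The key structural input is that $C(K_\l)$ has a finite-index subgroup isomorphic to the additive group $\O_{K_\l}$ (the kernel of reduction, via the formal group logarithm, since the residue characteristic is odd so there is no torsion obstruction), and $\O_{K_\l} \cong \Z_\l^{[K_\l:\Q_\l]}$ as a $\Z_\l$-module, which is $2$-divisible because $\l \neq 2$. Hence $C(K_\l)$ is (topologically) isomorphic to $\Z_\l^{[K_\l:\Q_\l]} \times T$ for a finite group $T$, and multiplication by $2$ is an automorphism of the $\Z_\l^{[K_\l:\Q_\l]}$ factor. Therefore both $\ker$ and $\coker$ of multiplication by $2$ on $C(K_\l)$ coincide with those of multiplication by $2$ on the finite group $T$; for any finite abelian group the kernel and cokernel of multiplication by any fixed integer have the same order, so $\dimtwo(C(K_\l)[2]) = \dimtwo(T[2]) = \dimtwo(T/2T) = \dimtwo(C(K_\l)/2C(K_\l))$, as desired.

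I do not anticipate a serious obstacle here; the only mild care needed is in justifying that the formal group gives a genuine subgroup isomorphic to $\O_{K_\l}$ (rather than merely $\Z_\l$-submodule of finite index with some $\l$-torsion) — this is exactly where the hypothesis $\l \nmid 2$ is used, since for residue characteristic $p$ the formal group $\widehat{C}(\m_{K_\l})$ is isomorphic to $\widehat{\mathbf{G}_a}(\m_{K_\l}) \cong \O_{K_\l}$ whenever we pass to a small enough subgroup, and $2$ is invertible there. Alternatively, one can phrase the whole argument through the exact sequence of the reduction map $0 \to \widehat{C}(\m_{K_\l}) \to C(K_\l) \to \widetilde{C}(\mathbf{F}_\l) \to 0$, note that $\widehat{C}(\m_{K_\l})$ is uniquely $2$-divisible with no $2$-torsion, and conclude that multiplication by $2$ on $C(K_\l)$ has the same kernel and cokernel size as on the finite group $\widetilde{C}(\mathbf{F}_\l)$, where equality of the two is again the elementary fact about finite abelian groups.
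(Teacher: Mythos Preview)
Your argument is correct and is essentially the same as the paper's, just unpacked in more detail: the paper simply notes that the pro-$p$ part of $C(K_\l)$ is $2$-divisible for every odd $p$ (in particular for the residue characteristic), deduces $C(K_\l)/2C(K_\l)\cong C(K_\l)[2^\infty]/2C(K_\l)[2^\infty]$, and then uses the finite-group fact you state. Your formal-group/structure-theorem justification is exactly what underlies the paper's one-line claim about the pro-$p$ part.
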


\begin{proof}
For any odd prime $p$, the pro-$p$ part of $C(K_\l)$ is $2$-divisible. Therefore, there is an isomorphism 
$$
C(K_\l)/2C(K_\l) \cong  C(K_\l)[2^\infty]/2C(K_\l)[2^\infty],
$$
so the lemma follows.
\end{proof}

\begin{thm}
\label{tlp}
The Tate local duality and the Weil pairing induce a nondegenerate pairing
\begin{equation}
\label{tld}
\langle  \text{ , } \rangle_v : H^1(K_v, C[2]) \times H^1(K_v, C[2]) \too H^2(K_v, \{\pm1\}),
\end{equation}
where $H^2(K_v, \{\pm1\}) \cong \Ftwo$ unless $v$ is a complex place.
\end{thm}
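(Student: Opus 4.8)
The plan is to derive the statement from Tate's local duality theorem, the only additional input being the self-duality of $C[2]$ provided by the Weil pairing. First I would recall that the Weil pairing
$$
e_2 \colon C[2] \times C[2] \too \{\pm 1\}
$$
is a nondegenerate, alternating, $G_{K_v}$-equivariant bilinear pairing, the target carrying the trivial Galois action since $\{\pm 1\} = \bmu_2 \subset K_v$. Equivalently, $e_2$ induces an isomorphism of $G_{K_v}$-modules $C[2] \isom \Hom(C[2], \{\pm1\})$; i.e.\ the finite $G_{K_v}$-module $C[2]$ is canonically identified with its own Cartier dual.

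Next I would invoke Tate local duality: for a finite $G_{K_v}$-module $M$ of exponent $n$ with Cartier dual $M^D := \Hom(M, \bmu_n)$, the cup product
$$
H^i(K_v, M) \times H^{2-i}(K_v, M^D) \too H^2(K_v, \bmu_n)
$$
is a perfect pairing of finite abelian groups for every $i$ (at archimedean $v$ one replaces ordinary cohomology by the modified Tate groups, which however coincide with the ordinary ones in the single degree $i=1$ relevant here). Taking $M = C[2]$, $n = 2$, $i = 1$ and precomposing the second variable with the self-duality isomorphism coming from $e_2$, one obtains exactly the pairing $\langle\ ,\ \rangle_v$ of \eqref{tld}; it is nondegenerate, and it is moreover symmetric because the cup product on $H^1$ is graded-commutative while $e_2$ is alternating.

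Finally I would identify the target group. The Kummer sequence $1 \to \{\pm1\} \to \overline{K_v}^\times \xrightarrow{\,2\,} \overline{K_v}^\times \to 1$ together with Hilbert's Theorem $90$ gives $H^2(K_v, \{\pm1\}) \cong H^2(K_v, \overline{K_v}^\times)[2] = \mathrm{Br}(K_v)[2]$. For nonarchimedean $v$, local class field theory identifies $\mathrm{Br}(K_v) \cong \Q/\Z$, so $\mathrm{Br}(K_v)[2] \cong \tfrac12\Z/\Z \cong \Ftwo$; for real $v$, $\mathrm{Br}(\R) \cong \tfrac12\Z/\Z \cong \Ftwo$ as well; and for complex $v$, $\mathrm{Br}(\C) = 0$. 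This gives the stated description of $H^2(K_v, \{\pm1\})$ and finishes the proof. I do not expect any serious obstacle: the argument is entirely classical, and the only points needing a little care are the behaviour at the archimedean places and the verification that $e_2$ is perfect (so that $C[2] \to \Hom(C[2], \{\pm1\})$ is an isomorphism rather than merely a monomorphism).
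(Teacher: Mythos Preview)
Your argument is correct; the paper does not give a proof at all but simply refers to \cite[Theorem 7.2.6]{cohomology}, so your proposal in fact supplies the details that the cited reference contains. Nothing in your write-up is problematic, and the extra remarks on symmetry and on the archimedean case are accurate bonuses beyond what the statement requires.
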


\begin{proof}
For example, see \cite[Theorem 7.2.6]{cohomology}.
\end{proof}

\begin{lem}
\label{equivcond}
Let $E, A$ be elliptic curves over $K$. Then $K(E[2]) = K(A[2])$ if and only if there exists a $G_K$-module isomorphism $E[2] \cong A[2]$
\end{lem}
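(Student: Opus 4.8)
The plan is to restate the claim in terms of mod-$2$ Galois representations and settle it by elementary group theory, using the exceptional isomorphism $\GL_2(\Ftwo)\cong S_3$. The forward direction is immediate: a $G_K$-module isomorphism $E[2]\cong A[2]$ forces the representations $\rho_E\colon G_K\to\Aut(E[2])$ and $\rho_A\colon G_K\to\Aut(A[2])$ to have the same kernel, and since $K(E[2])$ is exactly the fixed field of $\ker\rho_E$ (an element of $G_K$ fixes $K(E[2])$ iff it acts trivially on $E[2]$), and likewise for $A$, we get $K(E[2])=K(A[2])$.

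For the converse, set $M=K(E[2])=K(A[2])$ and $G=\Gal(M/K)$. Both $\rho_E$ and $\rho_A$ factor through $G$, and the factored maps are injective because $M$ equals both $K(E[2])$ and $K(A[2])$; after choosing $\Ftwo$-bases of $E[2]$ and $A[2]$ we obtain two injections $\overline\rho_E,\overline\rho_A\colon G\hookrightarrow\GL_2(\Ftwo)$. It is enough to produce $g\in\GL_2(\Ftwo)$ with $\overline\rho_A=g\,\overline\rho_E\,g^{-1}$: such a $g$, read through the chosen bases as a linear isomorphism $E[2]\to A[2]$, intertwines the two $G$-actions and is therefore a $G_K$-module isomorphism. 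Thus the whole statement reduces to the assertion that any two injective homomorphisms from a finite group into $\GL_2(\Ftwo)$ are conjugate.

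I would prove this by inspection after identifying $\GL_2(\Ftwo)$ with $S_3$. The images $\overline\rho_E(G)$ and $\overline\rho_A(G)$ have the same order, and in $S_3$ any two subgroups of a given order (the possibilities being $1,2,3,6$) are conjugate; so, replacing $\overline\rho_E$ by a conjugate, I may assume both images equal a common subgroup $H\le S_3$. Then $\overline\rho_A\circ\overline\rho_E^{-1}$ is an automorphism of $H$, and it remains to see that every automorphism of $H$ is conjugation by an element of $N_{S_3}(H)$: this is trivial for $|H|\le 2$, it holds for the order-$3$ subgroup because $N_{S_3}(H)/C_{S_3}(H)\cong\Aut(\Z/3)$ (inversion being conjugation by any transposition), and for $H=S_3$ it is the completeness of $S_3$. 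Composing the two conjugations gives the required $g$.

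The argument is elementary and I do not expect a genuine obstacle; the only points needing a little care are the direction of the intertwining relation and the remark that pre-conjugating $\overline\rho_E$ leaves the statement to be proved unchanged.
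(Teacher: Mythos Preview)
Your proof is correct and follows essentially the same approach as the paper: both reduce the converse to the assertion that any two injective homomorphisms from a group into $\GL_2(\Ftwo)\cong S_3$ are conjugate (equivalently, any isomorphism between subgroups of $S_3$ is the restriction of an inner automorphism), which the paper cites as elementary group theory and you verify case by case. The forward direction is identical in both.
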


\begin{proof}
If there is a $G_K$-module isomorphism $E[2] \cong A[2]$, then
$$
\{ \sigma \in G_K : \sigma P = P \text{ for every } P \in E[2]\} = \{ \sigma \in G_K : \sigma P' = P' \text{ for every } P' \in A[2]\},
$$
so $K(E[2]) = K(A[2])$ (the fixed field of the group above). Now we prove the converse. Fixing bases of $E[2], A[2]$ over $\Ftwo$, one can define injective maps $\phi : \Gal(K(E[2])/K) \hookrightarrow \Gl_2(\Ftwo)$ and $\phi' : \Gal(K(A[2])/K) \hookrightarrow \Gl_2(\Ftwo)$. Elementary group theory proves that any (group) isomorphism between two (possibly the same) subgroups of  $\Gl_2(\Ftwo)$ is given by the restriction of an inner automorphism of  $\Gl_2(\Ftwo)$ ($\cong S_3$). Therefore after an appropriate change-of-basis, we can identify the maps $\phi$ and  $\phi'$, where we derive a $G_K$-module isomorphism $E[2] \cong A[2]$.
\end{proof}

The following lemmas will be used frequently in later sections. 

\begin{lem}
\label{cft}
Let $S$ be a (finite) set of places of $K$ such that $\Pic(\O_{K,S}) =1$, where $\O_{K, S}$ denotes the ring of $S$-integers of $K$. The image of the restriction map
\begin{align*}
\Xset(K) = \Hom(\iK/K^\times,\{\pm1\}) &= \textstyle\Hom((\prod_{\mu \in S}K_{\mu}^\times \times \prod_{\nu\notin S}\O_{\nu}^\times)/\O_{K, S}^\times,\{\pm1\}) \\
& \too \textstyle{\prod_{\mu \in S}\Hom(K_{\mu}^\times, \{\pm1\})\times \prod_{\nu \not\in S}\Hom(\O_{\nu}^\times, \{\pm1\})}
\end{align*}
is the set of all $((f_{\mu})_{\mu \in S}, (g_{\nu})_{\nu \not\in S})$ such that $\prod_{\mu \in S} f_{\mu}(b) \prod_{\nu \not\in S} g_{\nu}(b) = 1$ for all $b \in \O_{K, S}^\times$, where $f_{\mu} \in \Hom(K_{\mu}^\times, \{\pm1\})$ and $g_{\nu}\in \Hom(\O_{\nu}^\times, \{\pm1\})$.
\end{lem}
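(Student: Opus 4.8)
The plan is to unwind the two identities in the displayed chain and then translate ``character of a quotient of $U_S$'' into the asserted condition on tuples. Throughout, $\Hom$ means \emph{continuous} homomorphisms, and I write $U_S$ for the open subgroup $\prod_{\mu\in S}K_\mu^\times\times\prod_{\nu\notin S}\O_\nu^\times$ of $\iK$.

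First I would justify the first equality by global class field theory. The Artin reciprocity map $\iK/K^\times\to G_K^{\ab}$ realizes $G_K^{\ab}$ as the profinite completion of $\iK/K^\times$; since $\{\pm1\}$ is finite and discrete, any continuous character of $\iK/K^\times$ kills the connected component of the identity and hence factors through $G_K^{\ab}$, so reciprocity induces a natural bijection $\Hom(G_K,\{\pm1\})=\Hom(G_K^{\ab},\{\pm1\})\isom\Hom(\iK/K^\times,\{\pm1\})=\Xset(K)$. For the second equality, the hypothesis $\Pic(\O_{K,S})=1$ is precisely the vanishing of the $S$-ideal class group $\iK/(U_SK^\times)$, i.e. $\iK=U_S\cdot K^\times$; combined with $U_S\cap K^\times=\O_{K,S}^\times$ (the group of $S$-units), the inclusion $U_S\hookrightarrow\iK$ induces an isomorphism $U_S/\O_{K,S}^\times\isom\iK/K^\times$, which is a homeomorphism because $U_S$ is open in $\iK$. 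Consequently $\Xset(K)$ is identified with the group of continuous characters of $U_S$ that vanish on $\O_{K,S}^\times$.

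It then remains to describe $\Hom(U_S,\{\pm1\})$ and impose that vanishing. Restricting a continuous character $f$ of the restricted product $U_S$ to its factors gives a tuple $\big((f_\mu)_{\mu\in S},(g_\nu)_{\nu\notin S}\big)$ with $f_\mu\in\Hom(K_\mu^\times,\{\pm1\})$, $g_\nu\in\Hom(\O_\nu^\times,\{\pm1\})$, and $g_\nu=1$ for all but finitely many $\nu$ (this last being exactly continuity on $U_S$); conversely every such tuple reassembles to a continuous character of $U_S$, with $f(b)=\prod_{\mu\in S}f_\mu(b)\prod_{\nu\notin S}g_\nu(b)$ for all $b\in U_S$, a finite product whenever $b\in\O_{K,S}^\times$ since then $b\in\O_\nu^\times$ for every $\nu\notin S$. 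Hence $f$ is trivial on $\O_{K,S}^\times$ if and only if $\prod_{\mu\in S}f_\mu(b)\prod_{\nu\notin S}g_\nu(b)=1$ for all $b\in\O_{K,S}^\times$, which is exactly the claimed description of the image. I do not anticipate a genuine obstacle; the only points requiring care are the restricted-product/continuity bookkeeping in the last step and the precise invocation of the reciprocity isomorphism (including the connected-component remark) in the first, both of which are standard.
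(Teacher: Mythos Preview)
Your argument is correct and is precisely the standard unwinding of the Class Field Theory statement; the paper's own proof is the single sentence ``The Lemma follows from the Class Field Theory,'' so your proposal is exactly the detail behind that citation. The one refinement you add that the paper leaves implicit is the continuity bookkeeping forcing almost all $g_\nu$ to be trivial, which is indeed needed for the infinite product $\prod_{\nu\notin S}g_\nu(b)$ to make sense and is tacitly assumed throughout the paper's applications of the lemma.
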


\begin{proof}
The Lemma follows from the Class Field Theory.
\end{proof}

\begin{lem}
\label{galois}
Let $\mathcal{K}$ be a field of characteristic not equal to $2$. Suppose $\mathcal{M}$ is a Galois extension of $\mathcal{K}$. Suppose that $\mathcal{N} \subset \mathcal{M}(\sqrt{\mathcal{M^\times}})$ is an extension of $\mathcal{M}$,i.e., $\mathcal{N}$ is a compositum of quadratic extensisons of $\mathcal{M}$. Then the Galois closure of $\mathcal{N}$ over $\mathcal{K}$ is contained in $\mathcal{M}(\sqrt{\mathcal{M^\times}})$.
\end{lem}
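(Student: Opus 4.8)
The plan is to prove the stronger statement that $\mathcal{M}(\sqrt{\mathcal{M}^\times})$ is \emph{itself} a Galois extension of $\mathcal{K}$. Once this is known, the lemma is immediate: the Galois closure of $\mathcal{N}$ over $\mathcal{K}$ is by definition the smallest Galois extension of $\mathcal{K}$ containing $\mathcal{N}$, and $\mathcal{M}(\sqrt{\mathcal{M}^\times})$ is a Galois extension of $\mathcal{K}$ containing $\mathcal{N}$ by hypothesis, so it contains the Galois closure.

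First I would check separability. Since $\mathcal{M}/\mathcal{K}$ is Galois it is separable, and since $\mathrm{char}\,\mathcal{K} \neq 2$ each polynomial $X^2 - a$ with $a \in \mathcal{M}^\times$ is separable, so $\mathcal{M}(\sqrt{\mathcal{M}^\times})/\mathcal{M}$ is separable; hence $\mathcal{M}(\sqrt{\mathcal{M}^\times})/\mathcal{K}$ is separable. Next I would verify normality. Fix an algebraic closure $\overline{\mathcal{K}} \supseteq \mathcal{M}(\sqrt{\mathcal{M}^\times})$ and take any $\sigma \in \Gal(\overline{\mathcal{K}}/\mathcal{K})$; it suffices to show $\sigma\bigl(\mathcal{M}(\sqrt{\mathcal{M}^\times})\bigr) = \mathcal{M}(\sqrt{\mathcal{M}^\times})$. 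Because $\mathcal{M}/\mathcal{K}$ is normal, $\sigma(\mathcal{M}) = \mathcal{M}$. For $a \in \mathcal{M}^\times$ we have $\sigma(\sqrt{a})^2 = \sigma(a) \in \mathcal{M}^\times$, so $\sigma(\sqrt{a}) \in \mathcal{M}(\sqrt{\mathcal{M}^\times})$; since $\mathcal{M}$ together with the elements $\sqrt{a}$, $a \in \mathcal{M}^\times$, generate $\mathcal{M}(\sqrt{\mathcal{M}^\times})$, we get $\sigma\bigl(\mathcal{M}(\sqrt{\mathcal{M}^\times})\bigr) \subseteq \mathcal{M}(\sqrt{\mathcal{M}^\times})$, and applying the same to $\sigma^{-1}$ yields equality. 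Thus $\mathcal{M}(\sqrt{\mathcal{M}^\times})/\mathcal{K}$ is normal and separable, hence Galois, which concludes the argument.

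I do not expect a real obstacle here; the only point requiring a little care is that $\mathcal{N}$ and $\mathcal{M}(\sqrt{\mathcal{M}^\times})$ may have infinite degree over $\mathcal{K}$, so I would phrase ``Galois'' in terms of separability together with stability under $\Gal(\overline{\mathcal{K}}/\mathcal{K})$ (rather than via a finite splitting field), and correspondingly describe the Galois closure of $\mathcal{N}$ as the compositum of its conjugates $\sigma(\mathcal{N})$ for $\sigma \in \Gal(\overline{\mathcal{K}}/\mathcal{K})$, each of which lands in $\mathcal{M}(\sqrt{\mathcal{M}^\times})$ by the computation above.
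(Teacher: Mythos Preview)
Your proof is correct and essentially the same as the paper's. The paper argues directly that each conjugate $\sigma(\mathcal{N})$ is again a compositum of quadratic extensions of $\mathcal{M}$ (using $\sigma(\mathcal{M})=\mathcal{M}$), hence lies in $\mathcal{M}(\sqrt{\mathcal{M}^\times})$; you package the same computation as the slightly stronger statement that $\mathcal{M}(\sqrt{\mathcal{M}^\times})/\mathcal{K}$ is itself Galois, but the key step --- $\sigma(\sqrt{a})^2=\sigma(a)\in\mathcal{M}^\times$ --- is identical.
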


\begin{proof}
The Galois closure is a compositum of $\sigma \mathcal{N}$, where $\sigma$ is an embedding from $\mathcal{N}$ to $\overline{\mathcal{K}}$. Since $\sigma(\mathcal{M}) = M$, and $\mathcal{N}$ is a compositum of quadratic extensions of $\mathcal{M}$, it follows that $\sigma \mathcal{N}$ is also a compositum of quadratic extensions of $\mathcal{M}$. 
\end{proof}

\begin{lem}
\label{boolean} 
The Galois group $\Gal(K(C[4])/K(C[2]))$ is a Boolean group, i.e., an abelian group in which every non trivial element has order $2$. In other words, $K(C[4])$ is a compositum of quadratic extensions of $K(C[2])$. 
\end{lem}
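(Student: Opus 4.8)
The plan is to pass to the mod-$4$ Galois representation. Since $K$ has characteristic $0$, we may fix an isomorphism $C[4]\cong(\Z/4\Z)^2$ of abelian groups, yielding an injection $\rho_4\colon \Gal(K(C[4])/K)\hookrightarrow \Aut(C[4])\cong\GL_2(\Z/4\Z)$. The first step is to show that $\rho_4$ carries $\Gal(K(C[4])/K(C[2]))$ into
$$
\Gamma:=\ker\big(\GL_2(\Z/4\Z)\longrightarrow\GL_2(\Z/2\Z)\big),
$$
the group of matrices congruent to the identity modulo $2$. Indeed, $\sigma\in\Gal(K(C[4])/K)$ lies in $\Gal(K(C[4])/K(C[2]))$ exactly when it fixes $C[2]$ pointwise; since $C[2]=2\,C[4]$ inside $C[4]$, we get for every $v\in C[4]$ that $2(\sigma v-v)=\sigma(2v)-2v=0$, so $\sigma v-v\in C[2]=2\,C[4]$, which says precisely $\rho_4(\sigma)\equiv I\pmod 2$.

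Next I would verify directly that $\Gamma$ is a Boolean group (abelian of exponent $2$). Writing an element of $\Gamma$ as $I+2B$ for a $2\times 2$ matrix $B$ over $\Z/4\Z$ (well-defined modulo $2$), one computes in $\GL_2(\Z/4\Z)$ that $(I+2B)(I+2B')=I+2(B+B')+4BB'=I+2(B+B')$, which is symmetric in $B$ and $B'$; hence $\Gamma$ is abelian, and $(I+2B)^2=I+4B=I$, so $\Gamma$ has exponent $2$. Consequently $\Gal(K(C[4])/K(C[2]))$, being a subgroup of $\Gamma$, is Boolean, which is the first assertion.

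For the reformulation, set $V:=\Gal(K(C[4])/K(C[2]))$, a finite $\F_2$-vector space. If $V=0$ there is nothing to prove; otherwise let $H_1,\dots,H_m$ be its hyperplanes, so $\bigcap_i H_i=0$. By the Galois correspondence each $H_i$ is the subgroup fixing a quadratic extension $L_i$ of $K(C[2])$ inside $K(C[4])$, and then $\Gal\big(K(C[4])/\prod_i L_i\big)=\bigcap_i H_i=0$, so $K(C[4])=\prod_i L_i$ is a compositum of quadratic extensions of $K(C[2])$ (one may instead invoke Kummer theory over the characteristic-$0$ field $K(C[2])$). The only point needing a little care is the very first step --- that acting trivially on the $2$-torsion is the same as reducing to the identity modulo $2$ --- and this is immediate once one notes $C[2]=2\,C[4]$; the rest is routine.
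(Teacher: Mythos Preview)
Your proof is correct and is essentially the paper's argument in matrix clothing: both hinge on the observation that $\sigma P - P \in C[2]$ for $P\in C[4]$, from which $\sigma^2 P = P$ follows immediately (your identity $(I+2B)^2=I$ is exactly this computation). The paper does it in two coordinate-free lines and leaves the abelianness and the compositum-of-quadratics reformulation implicit, whereas you spell both out; these are standard once exponent~$2$ is known.
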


\begin{proof}
For any $\sigma \in \Gal(K(C[4])/K(C[2]))$ and $P \in C[4]$, since $\sigma(2P) = 2P$, we have $\sigma(P) - P \in C[2]$, so $\sigma(\sigma(P) - P) + (\sigma(P) - P) = 0$, i.e., $\sigma^2(P) = P$.  
\end{proof}


\section{Selmer groups and local conditions}
From now on, fix elliptic curves $E$ and $A$ over $K$. For the rest of the paper, let $M:= K(E[2])$ and $M':= K(A[2])$. For our main purpose, we suppose $M \neq M'$. In this section, we define various $2$-Selmer groups and list useful lemmas to be used in later sections. Let $S$ be a (finite) set of places of $K$ containing all primes above $2$, all primes where $E$ or $A$ has bad reduction, and all archimedean places. We enlarge $S$, if necessary so that $\Pic(\O_{K,S}) = 1$, where $\O_{K,S}$ denotes the ring of $S$-integers of $K$. We continue to assume that $C$ is an elliptic curve over $K$. Let $\l$ denote a place of $K$.

\begin{defn}
We define the restriction map
$$
\res_\l: H^1(K, C[2]) \to H^1(K_\l, C[2])
$$
as the restriction map of group cohomology. 
\end{defn}
Although the map $\res_\l$ depends on $C$, we suppress it from the notation. Which elliptic curve we take for the restriction map will be always clear from the context.

\begin{defn}
\label{localcondition}
For $\chi \in \Xset(K_v)$, we define
$$
\beta_{C,v}(\chi) := \mathrm{Im}\big( C^{\chi}(K_v)/2 C^{\chi}(K_v) \to H^1(K_v, C^{\chi}[2]) \cong H^1(K_v, C[2])\big),
$$
where the first map is the Kummer map. Define
$$
h_{C,v}(\chi):= \dimtwo\big(\beta_{C,v}(1_v)/(\beta_{C,v}(1_v) \cap \beta_{C,v}(\chi))\big),
$$
where $1_v \in \Xset(K_v)$ denotes the trivial homomorphism. 
\end{defn}

\begin{defn}
For $i=0,1,2,$ Define
\begin{align*}
\cP_{E, i} &:=\; \{\l : \text{$\l \notin S$
   and $\dim_{\Ftwo}(E(K_\l)[2]) = i$}\} \text{ and} \\
\cP_{A, i} &:=\; \{\l : \text{$\l \notin S$
   and $\dim_{\Ftwo}(A(K_\l)[2]) = i$}\}.\quad 
\end{align*}
Define $\cP_0 := \cP_{E,0} \cap \cP_{A,0}$. 
\end{defn}

\begin{rem}
\label{noname}
Recall that for any $\chi_\l \in \Xset(K_\l)$, there is a canonical isomorphism $E[2] \cong E^{\chi_\l}[2]$.  Hence, Lemma \ref{locdim2torsion} shows that if $\l \in \cP_{E,i}$, then 
$$\dimtwo(\beta_{E,\l}(1_\l)) = \dimtwo(\beta_{E, \l}(\chi_\l)) = i.$$  
The same is true if $E$ is replaced by $A$. 
\end{rem}

\begin{lem}
\label{cPFroborder}
Suppose that $\l \not\in S$. Then
\begin{enumerate}
\item
$\l \in \cP_{E, 0}$ if and only if $\Frob_\l|_M \in \Gal(M/K)$ has order $3$,
\item
$\l \in \cP_{E, 1}$ if and only if $\Frob_\l|_M \in \Gal(M/K)$ has order $2$,
\item
$\l \in \cP_{E, 2}$ if and only if $\Frob_\l|_M \in \Gal(M/K)$ is identity. 
\end{enumerate}
The same is true if $E, M$ are replaced by  $A, M'$, respectively. 
\end{lem}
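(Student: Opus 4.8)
The plan is to reduce the statement about $\dim_{\Ftwo}(E(K_\l)[2])$ to a purely Galois-theoretic count of fixed points of the Frobenius element acting on $E[2]$. First I would recall that, since $\l \notin S$ and in particular $E$ has good reduction at $\l$ and $\l \nmid 2$, the $\Ftwo$-vector space $E[2] \cong \Ftwo^2$ carries an unramified action of $G_{K_\l}$, so the decomposition group at $\l$ acts through its (procyclic) quotient $\Gal(M_\l/K_\l)$, where $M_\l = K_\l(E[2])$. Under the fixed embedding $\overline{K} \hookrightarrow \overline{K_\l}$ this identifies $\Gal(M_\l/K_\l)$ with the subgroup of $\Gal(M/K)$ generated by $\Frob_\l|_M$, and the action of the topological generator $\Frob_\l$ on $E[2]$ is exactly the given representation restricted to this cyclic subgroup. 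The key point is then that $E(K_\l)[2] = E[2]^{G_{K_\l}} = E[2]^{\Frob_\l}$, so $\dim_{\Ftwo} E(K_\l)[2]$ equals the dimension of the fixed subspace of $\Frob_\l$ acting on $\Ftwo^2$.

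Next I would carry out the elementary linear algebra over $\Ftwo$: an element $g$ of $\GL_2(\Ftwo) \cong S_3$ fixes all of $\Ftwo^2$ iff $g = 1$ (two-dimensional fixed space), fixes exactly a one-dimensional subspace iff $g$ has order $2$ (the transpositions in $S_3$, which have a single nonzero fixed vector), and fixes only $0$ iff $g$ has order $3$ (the $3$-cycles, acting without nonzero fixed vectors). Applying this with $g = \Frob_\l|_M$, whose order in $\Gal(M/K) \hookrightarrow \GL_2(\Ftwo)$ is well-defined independently of the choice of $\Ftwo$-basis, gives precisely the three equivalences: $\l \in \cP_{E,2}$ iff $\Frob_\l|_M = 1$; $\l \in \cP_{E,1}$ iff $\Frob_\l|_M$ has order $2$; $\l \in \cP_{E,0}$ iff $\Frob_\l|_M$ has order $3$. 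The statement for $A$ and $M'$ is identical with $E$ replaced by $A$ throughout.

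I do not anticipate a serious obstacle here; the only points requiring care are bookkeeping ones. One must check that the Frobenius conjugacy class in $\Gal(M/K)$ is unramified at $\l$ and hence well-defined up to conjugacy (so that its order makes sense) — this uses $\l \notin S$, ensuring $M/K$ is unramified at $\l$. One must also confirm that passing from the global $G_K$-action to the local $G_{K_\l}$-action via the fixed embedding does not change the fixed space, which is immediate since $E(K_\l)[2]$ is by definition the $G_{K_\l}$-invariants. So the mild ``hard part'' is simply stating the $\GL_2(\Ftwo)$ fixed-point dictionary cleanly, but this is routine.
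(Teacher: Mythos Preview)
Your argument is correct and is exactly the approach the paper has in mind: the paper's proof is a two-line remark that $M/K$ is unramified at $\l$ (so $\Frob_\l|_M$ makes sense) and then defers the rest, whereas you have spelled out the underlying identification $E(K_\l)[2] = E[2]^{\Frob_\l}$ and the $\GL_2(\Ftwo)\cong S_3$ fixed-point count. There is no substantive difference in method.
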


\begin{proof}
Note that for every $\l \notin S$, the extension $M/K$ is unramified at $\l$. The lemma follows from Lemma \ref{locdim2torsion}. 
\end{proof}

\begin{rem}
\label{33}
Recall that $M \neq M'$. Suppose both $[M:K]$ and $[M':K]$ are divisible by $3$ (so, $\Gal(M/K), \Gal(M'/K)$ are $S_3$ or $\Z/3\Z$). Then, $[M\cap M':K]$ is not divisible by $3$ since otherwise $M$ and $M'$ would be the Galois closure of $M \cap M'$ over $K$ ($S_3$ has no normal subgroup of order $2$). Hence there exists $\sigma \in \Gal(MM'/K)$ such that
\begin{itemize}
\item
$\sigma|_{M\cap M'} = 1$
\item
$\sigma|_M$ has order $3$,
\item
$\sigma|_{M'}$ has order $3$.
\end{itemize}
By the Chebotarev density theorem there exist infinitely many primes $\l \not\in S$ such that $\Frob_\l|_{MM'} = \sigma$. Then by Lemma \ref{cPFroborder}, $\cP_0$ is an infinite set in this case.
\end{rem}

\begin{defn}
\label{selmer}
Let $\chi \in \Xset(K)$. The $2$-Selmer group $\Sel_2(C^\chi) \subset H^1(K,C[2])$ of $C^\chi$ (over $K$) is the (finite dimensional)
$\F_2$-vector space defined by the following exact sequence
$$
0 \too \Sel_2(C^\chi) \too H^1(K,C[2]) \too \dirsum{v} H^1(K_v,C[2])/\beta_{C, v}(\chi_v),
$$
where the rightmost map is the sum of the restriction maps, and $\chi_v$ is the restriction of $\chi$ to $G_{K_v}$. In particular, if $\chi$ is the trivial character, it is the classical $2$-Selmer group of $C$. 
\end{defn}

\begin{defn}
Define
$$r_2(C):=\dimtwo(\Sel_2(C))$$
for the sake of brevity.
\end{defn}

The following theorem due to Kramer gives a parity relation between $r_2(C)$ and $r_2(C^\chi)$ for $\chi \in \Xset(K)$. 

\begin{thm}[Kramer]
\label{parity}
Let $\chi \in \Xset(K)$. We have
$$
 r_2(C) -  r_2(C^\chi) \equiv \displaystyle{\sum_{v}}h_{C,v}(\chi_v) (\textrm{mod } 2),
$$
where $\chi_v$ is the restriction of $\chi$ to $G_{K_v}$.
\end{thm}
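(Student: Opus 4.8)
The plan is to work entirely inside $H^1(K,C[2])$, in which both $\Sel_2(C)$ and $\Sel_2(C^\chi)$ sit via the canonical identification $C[2]\cong C^\chi[2]$ of Remark \ref{canisom}, cut out by the local conditions $W_v:=\beta_{C,v}(1_v)$ and $W_v':=\beta_{C,v}(\chi_v)$ respectively. By Theorem \ref{tlp} and the standard fact that the image of a Kummer map is its own orthogonal complement, every $W_v$ and every $W_v'$ is a Lagrangian (maximal isotropic) subspace of $H^1(K_v,C[2])$; in particular $\dimtwo W_v=\dimtwo W_v'=\tfrac12\dimtwo H^1(K_v,C[2])$, so $h_{C,v}(\chi_v)=\dimtwo\!\big(W_v/(W_v\cap W_v')\big)=\dimtwo\!\big(W_v'/(W_v\cap W_v')\big)$ is symmetric in the two subspaces. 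Moreover $W_v=W_v'=\Hu(K_v,C[2])$ for all $v$ outside a finite set (those in $S$ together with the primes where $\chi$ ramifies), so the two Selmer structures differ at finitely many places only. Applying the Greenberg--Wiles formula to the self-dual module $C[2]$ with the self-dual Selmer structure $(W_v)_v$ (whose own Selmer group is the finite group $\Sel_2(C)$) gives the global identity $\sum_v\big(\dimtwo\beta_{C,v}(1_v)-\dimtwo C(K_v)[2]\big)=0$, and likewise with $1_v$ replaced by $\chi_v$.

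Next I would introduce the auxiliary $2$-Selmer groups $\Sel^\cap$ and $\Sel^+$ attached to the local conditions $W_v\cap W_v'$ and $W_v+W_v'$. Since each $W_v$ is Lagrangian, $(W_v\cap W_v')^\perp=W_v^\perp+W_v'^\perp=W_v+W_v'$, so $\Sel^\cap$ and $\Sel^+$ are Poitou--Tate dual Selmer structures for $C[2]$. The comparison formula for Selmer groups (as in \cite{kolyvagin}, or again Greenberg--Wiles) yields
\[
\dimtwo\Sel^+-\dimtwo\Sel^\cap=\sum_v\big(\dimtwo(W_v+W_v')-\dimtwo C(K_v)[2]\big).
\]
Splitting the summand as $\dimtwo\big((W_v+W_v')/W_v\big)+\big(\dimtwo W_v-\dimtwo C(K_v)[2]\big)$ and using the vanishing of the second global sum from the first paragraph, the right-hand side collapses to $\sum_v\dimtwo\big(W_v'/(W_v\cap W_v')\big)=\sum_v h_{C,v}(\chi_v)$; thus $\dimtwo\Sel^+-\dimtwo\Sel^\cap=\sum_v h_{C,v}(\chi_v)$ exactly.

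Finally I would relate these to the genuine Selmer groups. Since the local conditions are imposed placewise, $\Sel^\cap=\Sel_2(C)\cap\Sel_2(C^\chi)$ and $\Sel_2(C)+\Sel_2(C^\chi)\subseteq\Sel^+$, so
\[
r_2(C)+r_2(C^\chi)=\dimtwo\Sel^\cap+\dimtwo\big(\Sel_2(C)+\Sel_2(C^\chi)\big),
\]
and it remains to show $\dimtwo\big(\Sel_2(C)+\Sel_2(C^\chi)\big)\equiv\dimtwo\Sel^+\pmod 2$, i.e.\ that $\Sel^+/(\Sel_2(C)+\Sel_2(C^\chi))$ has even dimension. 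For this one uses the metabolic-space structure: the Poitou--Tate exact sequence for $\Sel^\cap\subseteq\Sel^+$ identifies the image of $\Sel^+$ in $\bigoplus_v(W_v+W_v')/(W_v\cap W_v')$ with a maximal isotropic subspace $Z$ (for the orthogonal sum of the induced local forms), inside which the images of $\Sel_2(C)$ and $\Sel_2(C^\chi)$ are $Z\cap X$ and $Z\cap Y$ for the complementary Lagrangians $X:=\bigoplus_v W_v/(W_v\cap W_v')$, $Y:=\bigoplus_v W_v'/(W_v\cap W_v')$. A short linear-algebra lemma on a metabolic space written as a hyperbolic sum of two transverse Lagrangians then gives $\dimtwo Z\equiv\dimtwo(Z\cap X)+\dimtwo(Z\cap Y)\pmod 2$, which unwinds to the desired congruence. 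Combining the three displays and using $a-b\equiv a+b\pmod 2$ proves the theorem.

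The one genuinely delicate point is the evenness in the last paragraph: a symmetric bilinear form in characteristic $2$ need not force a radical-quotient dimension to be even, so one must work with the quadratic refinement $q_v$ of $\langle\,,\,\rangle_v$ (for which $\beta_{C,v}(1_v)$ and $\beta_{C,v}(\chi_v)$ are isotropic), i.e.\ the metabolic-space language developed in Section $3$; the rest is bookkeeping with the Greenberg--Wiles formula. Since this statement is originally due to Kramer, the quickest alternative is simply to quote his $2$-descent theorem over the quadratic extension $L=\overline{K}^{\ker\chi}$ together with the isomorphism $\Sel_2(C/L)^-\cong\Sel_2(C^\chi/K)$, which gives the congruence directly.
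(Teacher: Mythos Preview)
Your argument is correct and substantially more detailed than the paper's own treatment, which simply cites \cite[Theorem~2.7 and Lemma~2.9]{MR} without further elaboration. The route you take---comparing the two self-dual Selmer structures through the auxiliary groups $\Sel^\cap$ and $\Sel^+$, applying the Greenberg--Wiles formula to obtain $\dimtwo\Sel^+-\dimtwo\Sel^\cap=\sum_v h_{C,v}(\chi_v)$ exactly, and then reducing the parity statement to a linear-algebra lemma about three Lagrangians $X,Y,Z$ in a metabolic space with $V=X\oplus Y$---is essentially the argument in the Mazur--Rubin and Klagsbrun--Mazur--Rubin papers, and it meshes cleanly with the machinery the present paper develops later in Section~3 (Theorem~\ref{listthm} and Lemma~\ref{brauer} supply precisely the $q$-isotropy of $\beta_{C,v}(\chi_v)$ and of the global image $Z$ that you need). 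Your identification of the delicate point is exactly right: in characteristic~$2$ the bilinear form alone does not force the required evenness, and one must use the quadratic refinement $q_{C,v}$ so that the graph isomorphism $\phi:X\to X^{*}$ encoding $Z$ is alternating, hence of even rank. The alternative you mention at the end---quoting Kramer's original $2$-descent over $L=\overline{K}^{\ker\chi}$---is, in effect, what the paper does by citation.
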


\begin{proof}
See for example \cite[Theorem 2.7]{MR} and \cite[Lemma 2.9]{MR}.
\end{proof}

\begin{lem}
 \label{localzero}
Let $\chi_v \in \Xset(K_v)$. Suppose that one of the following conditions holds:
\begin{itemize}
\item
$\chi_v$ is trivial, 
\item
 $v \nmid \infty$, $C/K_v$ has good reduction, and $\chi_v$ is unramified, 
\item
$v\nmid 2$ is a prime of $K$ and $C(K_v)[2] = 0.$
\end{itemize}
Then $\beta_{C,v}(1_v) = \beta_{C,v}(\chi_v)$, i.e., $h_{C,v}(\chi_v) = 0.$
 \end{lem}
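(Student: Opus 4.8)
The plan is to verify $\beta_{C,v}(1_v) = \beta_{C,v}(\chi_v)$ case by case, using Remark \ref{canisom} to identify $H^1(K_v, C^{\chi_v}[2])$ with $H^1(K_v, C[2])$ so that both subspaces live in the same group. First, if $\chi_v$ is trivial then $C^{\chi_v} = C$ and the statement is tautological. Second, suppose $v \nmid \infty$, $C/K_v$ has good reduction, and $\chi_v$ is unramified. Then the quadratic twist $C^{\chi_v}/K_v$ still has good reduction (an unramified quadratic twist of a curve with good reduction has good reduction), and it is a standard fact that for an elliptic curve $C$ with good reduction over a local field $K_v$ with $v \nmid 2$, the image $\beta_{C,v}(1_v)$ of the Kummer map coincides with the unramified subgroup $H^1_{\ur}(K_v, C[2])$; applying the same to $C^{\chi_v}$ and using the canonical identification of the modules, which also respects the unramified local conditions (since the isomorphism $\phi\colon C \to C^{\chi_v}$ is defined over the unramified extension cutting out $\chi_v$), gives $\beta_{C,v}(\chi_v) = H^1_{\ur}(K_v, C[2]) = \beta_{C,v}(1_v)$. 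One must take a little care when $v \mid 2$, but this bullet excludes that case by requiring $v \nmid \infty$ together with good reduction — actually the cleanest route is to note that if $v \mid 2$ then $v \in S$ is typically assumed, but since the statement as written does not exclude $v \mid 2$, I would instead invoke the general fact that for an unramified $\chi_v$ the twisted curve $C^{\chi_v}$ is isomorphic to $C$ over the unramified quadratic extension $L/K_v$, and then compare via restriction-corestriction; however the standard reference (e.g.\ the local conditions computations in \cite{MR} or \cite{kolyvagin}) handles exactly this, so I would cite it.

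Third, suppose $v \nmid 2$ is a prime of $K$ and $C(K_v)[2] = 0$. Here I would argue that both $\beta_{C,v}(1_v)$ and $\beta_{C,v}(\chi_v)$ are zero. Indeed, by Lemma \ref{locdim2torsion}, $\dimtwo(C(K_v)/2C(K_v)) = \dimtwo(C(K_v)[2]) = 0$, so $C(K_v)/2C(K_v) = 0$ and hence $\beta_{C,v}(1_v) = 0$. For the twist, apply Remark \ref{canisom} and Remark \ref{noname}: the canonical isomorphism $C[2] \cong C^{\chi_v}[2]$ shows $C^{\chi_v}(K_v)[2] \cong C(K_v)[2] = 0$ as well — more precisely, $C^{\chi_v}(K_v)[2]$ is identified with a subspace of $C(\overline{K_v})[2]$ fixed by $G_{K_v}$, which is the same subspace that equals $C(K_v)[2] = 0$ since the $G_{K_v}$-action on the $2$-torsion is unchanged by the twist. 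Hence by Lemma \ref{locdim2torsion} applied to $C^{\chi_v}$ we get $C^{\chi_v}(K_v)/2C^{\chi_v}(K_v) = 0$ and therefore $\beta_{C,v}(\chi_v) = 0 = \beta_{C,v}(1_v)$.

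Finally, the equality $h_{C,v}(\chi_v) = \dimtwo(\beta_{C,v}(1_v)/(\beta_{C,v}(1_v) \cap \beta_{C,v}(\chi_v))) = 0$ is immediate from $\beta_{C,v}(1_v) = \beta_{C,v}(\chi_v)$. The main obstacle is the second bullet: one must be careful that the canonical $G_{K_v}$-module isomorphism $C[2] \cong C^{\chi_v}[2]$ of Remark \ref{canisom} genuinely carries the unramified local condition for $C$ to that for $C^{\chi_v}$, which relies on the twisting cocycle $\chi_v$ being unramified so that the isomorphism $\phi$ is defined over the maximal unramified extension; once this is granted, the identification of $\beta_{C,v}(1_v)$ and $\beta_{C,v}(\chi_v)$ with the common unramified subgroup $H^1_{\ur}(K_v, C[2])$ — a fact I would cite from \cite{MR} or \cite{kolyvagin} rather than reprove — finishes the argument.
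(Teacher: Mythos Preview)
Your treatment of the first and third bullets matches the paper's: the trivial case is immediate, and the third case is exactly the dimension count from Lemma~\ref{locdim2torsion} applied to both $C$ and $C^{\chi_v}$ (using that the $G_{K_v}$-action on $2$-torsion is unchanged by the twist).

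For the second bullet your route genuinely differs from the paper's, and the difference matters. You argue that for good reduction the Kummer image equals $H^1_{\ur}(K_v,C[2])$, and since an unramified twist again has good reduction, both images coincide with this unramified subgroup. This is fine when $v\nmid 2$, but --- as you yourself flag --- it breaks down when $v\mid 2$: there $\dimtwo\beta_{C,v}(1_v)=[K_v:\Q_2]+\dimtwo C(K_v)[2]$, strictly larger than $\dimtwo H^1_{\ur}(K_v,C[2])$, so the identification with $H^1_{\ur}$ is simply false. Your suggested fallback (restriction--corestriction, or ``cite \cite{MR} or \cite{kolyvagin}'') is not an argument, and the lemma as stated does not exclude $v\mid 2$.

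The paper avoids this entirely by a different mechanism: it lets $L=\overline{K}_v^{\ker(\chi_v)}$ and invokes Mazur's surjectivity of the norm $\N_{L/K_v}\colon C(L)\to C(K_v)$ for good reduction and unramified $L/K_v$ (\cite[Corollary~4.4]{norm}), valid at \emph{all} finite places including those above $2$. Then \cite[Lemma~2.9]{MR} identifies $\beta_{C,v}(1_v)\cap\beta_{C,v}(\chi_v)$ with the Kummer image of $\N_{L/K_v}(C(L))$, so norm surjectivity gives $\beta_{C,v}(1_v)\subset\beta_{C,v}(\chi_v)$; since both are Lagrangian (hence of equal dimension), they coincide. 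This norm argument is what you are missing to close the $v\mid 2$ case.
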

\begin{proof}
The first case is easy to see. The third one follows from Lemma \ref{locdim2torsion}. Let $L = \overline{K}_v^{\ker(\chi_v)}$. In the second case, \cite[Corollary 4.4]{norm}) shows that $\N_{L/K_v}(C(L)) = C(K_v)$, where $\N_{L/K_v}$ is the norm map from $C(L)$ to $C(K_v)$.  Thus, by Lemma \cite[Lemma 2.9]{MR}, the result follows.
\end{proof}

\begin{rem}
For a given elliptic curve $C$ over $K$ and $\chi \in \Xset(K)$, there are only finitely many places, where either $C$ has bad reduction or $\chi$ is ramified. Thus, by Lemma \ref{localzero}, the sum in Theorem \ref{parity} is indeed a finite sum. 
\end{rem}

\begin{lem}
\label{ramhv}
Let $\l \nmid 2$ be a prime of $K$ and suppose $C$ has good reduction at $\l$. Let $\chi_\l \in \Xset_{\ram}(K_\l)$. Then 
$$\beta_{C, \l}(1_\l) \cap \beta_{C, \l}(\chi_\l) = \{0\} \text{ and } h_{C, \l}(\chi_\l) = \dimtwo(C(K_\l)[2]).$$ 
\end{lem}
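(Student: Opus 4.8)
The plan is to compute the two quantities $\beta_{C,\l}(1_\l)\cap\beta_{C,\l}(\chi_\l)$ and $h_{C,\l}(\chi_\l)$ directly by exploiting good reduction at $\l$ together with the fact that $\chi_\l$ is ramified. First I would recall from Remark \ref{noname} that both $\beta_{C,\l}(1_\l)$ and $\beta_{C,\l}(\chi_\l)$ are subspaces of $H^1(K_\l,C[2])$ of dimension $i=\dimtwo(C(K_\l)[2])$, using the canonical isomorphism $C[2]\cong C^{\chi_\l}[2]$ and Lemma \ref{locdim2torsion}. The key structural input is that since $C$ has good reduction at $\l\nmid 2$, the image of the Kummer map $\beta_{C,\l}(1_\l)$ equals the unramified subgroup $H^1_{\ur}(K_\l,C[2])$, and this is an isotropic (in fact maximal isotropic) subspace for the Tate local pairing $\langle\ ,\ \rangle_\l$ of Theorem \ref{tlp}. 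For the twist by a ramified character $\chi_\l$, the curve $C^{\chi_\l}/K_\l$ acquires bad (additive) reduction, but one still knows that $\beta_{C,\l}(\chi_\l)$ is a maximal isotropic subspace of $H^1(K_\l,C[2])$ with respect to the same pairing; this is the standard self-duality of Selmer local conditions (e.g.\ via \cite[Theorem 2.3.4]{kolyvagin} or the argument in \cite{MR}).

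Next I would identify $\beta_{C,\l}(\chi_\l)$ more precisely. Writing $L=\overline{K}_\l^{\ker(\chi_\l)}$, which is the ramified quadratic extension cut out by $\chi_\l$, one has $\beta_{C,\l}(\chi_\l)=\im\big(C(K_\l)/2C(K_\l)\to H^1(K_\l,C[2])\big)$ after transporting along the canonical isomorphism, but the relevant description is that an unramified class in $H^1(K_\l,C[2])$ lies in $\beta_{C,\l}(\chi_\l)$ only if it is ``orthogonal to ramification coming from $L$''. Concretely, I expect $\beta_{C,\l}(1_\l)\cap\beta_{C,\l}(\chi_\l)$ to be contained in the subgroup of unramified classes that restrict trivially to $H^1$ of the residue field extension forced by $L$; since $L/K_\l$ is ramified, the inertia invariants force this intersection down to classes that are simultaneously unramified and in the image from $C^{\chi_\l}$, and a direct cocycle computation shows that any nonzero unramified class fails to be a norm from $L$ unless it is zero. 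Thus $\beta_{C,\l}(1_\l)\cap\beta_{C,\l}(\chi_\l)=\{0\}$.

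Given the intersection is trivial, the formula for $h_{C,\l}(\chi_\l)$ follows immediately: by definition
$$
h_{C,\l}(\chi_\l)=\dimtwo\big(\beta_{C,\l}(1_\l)/(\beta_{C,\l}(1_\l)\cap\beta_{C,\l}(\chi_\l))\big)=\dimtwo\beta_{C,\l}(1_\l)=\dimtwo(C(K_\l)[2]),
$$
where the last equality is Remark \ref{noname} (i.e.\ Lemma \ref{locdim2torsion} applied to $C$ at $\l$). The main obstacle I anticipate is the second step: rigorously showing $\beta_{C,\l}(1_\l)\cap\beta_{C,\l}(\chi_\l)=\{0\}$ rather than merely that the intersection is proper. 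I would handle this either by a direct computation with the $2$-torsion and the explicit Kummer images over the unramified and the ramified quadratic extensions of $K_\l$ (splitting into the cases $\dimtwo(C(K_\l)[2])\in\{1,2\}$, the case $0$ being vacuous), or more cleanly by invoking the product formula / orthogonality: $\beta_{C,\l}(1_\l)$ is maximal isotropic and any class in the intersection would have to pair trivially with all ramified classes as well, which by nondegeneracy of $\langle\ ,\ \rangle_\l$ (Theorem \ref{tlp}) forces it into $\beta_{C,\l}(1_\l)^{\perp}\cap(\text{ramified-part})^\perp$; tracking the dimensions (using that ramified quadratic twisting exchanges the unramified and a complementary isotropic line when $\dim=1$, and is more subtle when $\dim=2$) pins the intersection at $0$. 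This is essentially \cite[Lemma 2.9]{MR} or \cite[Proposition 4.6]{norm} in disguise, and I would cite whichever of these gives the cleanest statement.
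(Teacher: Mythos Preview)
The paper's own proof is a single-line citation: ``This is \cite[Lemma 2.11]{MR}.'' Your plan reaches the same endpoint (you even propose citing \cite[Lemma 2.9]{MR} or a result from \cite{norm}), and your deduction of the formula for $h_{C,\l}(\chi_\l)$ from the trivial intersection via Lemma \ref{locdim2torsion}/Remark \ref{noname} is exactly right. So in that sense your proposal and the paper agree.

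That said, the direct argument you sketch for $\beta_{C,\l}(1_\l)\cap\beta_{C,\l}(\chi_\l)=\{0\}$ has a genuine gap. Your orthogonality approach does not work as written: knowing that both $\beta_{C,\l}(1_\l)$ and $\beta_{C,\l}(\chi_\l)$ are maximal isotropic for $\langle\ ,\ \rangle_\l$ does \emph{not} force their intersection to be trivial --- two Lagrangians in a $4$-dimensional symplectic space over $\Ftwo$ can perfectly well share a line, and nothing in ``pairing trivially with all ramified classes'' follows from membership in the intersection. The actual mechanism, which is what \cite[Lemma 2.11]{MR} uses, is arithmetic rather than symplectic: by Kramer's norm description the intersection is the Kummer image of $N_{L/K_\l}C(L)$ inside $C(K_\l)/2C(K_\l)$, and since $L/K_\l$ is \emph{totally ramified} the norm on the reduction $\tilde{C}$ is multiplication by $2$; combined with the $2$-divisibility of the formal group (as $\l\nmid 2$) this gives $N_{L/K_\l}C(L)\subset 2C(K_\l)$, hence trivial image. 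Your ``case split on $\dimtwo(C(K_\l)[2])$'' suggestion could also be made to work, but the content is again this norm computation, not the Tate pairing. If you want a self-contained proof rather than a citation, write that norm argument; otherwise, cite \cite[Lemma 2.11]{MR} as the paper does.
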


\begin{proof}
This is \cite[Lemma 2.11]{MR}.
\end{proof}

We define strict, relaxed, and locally twisted $2$-Selmer groups as follows.

\begin{defn}
\label{variousselmer}
Define the strict 2-Selmer group at $\l$ 
$$
\Sel_{2, \l}(C) := \{x \in \Sel_{2}(C):\mathrm{res}_{\l}(x) = 0  \}.
$$
Define the relaxed 2-Selmer group at $\l$ 
$$
\Sel_2^{\l}(C) := \{x \in H^1(K, C[2]): \mathrm{res}_{v}(x) \in \beta_{C,v}(1_v) \text{ if } v  \neq \l \}.
$$
For $\psi_\l \in \Xset(K_\l)$, define
\begin{align*}
\Sel_{2}(C, \psi_\l) := \{x \in H^1(K, C[2]): \text{ }  & \mathrm{res}_{\l}(x) \in \beta_{C,\l}(\psi_\l), \text{ and } \\
& \mathrm{res}_{v}(x) \in \beta_{C, v}(1_v) \text{ if } v \neq \l\}.
\end{align*}
\end{defn}

\begin{thm}
\label{ptd}
The images of right hand restriction maps of the following exact sequences are orthogonal complements with respect to the pairing given by the pairing \eqref{tld} at $\l$
$$
\xymatrix@R=3pt@C=20pt{
0 \ar[r] & \Sel_2(C) \ar[r] & \Sel_2^\l(C) \ar[r]
    &  H^1(K_\l,C[2])/\beta_{C, \l}(1_\l),  \\
0 \ar[r] & \Sel_{2, \l}(C) \ar[r] & \Sel_2(C) \ar[r] & \beta_{C,\l}(1_\l).
}
$$
In particular, 
$$
\dimtwo(\Sel_2^\l(C)) - \dimtwo(\Sel_{2, \l}(C)) = \dimtwo(\beta_{C,\l}(1_\l))
= \frac{1}{2}\dimtwo(H^1(K_\l, C[2])).
$$
\end{thm}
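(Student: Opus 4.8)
The statement is the standard form of global (Poitou--Tate) duality for the Selmer structure on $C[2]$ whose local conditions are the $\beta_{C,v}(1_v)$, so the plan is to deduce it from the comparison/Poitou--Tate machinery used in \cite{MR} and \cite[Theorem~2.3.4]{kolyvagin}. The one input about $C$ that I would record first is the purely local fact that for every place $v$ of $K$ the subspace $\beta_{C,v}(1_v)\subseteq H^1(K_v,C[2])$ is \emph{Lagrangian} for $\langle\ ,\ \rangle_v$: it is isotropic --- the standard compatibility of the Weil pairing on $C[2]$ with the group law makes the image of the Kummer map self-orthogonal under $\langle\ ,\ \rangle_v$ --- and $\dimtwo\beta_{C,v}(1_v)=\tfrac12\dimtwo H^1(K_v,C[2])$, by the local Euler characteristic formula, local duality $H^2(K_v,C[2])\cong H^0(K_v,C[2])^\vee$, and (for $v\nmid2$) Lemma \ref{locdim2torsion}. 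In particular $\langle\ ,\ \rangle_\l$ induces a perfect pairing between $\beta_{C,\l}(1_\l)$ and $H^1(K_\l,C[2])/\beta_{C,\l}(1_\l)$, so the two subspaces in the statement lie in perfectly paired spaces, and the last displayed equality $\dimtwo\beta_{C,\l}(1_\l)=\tfrac12\dimtwo H^1(K_\l,C[2])$ is immediate.

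The heart of the argument is to show that $V:=\res_\l\big(\Sel_2^\l(C)\big)$ is itself Lagrangian in $H^1(K_\l,C[2])$. Isotropy: for $x,y\in\Sel_2^\l(C)$, global reciprocity gives $\sum_v\langle\res_v(x),\res_v(y)\rangle_v=0$ (the sum of local invariants of a global class in $H^2(K,\{\pm1\})$ vanishes); for every $v\neq\l$ both $\res_v(x)$ and $\res_v(y)$ lie in the isotropic space $\beta_{C,v}(1_v)$, so those terms vanish and hence $\langle\res_\l(x),\res_\l(y)\rangle_\l=0$. For the dimension: the kernel of $\res_\l\colon\Sel_2^\l(C)\to H^1(K_\l,C[2])$ is exactly $\Sel_{2,\l}(C)$ (if $x\in\Sel_2^\l(C)$ and $\res_\l(x)=0$, then $x\in\Sel_2(C)$ since $0\in\beta_{C,\l}(1_\l)$, so $x\in\Sel_{2,\l}(C)$), hence $\dimtwo V=\dimtwo\Sel_2^\l(C)-\dimtwo\Sel_{2,\l}(C)$; and applying Poitou--Tate to the Selmer structure that is relaxed at $\l$ and standard elsewhere --- whose dual is the structure strict at $\l$ and standard elsewhere, since each $\beta_{C,v}(1_v)$ is Lagrangian --- yields the exact sequence
$$0\to\Sel_{2,\l}(C)\to\Sel_2^\l(C)\xrightarrow{\res_\l}H^1(K_\l,C[2])\to\Sel_2^\l(C)^\vee\to\Sel_{2,\l}(C)^\vee\to0,$$
so $\dimtwo\Sel_2^\l(C)-\dimtwo\Sel_{2,\l}(C)=\tfrac12\dimtwo H^1(K_\l,C[2])$; thus $V$ is isotropic of half dimension, i.e.\ Lagrangian. (Equivalently, exactness of that sequence at $H^1(K_\l,C[2])$ reads $V=V^\perp$, the outgoing map there being $z\mapsto\langle z,\res_\l(\,\cdot\,)\rangle_\l$.)

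The remaining step is linear algebra. Write $\beta:=\beta_{C,\l}(1_\l)$; both $V$ and $\beta$ are Lagrangian in $H^1(K_\l,C[2])$. The image of $\Sel_2^\l(C)$ in $H^1(K_\l,C[2])/\beta$ is $W_1:=(V+\beta)/\beta$, and since $\Sel_2(C)=\{x\in\Sel_2^\l(C):\res_\l(x)\in\beta\}$, the image of $\Sel_2(C)$ under $\res_\l$ is $W_2:=V\cap\beta\subseteq\beta$. Because $V$ is isotropic, $\langle v,w\rangle_\l=0$ for all $v\in V$ and $w\in V\cap\beta$, so $W_1$ and $W_2$ pair trivially; moreover $\dimtwo W_1+\dimtwo W_2=\big(\dimtwo(V+\beta)-\dimtwo\beta\big)+\dimtwo(V\cap\beta)=\dimtwo V=\dimtwo\beta$, which, given the perfect pairing of Step~1, forces $W_1=W_2^\perp$ and $W_2=W_1^\perp$. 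Finally, from the two exact sequences in the statement (whose kernels, as in Step~2, are $\Sel_2(C)$ and $\Sel_{2,\l}(C)$), $\dimtwo W_1=\dimtwo\Sel_2^\l(C)-\dimtwo\Sel_2(C)$ and $\dimtwo W_2=\dimtwo\Sel_2(C)-\dimtwo\Sel_{2,\l}(C)$, whence $\dimtwo\Sel_2^\l(C)-\dimtwo\Sel_{2,\l}(C)=\dimtwo W_1+\dimtwo W_2=\dimtwo\beta_{C,\l}(1_\l)$, which is the ``in particular.'' I expect Step~2 --- the global dimension count --- to be the only real obstacle, and I would treat it by quoting the relevant comparison statement from \cite{kolyvagin} (or \cite{MR}) rather than reproving Poitou--Tate; the local Lagrangian property of Step~1 and the attendant archimedean and $2$-adic bookkeeping are then subsumed in that citation.
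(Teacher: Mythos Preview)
Your proposal is correct and takes essentially the same approach as the paper: the paper's entire proof is the single sentence ``The theorem follows from the Global Poitou--Tate Duality. For example, see \cite[Theorem~2.3.4]{kolyvagin},'' and your argument is simply a careful unpacking of that citation (local Lagrangian property of $\beta_{C,v}(1_v)$, global reciprocity for isotropy, the Poitou--Tate five-term sequence for the dimension count, then the linear algebra of two Lagrangians). In fact the intermediate statements you isolate---that $\beta_{C,v}(1_v)$ is Lagrangian and that $\res_\l(\Sel_2^\l(C))$ is Lagrangian---appear later in the paper as Theorem~\ref{listthm}(ii) and Proposition~\ref{reslag}.
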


\begin{proof}
The theorem follows from the Global Poitou-Tate Duality. For example, see \cite[Theorem 2.3.4]{kolyvagin}.
\end{proof}


\section{Metabolic spaces and Lagrangian subspaces}
In this section, we define Metabolic spaces, Lagrangian (maximal isotropic) subspaces, and canonical quadratic forms induced by the Heisenberg groups. We closely follow \cite{KMR}. For general theory, we refer the reader to section 2 and section 4 of \cite{poonenrain}. The main goal of this section is to prove Proposition \ref{crucial}, which will play a crucial role to prove Theorem \ref{case1}, Theorem \ref{case2}, and Theorem \ref{case3}. We continue to assume that $C$ is an elliptic curve over $K$. 

Let $V$ be a finite dimensional $\Ftwo$-vector space. 
\begin{defn}
\label{metabolic}
A {\em quadratic form} on $V$ is a function $q : V \to \Ftwo$ such that
\begin{itemize}
\item
$q(av) = a^2 q(v)$ for every $a \in \Ftwo$ and $v \in V$, \text{ and }
\item
the map $(v,w)_q := q(v+w)-q(v)-q(w)$ is a bilinear form.
\end{itemize}
We call $X$ a {\em Lagrangian subspace} or {\em maximal isotropic subspace} of $V$ if 
\begin{enumerate}
\item
$q(X) = 0$, and 
\item
$X$ = $X^{\perp}$ in the induced bilinear form.
\end{enumerate}
  A {\em metabolic space} $(V, q)$ is a vector space such that $( , )_q$ is nondegenerate and $V$ contains a Lagrangian subspace.
\end{defn}

\begin{defn}
Let $L$ be either $K$ or $K_v$ for a place $v$ of $K$. Define the Heisenberg group of $C$ over $L$
$$
\H_{C, L} := \{(f,P) \in \overline{L}(C) \times C[2] : \text{ the divisor of $f$ is $2[P] - 2[O]$},
$$
where $\overline{L}(C)$ is the function field of $C$ over $\overline{L}$, and $O$ is the trivial element of $C[2]$. The group law is defined by
$$
(f, P) \times (g, Q) := (\tau_Q^{\ast}(f)g, P+ Q), 
$$
where $\tau_Q$ is translation by $Q$ on $C$. 
\end{defn}

We have an exact sequence
\begin{equation}
\label{shortexact}
1 \to \overline{L}^\times \to \H_{C, L} \to C[2] \to 0,
\end{equation}
where the middle maps are defined by sending $l$ to $(l,O)$, and by taking $(f, P)$ to $P$, respectively. Let
$$
q_{\H_{C,L}} : H^1(L, C[2]) \to H^2(L, \overline{L}^\times) 
$$
be the connecting homomorphism of the long exact sequence of (non-abelian) Galois cohomology groups induced by \eqref{shortexact}.  By the construction, $q_{\H_{C,L}}$ is functorial with respect to base extension. 

\begin{defn}
Define
$$
q_{_{C,v}} : H^1(K_v, C[2]) \to H^2(K_v, \overline{K}_v^\times) \subset \Q/\Z
$$
be the composition of $q_{\H_{C,K_v}}$ and the invariant map $\mathrm{inv}_v : H^2(K_v, \overline{K}_v^\times) \to \Q/\Z$.
\end{defn}

Recall that for a quadratic form $q$, we associate a bilinear form $( , )_q$ (Definition \ref{metabolic}).

\begin{thm}
\label{listthm}
Suppose that $\chi\in \Xset(K_v)$. We have
\begin{enumerate}
\item
The bilinear form on $H^1(K_v, C[2])$ associated to $q_{_{C,v}}$ is exactly the pairing \eqref{tld} in Theorem \ref{tlp},
\item
$\beta_{C,v}(1_v)$ is a Lagrangian subspace of $(H^1(K_v, C[2]), q_{_{C,v}})$,
\item
$(H^1(K_v, C[2]), q_{_{C,v}})$ is a metabolic space.
\item
The canonical isomorphism $C[2] \cong C^\chi[2]$ identifies $q_{_{C,v}}$ and $q_{_{C^\chi,v}}$.
\item
$\beta_{C,v}(\chi)$ is a Lagrangian subspace of $(H^1(K_v, C[2]), q_{_{C,v}})$.
\end{enumerate}
\end{thm}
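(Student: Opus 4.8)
The plan is to prove the five parts of Theorem~\ref{listthm} essentially in the order listed, since each is either a classical fact about the Heisenberg group or an easy consequence of the preceding parts together with Remark~\ref{canisom}. Throughout I would work over a fixed place $v$ and suppress it where convenient, writing $q = q_{_{C,v}}$.

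For part (i), the statement to prove is that the symmetric bilinear form $(x,y)_q = q(x+y)-q(x)-q(y)$ attached to the quadratic form $q = \mathrm{inv}_v \circ q_{\H_{C,K_v}}$ coincides with the Tate pairing $\langle\ ,\ \rangle_v$ of Theorem~\ref{tlp}. The key input is the standard commutator computation in the Heisenberg group: for $(f,P),(g,Q)\in\H_{C,K_v}$, the commutator $(f,P)(g,Q)(f,P)^{-1}(g,Q)^{-1}$ lands in $\overline{K}_v^\times$ and equals the value of the Weil pairing $e_2(P,Q)\in\{\pm1\}$. This identifies the "bilinearization" of the central extension \eqref{shortexact} with the extension $1\to\{\pm1\}\to(\text{Heisenberg mod }\overline{K}_v^\times\text{-powers})\to C[2]\to0$ governing the Weil pairing, and then the general formalism (e.g. \cite[\S2]{poonenrain} or \cite{KMR}) relating the quadratic refinement of a central extension to cup product with the extension class gives $(x,y)_q = x\cup y$ under the Weil pairing $C[2]\otimes C[2]\to\mu_2$, composed with $\mathrm{inv}_v$; this is exactly \eqref{tld}. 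I would cite \cite{poonenrain, KMR} for the general lemma rather than reprove it.

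For part (ii), the Lagrangian property of the image of the Kummer map $\beta_{C,v}(1_v)$ amounts to two facts: first, $\beta_{C,v}(1_v)$ is its own orthogonal complement under \eqref{tld}, which is the classical statement that the image of local Kummer descent is maximal isotropic for the Tate pairing (a consequence of local Tate duality together with $\#C(K_v)/2C(K_v) = \#C(K_v)[2]\cdot\#H^2$-type counting, or directly \cite[\S3]{poonenrain}); second, $q$ vanishes on $\beta_{C,v}(1_v)$. For the vanishing I would use functoriality of $q_{\H}$ with respect to base change: a class in the image of $C(K_v)/2C(K_v)$ is split by a field over which the corresponding point becomes divisible by $2$, and one checks that pulling the Heisenberg extension back along such a point gives a split extension, so $q_{\H_{C,K_v}}$ kills it; applying $\mathrm{inv}_v$ gives $q=0$ there. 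Part (iii) is then immediate: \eqref{tld} is nondegenerate by Theorem~\ref{tlp}, and $H^1(K_v,C[2])$ contains the Lagrangian $\beta_{C,v}(1_v)$ by (ii), which is the definition of a metabolic space.

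Parts (iv) and (v) use Remark~\ref{canisom}. The canonical $G_{K_v}$-isomorphism $C[2]\xrightarrow{\sim}C^\chi[2]$ is the restriction of $\phi:C\to C^\chi$, which is an isomorphism of curves over $\overline{K}_v$; hence it induces an isomorphism of Heisenberg groups $\H_{C,K_v}\xrightarrow{\sim}\H_{C^\chi,K_v}$ (via $(f,P)\mapsto((\phi^{-1})^*f,\phi(P))$, using $\phi^*$ of a degree-$2$ function with divisor $2[\phi P]-2[O]$) compatible with the central extensions \eqref{shortexact}; however, $\phi$ is only defined over $\overline{K}_v$, not $K_v$, and its Galois-twisted behaviour $\phi^\sigma = \chi(\sigma)\phi$ must be tracked. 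The point is that on $C[2]$ the factor $\chi(\sigma)=\pm1$ acts trivially (Remark~\ref{canisom}), and likewise the induced map on the Heisenberg groups is $G_{K_v}$-equivariant after this cancellation, at least up to the central $\overline{K}_v^\times$ which does not affect the connecting map into $H^2(K_v,\overline{K}_v^\times)$. So the two connecting homomorphisms $q_{\H_{C,K_v}}$ and $q_{\H_{C^\chi,K_v}}$ are identified under $C[2]\cong C^\chi[2]$, and composing with $\mathrm{inv}_v$ gives (iv). Finally (v): under the canonical isomorphism $H^1(K_v,C^\chi[2])\cong H^1(K_v,C[2])$, the subspace $\beta_{C,v}(\chi)$ is by definition the image of $\beta_{C^\chi,v}(1_v)$, which is a Lagrangian of $(H^1(K_v,C^\chi[2]), q_{_{C^\chi,v}})$ by (ii) applied to $C^\chi$, and this is carried to a Lagrangian of $(H^1(K_v,C[2]), q_{_{C,v}})$ by (iv). I expect the genuine obstacle to be part (iv): making precise the claim that the $\overline{K}_v$-isomorphism $\phi$, twisted by $\chi$, nonetheless induces a $G_{K_v}$-equivariant map of central extensions after the $\{\pm1\}$-ambiguity is absorbed — in other words, checking that "$\phi$ changes the Heisenberg group only by the central subextension $\overline{K}_v^\times$'' in a way that is invisible to the connecting map. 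Everything else is either standard or a direct appeal to \cite{poonenrain, KMR}.
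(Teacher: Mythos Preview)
Your proposal is correct and follows essentially the same approach as the paper: the paper's proof simply cites \cite[Corollary 4.7]{poonenrain} for (i), \cite[Proposition 4.9]{poonenrain} for (ii), deduces (iii) from (i) and (ii), cites \cite[Lemma 5.2]{KMR} for (iv), and deduces (v) from (ii) and (iv). Your sketches expand on exactly what those references establish, and you correctly identify (iv) as the step requiring the most care, which is precisely the assertion the paper delegates to \cite{KMR}.
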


\begin{proof}
The assertion (i) is \cite[Corollary 4.7]{poonenrain}. The assertion (ii) follows from \cite[Proposition 4.9]{poonenrain}. (iii) is an easy consequence of (i) and (ii). The assertion (iv) is proved in \cite[Lemma 5.2]{KMR} and (v) follows from (ii) and (iv). 
\end{proof}

\begin{lem}
\label{brauer}
Suppose that $x \in H^1(K, C[2])$. Then
$$
\sum_{v} q_{_{C,v}}(\mathrm{res}_v(x)) = 0.
$$
\end{lem}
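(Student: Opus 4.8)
The plan is to recognize $\sum_v q_{_{C,v}}(\res_v(x))$ as the image of a global class under the sum of local invariant maps, and then invoke the fundamental exact sequence of global class field theory, which says that $\sum_v \mathrm{inv}_v$ vanishes on the image of $H^2(K,\overline{K}^\times)$ in $\bigoplus_v H^2(K_v,\overline{K}_v^\times)$.

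First I would use the functoriality of the Heisenberg construction noted just before Definition (the connecting map $q_{\H_{C,L}}$ is functorial with respect to base extension). Applied to the local restriction maps $H^1(K,C[2])\to H^1(K_v,C[2])$, this gives a commutative square for each $v$, so that $q_{\H_{C,K_v}}(\res_v(x))$ equals the image of the global class $q_{\H_{C,K}}(x)\in H^2(K,\overline{K}^\times)$ under the restriction $H^2(K,\overline{K}^\times)\to H^2(K_v,\overline{K}_v^\times)$. Composing with $\mathrm{inv}_v$, we get $q_{_{C,v}}(\res_v(x)) = \mathrm{inv}_v(\res_v(y))$ where $y := q_{\H_{C,K}}(x)\in H^2(K,\overline{K}^\times) = \mathrm{Br}(K)$.

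Next I would observe that $y$ lies in $\mathrm{Br}(K)$ and that all but finitely many of its local invariants vanish (it is split at all but finitely many places, since the cocycle representing $x$ is unramified outside a finite set, forcing $y$ to be locally trivial there). Then the exact sequence
\begin{equation*}
0 \too \mathrm{Br}(K) \too \bigoplus_v \mathrm{Br}(K_v) \xrightarrow{\;\sum_v \mathrm{inv}_v\;} \Q/\Z \too 0
\end{equation*}
from global class field theory gives $\sum_v \mathrm{inv}_v(\res_v(y)) = 0$, which is exactly the claimed identity. Strictly, one should be slightly careful about the nonabelian cohomology bookkeeping: $q_{\H_{C,K}}$ is a priori only a map of pointed sets out of the nonabelian cohomology set, but its target $H^2(K,\overline{K}^\times) = H^2(K,\overline{K}(C)^\times$ in degree two$)$ lands in the genuine abelian Brauer group, and the diagram chase only requires functoriality of this connecting map, which holds. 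An alternative, essentially equivalent route is to cite the same statement directly from \cite{poonenrain} (their treatment of the global quadratic form and the product formula for $q_{_{C,v}}$), but I prefer to give the class-field-theory argument explicitly.

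The main obstacle is not any deep input — it is purely the bookkeeping of making the functoriality precise for the nonabelian connecting map and confirming that the global class $y$ has only finitely many nonzero local invariants so that the sum in the Brauer exact sequence is actually finite and the vanishing applies. Once those two points are dispatched, the statement is an immediate consequence of the reciprocity law $\sum_v \mathrm{inv}_v = 0$.
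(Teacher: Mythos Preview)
Your argument is correct and is precisely the paper's proof: it cites the exact sequence $0 \to \mathrm{Br}(K) \to \bigoplus_v \mathrm{Br}(K_v) \to \Q/\Z \to 0$ and the functoriality of $q_{\H_{C,L}}$ with respect to base change, from which the vanishing of $\sum_v q_{_{C,v}}(\res_v(x))$ follows immediately. Your extra remarks about the nonabelian bookkeeping and the finiteness of the nonzero local invariants are valid refinements, but the paper simply records these two ingredients in one line.
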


\begin{proof}
We have an exact sequence (see \cite[Theorem 8.1.17]{cohomology} for reference)
$$
\xymatrix{
0 \ar[r] & \mathrm{Br}(K) \ar[r] & \bigoplus_v \mathrm{Br}(K_v) \ar^-{\oplus \mathrm{inv}_v}[r] & \Q/\Z \ar[r] & 0.
}
$$
The lemma follows from the functoriality.
\end{proof}

\begin{lem}
\label{numberoflagrangian}
Suppose that $(V, q)$ is a metabolic space such that $\dimtwo(V) = 2n$. Then for
a given Lagrangian subspace $X$ of $V$, there are exactly $2^{n(n-1)/2}$ Lagrangian subspaces that intersect $X$ trivially; i.e.,
$$
|\{ Y: Y\text{ is a Lagrangian subspace such that }Y \cap X = \{0\} \}| = 2^{n(n-1)/2}.
$$
\end{lem}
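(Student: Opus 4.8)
The plan is to count Lagrangian subspaces $Y$ with $Y \cap X = \{0\}$ by reducing to linear algebra over $\Ftwo$ relative to the fixed Lagrangian $X$. First I would pick a Lagrangian complement $Z$ of $X$ (one exists because the bilinear form $(\,,\,)_q$ is nondegenerate and $X$ is maximal isotropic, hence $X^\perp = X$ forces $\dim Z = n$ and the pairing restricts to a perfect pairing between $X$ and $Z$). Using this pairing, identify $Z \cong X^\vee$. Then any subspace $Y$ with $Y \cap X = \{0\}$ and $\dim Y = n$ is the graph of a linear map $T : Z \to X$, i.e. $Y = Y_T := \{ z + T(z) : z \in Z\}$.

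The key computation is to determine for which $T$ the graph $Y_T$ is Lagrangian. Writing $q$ in block form with respect to $V = X \oplus Z$: since $q$ vanishes on $X$ and on $Z$, for $x \in X$, $z \in Z$ we have $q(x + z) = (x, z)_q$ (the cross term), which under $Z \cong X^\vee$ is just the evaluation pairing. Now compute $q(z + T(z))$ for $z \in Z$: expanding via the defining property $q(v+w) = q(v) + q(w) + (v,w)_q$ gives $q(z + T(z)) = q(z) + q(T(z)) + (z, T(z))_q = 0 + 0 + \langle T(z), z\rangle$, where $\langle\,,\,\rangle$ denotes the evaluation pairing $X \times X^\vee \to \Ftwo$ read off from $(\,,\,)_q$. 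So $Y_T$ is isotropic for $q$ if and only if the quadratic form $z \mapsto \langle T(z), z \rangle$ on $Z \cong \Ftwo^n$ vanishes identically. One checks separately that $Y_T = Y_T^\perp$ is automatic once $\dim Y_T = n$ and $Y_T$ is isotropic for the bilinear form; and isotropy of $q$ implies isotropy of $(\,,\,)_q$ since $(v,v)_q = q(2v) - 2q(v) = 0$ always over $\Ftwo$ — wait, more carefully: $(v,w)_q = q(v+w) - q(v) - q(w)$, so for $v,w \in Y_T$, $(v,w)_q = 0$ follows from $q$ vanishing on $Y_T$. Hence $Y_T \subseteq Y_T^\perp$, and equality holds by dimension count. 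So the count of admissible $Y$ equals the number of linear maps $T : \Ftwo^n \to \Ftwo^n$ such that $z \mapsto \langle T z, z\rangle$ is the zero function.

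Now I translate this into a matrix count. Fixing dual bases, $T$ is an $n \times n$ matrix over $\Ftwo$ and $\langle Tz, z\rangle = z^{t} M z$ where $M$ is the matrix of $T$ in suitable coordinates. The function $z \mapsto z^t M z$ depends only on the diagonal of $M$ plus the \emph{symmetric part} $M + M^t$; indeed $z^t M z = \sum_i M_{ii} z_i^2 + \sum_{i<j}(M_{ij} + M_{ji}) z_i z_j = \sum_i M_{ii} z_i + \sum_{i<j}(M_{ij}+M_{ji}) z_i z_j$ over $\Ftwo$. This vanishes identically iff every diagonal entry $M_{ii} = 0$ and $M_{ij} = M_{ji}$ for all $i < j$, i.e. iff $M$ is symmetric with zero diagonal (an "alternating" matrix). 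The number of such matrices is $2^{\binom{n}{2}} = 2^{n(n-1)/2}$: one binary choice for each pair $i < j$, with the diagonal and the lower triangle then forced. This gives exactly $2^{n(n-1)/2}$ Lagrangian subspaces meeting $X$ trivially, as claimed.

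The main obstacle — really the only subtle point — is keeping the $\Ftwo$-quadratic-form bookkeeping honest: over $\Ftwo$ one must not conflate $q$ with its associated bilinear form (the form $q$ carries strictly more information, namely the "diagonal"), and the whole count $2^{n(n-1)/2}$ rather than $2^{n(n+1)/2}$ hinges precisely on the diagonal constraint $M_{ii}=0$ coming from $q(z + T(z)) = 0$ rather than merely $(z+T(z), z+T(z))_q = 0$. I would therefore state the graph parametrization and the identity $q(z + T(z)) = z^t M z$ carefully, and note that the standard reference \cite[Section 2]{poonenrain} on metabolic spaces over a field can be cited for the existence of the Lagrangian complement $Z$ and for the fact that such counts are independent of the choice of metabolic space of given dimension.
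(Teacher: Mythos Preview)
Your argument is correct. The paper itself does not give a proof: it simply cites \cite[Proposition~2.6(b),(c),(e)]{poonenrain} and says the lemma is immediate. Your write-up is therefore not the paper's approach but rather a self-contained unpacking of the cited result.

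The route you take---fix a Lagrangian complement $Z$, parametrize complements of $X$ by graphs $Y_T=\{z+T(z)\}$ of linear maps $T:Z\to X$, and translate the condition $q|_{Y_T}=0$ into the matrix condition that $z^tMz$ vanish identically on $\Ftwo^n$---is exactly the standard one, and your identification of the solution set with alternating (symmetric, zero-diagonal) matrices over $\Ftwo$ is correct and yields the count $2^{\binom{n}{2}}$. You are also right to stress that it is the vanishing of the quadratic form $q$, not merely of the bilinear form $(\,,\,)_q$, that forces the diagonal condition $M_{ii}=0$; over $\Ftwo$ this is precisely the difference between $2^{n(n-1)/2}$ and $2^{n(n+1)/2}$.

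One point to tighten: your parenthetical justification for the existence of the Lagrangian complement $Z$ does not actually prove existence, only that any such $Z$ has dimension $n$ and pairs perfectly with $X$. Existence is a separate (standard) fact---essentially a Witt-extension argument showing any Lagrangian in a metabolic space admits a Lagrangian complement---and you do eventually acknowledge this by pointing to \cite[Section~2]{poonenrain}. Since the lemma you are proving \emph{implies} existence (the count is $\ge 1$), you should make clear that you are importing this base case from the reference rather than deriving it, to avoid any appearance of circularity.
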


\begin{proof}
This is immediate from Proposition 2.6 (b),(c), and (e) in \cite{poonenrain}.
\end{proof}

\begin{prop}
\label{reslag}
Let $\l$ be a prime of $K$. Then $\res_\l(\Sel_2^\l(C)) \subset H^1(K_\l, C[2])$ is a Lagrangian subspace of $(H^1(K_\l, C[2]), q_{_{C,\l}})$.
\end{prop}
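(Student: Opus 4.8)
The plan is to show that $\res_\l(\Sel_2^\l(C))$ is isotropic for the local quadratic form $q_{_{C,\l}}$ and then count dimensions to conclude it is maximal isotropic (Lagrangian), using the nondegeneracy of the induced pairing on $H^1(K_\l, C[2])$.

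First I would verify isotropy. Take $x \in \Sel_2^\l(C)$; by definition $\res_v(x) \in \beta_{C,v}(1_v)$ for every place $v \neq \l$. By Lemma \ref{brauer}, $\sum_v q_{_{C,v}}(\res_v(x)) = 0$. For $v \neq \l$, since $\beta_{C,v}(1_v)$ is a Lagrangian (hence isotropic) subspace of $(H^1(K_v, C[2]), q_{_{C,v}})$ by Theorem \ref{listthm}(ii), we get $q_{_{C,v}}(\res_v(x)) = 0$. Therefore $q_{_{C,\l}}(\res_\l(x)) = 0$, so $\res_\l(\Sel_2^\l(C))$ is isotropic. Applying the same argument to $x, y \in \Sel_2^\l(C)$ and $x+y$ (or directly invoking that an isotropic subspace of a space with the associated bilinear form is automatically totally isotropic for that bilinear form) shows $\res_\l(\Sel_2^\l(C))$ is contained in its own orthogonal complement under the pairing \eqref{tld}, which by Theorem \ref{listthm}(i) is the bilinear form $(\,,\,)_{q_{_{C,\l}}}$.

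Next I would pin down the dimension. Write $W := \res_\l(\Sel_2^\l(C)) \subset H^1(K_\l, C[2])$, and set $2n := \dimtwo H^1(K_\l, C[2])$. By Theorem \ref{ptd}, the images of the restriction maps in the two exact sequences there are orthogonal complements, and in particular $\dimtwo(\Sel_2^\l(C)) - \dimtwo(\Sel_{2,\l}(C)) = \tfrac{1}{2}\dimtwo H^1(K_\l, C[2]) = n$. Now the kernel of $\res_\l$ restricted to $\Sel_2^\l(C)$ is exactly $\Sel_{2,\l}(C)$ (an element of $\Sel_2^\l(C)$ lying in the kernel of $\res_\l$ satisfies all local conditions including the trivial one at $\l$, hence lies in $\Sel_2(C)$ with trivial restriction at $\l$, i.e. in $\Sel_{2,\l}(C)$; conversely $\Sel_{2,\l}(C) \subset \Sel_2^\l(C)$ and maps to $0$). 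Hence $\dimtwo W = \dimtwo(\Sel_2^\l(C)) - \dimtwo(\Sel_{2,\l}(C)) = n$. Since $W$ is isotropic of dimension $n = \tfrac{1}{2}\dimtwo H^1(K_\l, C[2])$ inside a nondegenerate quadratic space, it equals its own orthogonal complement, and since $q_{_{C,\l}}$ vanishes on it, $W$ is a Lagrangian subspace.

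The only real subtlety — and the step I would be most careful about — is the passage from $q_{_{C,\l}}(\res_\l(x)) = 0$ for all $x$ to $W$ being totally isotropic for the bilinear form (so that $W \subseteq W^\perp$), and the clean identification $\ker(\res_\l|_{\Sel_2^\l(C)}) = \Sel_{2,\l}(C)$; both are routine but worth stating explicitly. Everything else is an immediate assembly of Lemma \ref{brauer}, Theorem \ref{listthm}, and Theorem \ref{ptd}.
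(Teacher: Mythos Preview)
Your proof is correct and follows essentially the same route as the paper: isotropy of $\res_\l(\Sel_2^\l(C))$ via Theorem~\ref{listthm}(ii) at $v\neq\l$ together with Lemma~\ref{brauer}, then the dimension count $\dimtwo(\res_\l(\Sel_2^\l(C)))=\tfrac{1}{2}\dimtwo H^1(K_\l,C[2])$ from Theorem~\ref{ptd}. The paper's version is just terser, citing Theorem~\ref{ptd} directly for the dimension rather than spelling out the kernel identification $\ker(\res_\l|_{\Sel_2^\l(C)})=\Sel_{2,\l}(C)$.
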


\begin{proof}
Theorem \ref{listthm}(ii) shows that for $v \neq \l$, we have $q_{_{C,v}}(\res_v(x)) = 0$. It follows that $q_{_{C, \l}}(\res_\l(x)) = 0$ by Lemma \ref{brauer}. Therefore, $\res_\l(\Sel_2^\l(C))$ is contained in its orthogonal complement in the pairing $( , )_{q_{_{C, \l}}}$ of Definition \ref{metabolic}, that is \eqref{tld} by Theorem \ref{listthm}(i). Since $\dimtwo(\res_\l(\Sel_2^\l(C))) = \frac{1}{2} \dimtwo(H^1(K_\l, C[2]))$ by Theorem \ref{ptd}, $\res_\l(\Sel_2^\l(C))$ is indeed equal to its orthogonal complement. 
\end{proof}

\begin{lem}
\label{2inftor}
Let $\l \nmid 2$ be a prime of $K$, where $C$ has good reduction. Let $\eta \in \Xset_\ram(K_\l)$. Then 
$$C^\eta(K_\l)[2^\infty] = C^\eta(K_\l)[2],$$ 
where  $C$ is naturally regarded as an elliptic curve over $K_\l$, and $C^\eta$ is the quadratic twists of $C$ by $\eta$.
\end{lem}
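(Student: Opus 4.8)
The plan is to work over the ramified quadratic extension over which $C^\eta$ becomes isomorphic to $C$. Set $L := \overline{K_\l}^{\ker\eta}$; since $\eta \in \Xset_\ram(K_\l)$, the extension $L/K_\l$ is ramified quadratic, and we let $\sigma$ generate $\Gal(L/K_\l)$, so that $\eta(\sigma) = -1$. By the local form of Remark \ref{canisom}, the canonical isomorphism $\phi : C \to C^\eta$ is defined over $L$ (as $\phi^\tau = \phi$ for $\tau \in G_L$) and satisfies $\phi^\sigma(P) = \eta(\sigma)\phi(P) = -\phi(P)$. A short computation with the Galois action — for $Q \in C(L)$ one has $\sigma(\phi(Q)) = \phi^\sigma(\sigma Q) = -\phi(\sigma Q) = \phi(-\sigma Q)$ — then shows that $\phi$ restricts to a group isomorphism
$$
\{\, Q \in C(L) : \sigma Q = -Q \,\} \;\isom\; C^\eta(K_\l),
$$
and in particular to an isomorphism on $2$-power torsion subgroups. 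Hence it suffices to prove that $\{\, Q \in C(L)[2^\infty] : \sigma Q = -Q \,\}$ is killed by $2$; I will in fact identify it with $C(L)[2]$.

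The crux is that $\sigma$ acts trivially on $C(L)[2^\infty]$. Since $\l \nmid 2$ the residue characteristic $p$ of $L$ is odd, and $C$ still has good reduction over $L$, so the kernel of the reduction map $C(L) \to \widetilde{C}(k_L)$ is the formal group $\widehat{C}(\m_L)$, which is pro-$p$ and therefore has no nontrivial $2$-power torsion. The reduction map is $\Gal(L/K_\l)$-equivariant, and because $L/K_\l$ is totally ramified the residue field $k_L$ coincides with that of $K_\l$ and $\sigma$ acts trivially on it, hence trivially on $\widetilde{C}(k_L)$. Therefore, for $Q \in C(L)[2^\infty]$, the point $\sigma Q - Q$ reduces to $0$, so it lies in $\widehat{C}(\m_L)$; being also $2$-power torsion, it must be $0$, i.e.\ $\sigma Q = Q$.

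Granting this, for $Q \in C(L)[2^\infty]$ the condition $\sigma Q = -Q$ becomes $Q = -Q$, that is $2Q = 0$; so $\{\, Q \in C(L)[2^\infty] : \sigma Q = -Q \,\} = C(L)[2]$, which is annihilated by $2$. Transporting back through $\phi$, the group $C^\eta(K_\l)[2^\infty]$ is isomorphic to $C(L)[2]$ and hence annihilated by $2$, whence $C^\eta(K_\l)[2^\infty] = C^\eta(K_\l)[2]$.

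I expect the only delicate bookkeeping to be the identification $C^\eta(K_\l) \cong \{\, Q \in C(L) : \sigma Q = -Q \,\}$ — keeping track of where the sign $\eta(\sigma) = -1$ enters and verifying $\Gal(L/K_\l)$-equivariance of reduction — while the substantive input is the vanishing of $2$-power torsion in $\widehat{C}(\m_L)$, which is exactly where the hypotheses $\l \nmid 2$, good reduction of $C$, and ramifiedness of $\eta$ are all used together.
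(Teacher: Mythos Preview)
Your argument is correct, but it takes a different route from the paper's. The paper passes all the way to the maximal unramified extension $K_\l^{\ur}$ and invokes the well-known consequence of good reduction at $\l\nmid 2$ that $C(K_\l^{\ur})[2^\infty]=C[2^\infty]$; then, since $\eta$ remains nontrivial on $G_{K_\l^{\ur}}$, any $P\in C^\eta(K_\l^{\ur})[2^\infty]$ (viewed via the canonical identification with $C[2^\infty]$) must satisfy $\eta(\sigma)P=P$ for all $\sigma\in G_{K_\l^{\ur}}$, forcing $2P=0$. Taking $\Gal(K_\l^{\ur}/K_\l)$-invariants gives the statement over $K_\l$.

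You instead stay in the finite ramified extension $L=\overline{K_\l}^{\ker\eta}$, identify $C^\eta(K_\l)$ with the $(-1)$-eigenspace of $\sigma$ in $C(L)$, and prove directly via the formal group that $\sigma$ acts trivially on $C(L)[2^\infty]$. This is really a hands-on proof of the special case of N\'eron--Ogg--Shafarevich you need; it makes the mechanism (no $2$-torsion in the formal group, trivial action on the residue field since $L/K_\l$ is totally ramified) fully explicit. The paper's version is shorter because it cites the unramifiedness of prime-to-$p$ torsion as a black box, while yours unpacks that black box inside the argument. Both approaches use the three hypotheses in exactly the same places.
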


\begin{proof}
Let $K_\l^\ur$ denote the maximal unramified extension of $K_\l$. We show 
$$C^\eta(K_\l^\ur)[2^\infty] = C^\eta(K_\l^\ur)[2],$$ 
then the result follows by taking the submodules of $\Gal(K_\l^\ur/K_\l)$-invariant elements on both sides. Under our assumption that $C$ has good reduction at $\l \nmid 2$, it is well-known that $C(K_\l^\ur)[2^\infty] = C[2^\infty]$. Thus, if $P \in C^\eta(K_\l^\ur)[2^\infty]$, then $\eta(\sigma)(P) = P$ for any $\sigma \in G_{K_\l^\ur}$ by the definition of quadratic twists. It follows that $2P = 0$, so $P \in C^\eta(K_\l^\ur)[2]$, where the assertion follows.
\end{proof}

\begin{lem}
\label{diff}
Let $\l \nmid 2$ be a prime of $K$, where $C$ has good reduction. Suppose that $C(K_\l)[2] = C[2]$. Let $\Xset_\ram(K_\l) = \{ \eta_1, \eta_2 \}$. Then $\beta_{C,\l}(1_\l), \beta_{C, \l}(\eta_1)$, and $\beta_{C, \l}(\eta_2)$ are Lagrangian subspaces of the metabolic space $(H^1(K_\l, C[2]), q_{_{C,\l}})$, whose pairwise intersections are trivial, i.e.,
$$
\beta_{C,\l}(1_\l) \cap \beta_{C, \l}(\eta_1) = \beta_{C,\l}(1_\l) \cap \beta_{C, \l}(\eta_2)  = \beta_{C,\l}(\eta_1) \cap \beta_{C, \l}(\eta_2) = \{0\}.
$$ 
\end{lem}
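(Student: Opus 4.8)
\emph{Proof sketch.}
The plan is to verify the three claims in turn. That each of $\beta_{C,\l}(1_\l)$, $\beta_{C,\l}(\eta_1)$, $\beta_{C,\l}(\eta_2)$ is a Lagrangian subspace of the metabolic space $(H^1(K_\l,C[2]),q_{_{C,\l}})$ is immediate from Theorem \ref{listthm}: part (ii) covers $1_\l$ and part (v) covers $\eta_1,\eta_2$. Since $\eta_1,\eta_2\in\Xset_\ram(K_\l)$ and $C$ has good reduction at $\l$, Lemma \ref{ramhv} gives $\beta_{C,\l}(1_\l)\cap\beta_{C,\l}(\eta_1)=\beta_{C,\l}(1_\l)\cap\beta_{C,\l}(\eta_2)=\{0\}$ at once. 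So the only assertion requiring work is the remaining intersection $\beta_{C,\l}(\eta_1)\cap\beta_{C,\l}(\eta_2)=\{0\}$, and my plan for this is a direct $2$-descent computation.

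Because $C/K_\l$ has good reduction at $\l\nmid2$ and $C(K_\l)[2]=C[2]$, one may take the minimal Weierstrass model in the form $y^2=(x-e_1)(x-e_2)(x-e_3)$ with $e_j\in\O_{K_\l}$; then $\prod_{j<k}(e_j-e_k)^2$ is a unit (being, up to a unit, the minimal discriminant), and as the $e_j-e_k$ are integral this forces $e_j-e_k\in\O_{K_\l}^\times$ for $j\neq k$. Pick $d_i\in K_\l^\times$ with $\eta_i\leftrightarrow d_i$; then $d_1,d_2$ have odd valuation and lie in distinct square classes, and $C^{\eta_i}$ has model $y^2=(x-d_ie_1)(x-d_ie_2)(x-d_ie_3)$, the canonical isomorphism $C[2]\isom C^{\eta_i}[2]$ of Remark \ref{canisom} sending $(e_j,0)$ to $(d_ie_j,0)$. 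Using the bases $\{(e_1,0),(e_2,0)\}$ and $\{(d_ie_1,0),(d_ie_2,0)\}$ together with the Weil pairing (which the canonical isomorphism preserves), $H^1(K_\l,C[2])$ and $H^1(K_\l,C^{\eta_i}[2])$ get identified with $(K_\l^\times/(K_\l^\times)^2)^2$ compatibly with that isomorphism, and the classical complete $2$-descent (Silverman, \emph{The Arithmetic of Elliptic Curves}, Prop.\ X.1.4) identifies the Kummer images of $(d_ie_1,0)$ and $(d_ie_2,0)$ with
\[
\xi_i=\big((e_1-e_2)(e_1-e_3),\,d_i(e_1-e_2)\big),\qquad \zeta_i=\big(d_i(e_2-e_1),\,(e_2-e_1)(e_2-e_3)\big)
\]
modulo squares. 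The coordinates of $\xi_i$ have valuations $(0,1)$ and those of $\zeta_i$ have valuations $(1,0)$ modulo $2$, so $\xi_i,\zeta_i$ are $\Ftwo$-independent; since $\dimtwo\beta_{C,\l}(\eta_i)=\dimtwo(C^{\eta_i}(K_\l)/2C^{\eta_i}(K_\l))=\dimtwo C^{\eta_i}(K_\l)[2]=2$ by Lemma \ref{locdim2torsion} (applied to $C^{\eta_i}/K_\l$, noting $C^{\eta_i}[2]\cong C[2]$ is entirely $G_{K_\l}$-fixed), we conclude that $\beta_{C,\l}(\eta_1)$ and $\beta_{C,\l}(\eta_2)$ are the $\Ftwo$-spans of $\{\xi_1,\zeta_1\}$ and $\{\xi_2,\zeta_2\}$, both viewed inside the same copy of $(K_\l^\times/(K_\l^\times)^2)^2$.

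The conclusion is then elementary linear algebra over $\Ftwo$: if $a\xi_1+b\zeta_1=a'\xi_2+b'\zeta_2$, comparing the parity of the valuation in the first coordinate gives $b=b'$ and in the second gives $a=a'$, after which comparing the actual square classes in the two coordinates gives $b[d_1d_2]=a[d_1d_2]=0$ in $K_\l^\times/(K_\l^\times)^2$; since $d_1,d_2$ lie in distinct square classes, $[d_1d_2]\neq0$, forcing $a=b=0$. Hence $\beta_{C,\l}(\eta_1)\cap\beta_{C,\l}(\eta_2)=\{0\}$, as desired.

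The main obstacle is the bookkeeping in the middle step: pinning down the model so that the $e_j-e_k$ are actually units, tracking square classes (especially factors of $-1$) in the $2$-descent formula, and checking that the two coordinatizations of $H^1$ agree under the canonical isomorphism $C[2]\cong C^{\eta_i}[2]$. A more conceptual but less self-contained route to the last intersection: Lemma \ref{numberoflagrangian} (with $n=2$) shows there are exactly two Lagrangians meeting $\beta_{C,\l}(1_\l)$ trivially, and a short computation shows any two such are complementary in a four-dimensional metabolic space; as $\beta_{C,\l}(\eta_1),\beta_{C,\l}(\eta_2)$ are both among them (Lemma \ref{ramhv}), it then remains only to see $\beta_{C,\l}(\eta_1)\neq\beta_{C,\l}(\eta_2)$, i.e.\ $h_{C^{\eta_1},\l}(\eta_1\eta_2)\neq0$ for the unramified character $\eta_1\eta_2$, which follows from the Néron model of $C^{\eta_1}$ (additive, type $I_0^*$, component group $(\Z/2)^2$ with trivial Galois action).
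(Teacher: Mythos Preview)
Your proof is correct. The opening reductions (Theorem \ref{listthm} for the Lagrangian property, Lemma \ref{ramhv} for the two intersections involving $\beta_{C,\l}(1_\l)$) match the paper exactly. For the remaining intersection $\beta_{C,\l}(\eta_1)\cap\beta_{C,\l}(\eta_2)$, however, you take a genuinely different route. The paper does not compute explicit $2$-descent images; instead it invokes an external criterion (\cite[Lemma 2.16]{mj}) which reduces the claim to showing that the inclusion $C^{\eta_1}(K_\l)\hookrightarrow C^{\eta_1}(F_\l)$ induces an isomorphism modulo $2$, where $F_\l$ is the unramified quadratic extension of $K_\l$; this in turn follows from Lemma \ref{2inftor} applied over both $K_\l$ and $F_\l$, which forces both quotients to equal $C^{\eta_1}[2]$. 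Your explicit computation via Silverman's Proposition X.1.4 is more hands-on but entirely self-contained, avoiding the outside reference; the bookkeeping you flag (units $e_j-e_k$, compatibility of the two coordinatizations via the Weil pairing, which the canonical isomorphism preserves) checks out, and the final $\F_2$-linear algebra is clean. The alternative you sketch at the end (two Lagrangians complementary to $\beta_{C,\l}(1_\l)$ are automatically complementary to each other in a four-dimensional metabolic space, so one only needs $\beta_{C,\l}(\eta_1)\neq\beta_{C,\l}(\eta_2)$) is also valid, though the appeal to the $I_0^*$ N\'eron model to separate the two unramified-twist local conditions would need a line or two more to be airtight, since Lemma \ref{localzero} does not cover that reduction type.
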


\begin{proof}
By Theorem \ref{listthm}(v), we have that $\beta_{C,\l}(1_\l), \beta_{C, \l}(\eta_1)$, and $\beta_{C, \l}(\eta_2)$ are Lagrangian subspaces. By Lemma \ref{ramhv}, it only remains to prove that 
$$\beta_{C,\l}(\eta_1) \cap \beta_{C, \l}(\eta_2) = \{0\}.$$ 
Let $F_\l$ be the unramified quadratic extension of $K_\l$. Then by \cite[Lemma 2.16]{mj}, it is enough to show that the natural injection $C^{\eta_1}(K_\l) \hookrightarrow C^{\eta_1}(F_\l)$ induces
\begin{equation}
\label{wts}
C^{\eta_1}(K_\l)/2C^{\eta_1}(K_\l) \cong C^{\eta_1}(F_\l)/2C^{\eta_1}(F_\l).
\end{equation}
By the isomorphism in the proof of Lemma \ref{locdim2torsion} and Lemma \ref{2inftor}, it follows that
$$
C^{\eta_1}(K_\l)/2C^{\eta_1}(K_\l) \cong C^{\eta_1}(K_\l)[2^\infty]/2C^{\eta_1}(K_\l)[2^\infty] =  C^{\eta_1}(K_\l)[2] = C^{\eta_1}[2],
$$
where we get the last equality by the assumption $C(K_\l)[2] = C[2]$ and the canonical isomorphism $C[2] \cong C^{\eta_1}[2]$. Similary,
$$
C^{\eta_1}(F_\l)/2C^{\eta_1}(F_\l) \cong C^{\eta_1}[2],
$$ 
whence \eqref{wts}.
\end{proof}

\begin{prop}
\label{crucial}
Suppose that $\l\nmid 2$. Suppose that $C$ has good reduction at $\l$ and $C(K_\l)[2] = C[2]$. Then
\begin{enumerate}
\item
If $\res_\l(\Sel_2(C)) = 0$, then there exists $\psi_\l \in \Xset_\ram(K_\l)$ so that $\res_\l(\Sel_2^\l(C)) = \beta_{C,\l}(\psi_\l)$, 
\item
If $\res_\l(\Sel_2(C)) = 0$ and $\res_\l(\Sel_2(C, \psi_\l)) \neq 0$ for some $\psi_\l \in \Xset_\ram(K_\l)$, then $\res_\l(\Sel_2^\l(C)) = \beta_{C, \l}(\psi_\l)$, so $\Sel_2^\l(C) = \Sel_2(C, \psi_\l)$. 
\item
If there is $s \in \Sel_2(C)$ such that $\res_\l(s) \neq 0$, then for any $\eta_\l \in \Xset_\ram(K_\l)$, we have $\dimtwo(\Sel_2(C, \eta_\l)) \le r_2(C)$,
\end{enumerate}
\end{prop}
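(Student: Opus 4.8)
The plan is to work entirely inside the metabolic space $(H^1(K_\l, C[2]), q_{_{C,\l}})$, using that by Lemma \ref{diff} the three spaces $\beta_{C,\l}(1_\l)$, $\beta_{C,\l}(\eta_1)$, $\beta_{C,\l}(\eta_2)$ are pairwise-trivially-intersecting Lagrangians (here $\Xset_\ram(K_\l) = \{\eta_1,\eta_2\}$), and that $\res_\l(\Sel_2^\l(C))$ is itself a Lagrangian by Proposition \ref{reslag}. Write $V = H^1(K_\l, C[2])$, $L_0 = \beta_{C,\l}(1_\l)$, $R = \res_\l(\Sel_2^\l(C))$, and note $\dimtwo V = 2n$ with $n = \dimtwo C[2] = 2$ since $C(K_\l)[2] = C[2]$ (so $\dimtwo L_0 = 2$). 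The two exact sequences of Theorem \ref{ptd} give $\Sel_{2,\l}(C) = \Sel_2^\l(C) \cap (\res_\l)^{-1}(0)$ and $\res_\l(\Sel_2(C)) = R \cap L_0$, together with $\dimtwo R - \dimtwo\Sel_{2,\l}(C) = n = 2$ and $\dimtwo\Sel_2(C) - \dimtwo\Sel_{2,\l}(C) = \dimtwo(R\cap L_0)$.

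For (i): assume $\res_\l(\Sel_2(C)) = 0$, i.e.\ $R \cap L_0 = \{0\}$. By Lemma \ref{numberoflagrangian} with $n = 2$ there are exactly $2^{1} = 2$ Lagrangians meeting $L_0$ trivially; by Lemma \ref{diff} these are precisely $\beta_{C,\l}(\eta_1)$ and $\beta_{C,\l}(\eta_2)$. Since $R$ is a Lagrangian meeting $L_0$ trivially, $R = \beta_{C,\l}(\psi_\l)$ for some $\psi_\l \in \Xset_\ram(K_\l)$, which is exactly (i). For (ii): with $\psi_\l$ as produced in (i) — or, more precisely, note that for $\psi_\l\in\Xset_\ram(K_\l)$ the space $\Sel_2(C,\psi_\l)$ fits between $\Sel_{2,\l}(C)$ and $\Sel_2^\l(C)$ via $\Sel_2(C,\psi_\l) = \{x\in\Sel_2^\l(C): \res_\l(x)\in\beta_{C,\l}(\psi_\l)\}$, so $\res_\l(\Sel_2(C,\psi_\l)) = R \cap \beta_{C,\l}(\psi_\l)$. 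If this is nonzero, then since $R$ and $\beta_{C,\l}(\psi_\l)$ are both Lagrangians (dimension $2$) with nontrivial intersection inside $V$, and the only Lagrangians meeting $L_0$ trivially are $\beta_{C,\l}(\eta_1),\beta_{C,\l}(\eta_2)$ which meet each other trivially, one deduces $R = \beta_{C,\l}(\psi_\l)$: indeed $R\cap L_0 = \{0\}$ forces $R\in\{\beta_{C,\l}(\eta_1),\beta_{C,\l}(\eta_2)\}$ by the counting argument of (i), and $R\cap\beta_{C,\l}(\psi_\l)\neq\{0\}$ together with Lemma \ref{diff} rules out $R = \beta_{C,\l}(\eta_j)$ for $\eta_j\neq\psi_\l$. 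Hence $R = \beta_{C,\l}(\psi_\l)$, and then $\Sel_2^\l(C) = \Sel_2(C,\psi_\l)$ since both have the same image $R$ under $\res_\l$ and the same kernel $\Sel_{2,\l}(C)$.

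For (iii): suppose some $s\in\Sel_2(C)$ has $\res_\l(s)\neq 0$, i.e.\ $R\cap L_0\neq\{0\}$, so $\dimtwo(R\cap L_0)\ge 1$ and $\dimtwo\Sel_2(C) = \dimtwo\Sel_{2,\l}(C) + \dimtwo(R\cap L_0) \ge \dimtwo\Sel_{2,\l}(C) + 1$. On the other hand, for any $\eta_\l\in\Xset_\ram(K_\l)$ we have $\dimtwo\Sel_2(C,\eta_\l) = \dimtwo\Sel_{2,\l}(C) + \dimtwo(R\cap\beta_{C,\l}(\eta_\l))$, and $R\cap\beta_{C,\l}(\eta_\l)$ has dimension at most $\dimtwo\beta_{C,\l}(\eta_\l) - \dimtwo(\text{something})$; the clean bound is $\dimtwo(R\cap\beta_{C,\l}(\eta_\l))\le 1$, which follows because $R$, $L_0$, $\beta_{C,\l}(\eta_\l)$ are Lagrangians with $R\cap L_0$ and $L_0\cap\beta_{C,\l}(\eta_\l) = \{0\}$ sitting in the $4$-dimensional $V$: if $\dimtwo(R\cap\beta_{C,\l}(\eta_\l)) = 2$ then $R = \beta_{C,\l}(\eta_\l)$, forcing $R\cap L_0 = \{0\}$, contradicting $R\cap L_0\neq\{0\}$. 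Therefore $\dimtwo\Sel_2(C,\eta_\l)\le\dimtwo\Sel_{2,\l}(C) + 1\le\dimtwo\Sel_2(C) = r_2(C)$, as claimed.

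The main obstacle I anticipate is handling the Lagrangian-intersection bookkeeping cleanly — in particular pinning down in (ii) that a Lagrangian meeting $L_0$ trivially and meeting $\beta_{C,\l}(\psi_\l)$ nontrivially must equal $\beta_{C,\l}(\psi_\l)$ — which rests on combining the exact count of Lemma \ref{numberoflagrangian} (giving exactly two such Lagrangians) with the pairwise-triviality from Lemma \ref{diff}; one should double-check the edge case $n=2$ makes $2^{n(n-1)/2}=2$ so that $\{\beta_{C,\l}(\eta_1),\beta_{C,\l}(\eta_2)\}$ exhausts all options, and verify the dimension count $\dimtwo V = 4$ really does come from $C(K_\l)[2]=C[2]$.
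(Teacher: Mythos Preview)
Your proposal is correct and follows essentially the same route as the paper's proof: for (i) and (ii) you use Proposition \ref{reslag}, Lemma \ref{diff}, and the exact count $2^{n(n-1)/2}=2$ from Lemma \ref{numberoflagrangian} to pin down $\res_\l(\Sel_2^\l(C))$ among the two Lagrangians transverse to $\beta_{C,\l}(1_\l)$, and for (iii) you bound $\dimtwo(R\cap\beta_{C,\l}(\eta_\l))\le 1$ and $\dimtwo(R\cap L_0)\ge 1$ inside the $2$-dimensional quotient $\Sel_2^\l(C)/\Sel_{2,\l}(C)$. The paper's argument for (iii) is phrased slightly differently---it sets $X=\Sel_2(C)/\Sel_{2,\l}(C)$, $Y=\Sel_2(C,\eta_\l)/\Sel_{2,\l}(C)$ and uses Lemma \ref{ramhv} directly to get $X\cap Y=\{0\}$ inside a space of dimension $\le 2$---but this is the same content as your transversality bound, just repackaged.
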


\begin{proof}
Lemma \ref{locdim2torsion} and Theorem \ref{ptd} show
$$\dimtwo(\Sel_2^\l(C)) - \dimtwo(\Sel_{2, \l}(C)) = 2 \text{ and } \dimtwo(\res_\l(\Sel_2^\l(C))) = 2.$$ 
For (iii), we first note that
$$
\Sel_{2, \l}(C) \subset \Sel_2(C), \Sel_2(C, \eta_\l) \subset \Sel_{2}^\l(C).
$$
Let 
\begin{align*}
X & := \Sel_2(C)/\Sel_{2, \l}(C), \text{ and }\\
Y & := \Sel_2(C, \eta_\l)/ \Sel_{2, \l}(C),
\end{align*}
for convenience. Clearly $\dimtwo(X+Y) \le 2$ and Lemma \ref{ramhv} implies $X \cap Y = \{0\}$. The assumption of (iii) shows $\dimtwo(X) \ge 1$, so $\dimtwo(Y) \le 1$, where (iii) follows. We now show (i) and (ii). Let $\Xset_\ram(K_\l)=\{\eta_1, \eta_2\}$. Since $\res_\l(\Sel_2(C)) = 0$, we have
$$
\beta_{C, \l}(1_\l) \cap \res_\l(\Sel_2^\l(C)) = \{ 0\}. 
$$
Thus, $\beta_{C,\l}(\eta_1), \beta_{C,\l}(\eta_2)$, and $\res_\l(\Sel_2^\l(C))$ are Lagrangian subspace of $(H^1(K_\l, C[2]), q_{_{C, \l}})$, whose intersections with $\beta_{C, \l}(1_\l)$ are trivial. However, Lemma \ref{numberoflagrangian} asserts that there are only $2$ such Lagrangian subspaces. Hence by Lemma \ref{diff}, we conclude that
$$ 
\res_\l(\Sel_2^\l(C)) = \beta_{C,\l}(\eta_1) \text{ or } \res_\l(\Sel_2^\l(C)) = \beta_{C,\l}(\eta_2).
$$
Now it is easy to see (i) and (ii) by Lemma \ref{diff} again. 
\end{proof}


\section{Case 1: $[M:K], [M':K]$ are divisible by $3$}
In this section, we assume that $\Gal(M/K) \cong S_3$ or $\Z/3\Z$, and $\Gal(M'/K) \cong S_3$ or $\Z/3\Z$. Recall that we suppose $M \neq M'$ in this paper. 
Let $\Delta_E, \Delta_A$ denote the discriminants of models of $E$, $A$, respectively. Let $S$ be a set of places of $K$ defined as in the beginning of section $2$. Enlarge $S$ if necessary, so that $\Delta_E, \Delta_A \in \O_{K,S}^\times$.  

\begin{lem}
\label{eveni}
If $i= 0$ or $2$, then for every $\l \in \cP_{E,i}$, we have $\Delta_E \in (\O_\l^\times)^2$. 
\end{lem}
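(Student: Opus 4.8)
The plan is to reinterpret ``$\Delta_E \in (\O_\l^\times)^2$'' as the statement that $\l$ splits completely in the quadratic extension $K(\sqrt{\Delta_E})/K$, and then to detect this from $\Frob_\l$ using Lemma \ref{cPFroborder}. The key input is the classical description of $K(\sqrt{\Delta_E})$ inside $M$: completing the square in a Weierstrass model of $E$ (legitimate since $\mathrm{char}\,K = 0$), write $E : y^2 = g(x)$ with $g$ a monic cubic, and let $e_1, e_2, e_3 \in M$ be its roots, so that $M = K(e_1,e_2,e_3)$ and the action of $\Gal(M/K)$ on $\{e_1,e_2,e_3\}$ realizes the embedding $\Gal(M/K) \hookrightarrow \Aut(E[2]) \cong S_3$. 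A routine discriminant computation gives $\Delta_E = 16\,\mathrm{disc}(g) = 16\prod_{i<j}(e_i-e_j)^2$, so $K(\sqrt{\Delta_E}) = K\big(\prod_{i<j}(e_i-e_j)\big)$; and since $\sigma \in \Gal(M/K)$ sends $\prod_{i<j}(e_i-e_j)$ to $\mathrm{sgn}(\sigma)\prod_{i<j}(e_i-e_j)$, the image of $\sigma$ in $\Gal(K(\sqrt{\Delta_E})/K)$ is trivial if and only if $\sigma$ acts on $\{e_1,e_2,e_3\}$ by an even permutation.

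With this in hand the proof is short. Since $\l \notin S$, the curve $E$ has good reduction at $\l$ and $\l \nmid 2$, so $M/K$ is unramified at $\l$ and $\Frob_\l|_M$ makes sense. If $\l \in \cP_{E,0}$ then, by Lemma \ref{cPFroborder}, $\Frob_\l|_M$ has order $3$, hence is a $3$-cycle, an even permutation; if $\l \in \cP_{E,2}$ then $\Frob_\l|_M$ is the identity, trivially even. In either case $\Frob_\l$ acts trivially on $K(\sqrt{\Delta_E})$, so $\l$ splits completely there and $\Delta_E \in (K_\l^\times)^2$. Finally, $\Delta_E \in \O_{K,S}^\times$ and $\l \notin S$ force $v_\l(\Delta_E) = 0$, so any $u \in K_\l^\times$ with $u^2 = \Delta_E$ has $v_\l(u) = 0$, i.e.\ $u \in \O_\l^\times$; hence $\Delta_E \in (\O_\l^\times)^2$, as desired.

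The argument is essentially bookkeeping, and I do not anticipate a real obstacle. The only point deserving a moment's care is the identification $K(\sqrt{\Delta_E}) = K\big(\prod_{i<j}(e_i-e_j)\big)$, i.e.\ that the discriminant of a Weierstrass model and the discriminant of the associated monic cubic differ by a perfect square; the discrepancy is $16 = 4^2$ (and, more generally, changing the model multiplies $\Delta_E$ by a twelfth power, again a square), so it does not affect the quadratic extension $K(\sqrt{\Delta_E})$. Note also that the $i=0$ part is only nonvacuous when $\Gal(M/K)$ contains an element of order $3$, which is exactly the standing hypothesis of this section.
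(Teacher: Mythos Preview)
Your proof is correct and follows essentially the same route as the paper's: both identify $K(\sqrt{\Delta_E})$ as the (unique, possibly trivial) quadratic subextension of $M/K$, invoke Lemma~\ref{cPFroborder} to see that $\Frob_\l|_M$ has order $1$ or $3$ and hence fixes $\sqrt{\Delta_E}$, and then use $\Delta_E \in \O_{K,S}^\times$ to pass from $(K_\l^\times)^2$ to $(\O_\l^\times)^2$. The only difference is presentational: the paper argues abstractly that an element of odd order in $\Gal(M/K)$ lies in any index-$2$ subgroup, whereas you make the same point concretely via the sign of the permutation on the roots $e_1,e_2,e_3$.
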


\begin{proof}
It is easy to see $K(\sqrt{\Delta_E})$ is the only (possibly trivial) quadratic extension over $K$ in $M$. Then Lemma \ref{cPFroborder} shows that $\Frob_\l|_M \in \Gal(M/K)$ fixes $\sqrt{\Delta_E}$, hence $\sqrt{\Delta_E} \in K_\l^\times$. Since $\Delta_E \in \O_{K,S}^\times$, the result follows immediately.  
\end{proof}
We recall $\cP_0 = \cP_{E,0} \cap \cP_{A,0}$. 
\begin{lem}
\label{kernelofthemaptopzero}
Define $\A \subset K^\times/(K^\times)^2$ by
$$
\A := \ker(\O_{K,  S}^\times /(\O_{K,  S}^\times)^2 \to \prod_{\l \in \cP_0}\O_\l^\times/(\O_\l^\times)^2).
$$
Then $\A$ is generated by $\Delta_E$ and $\Delta_A$ (they are possibly the same or even trivial). 
\end{lem}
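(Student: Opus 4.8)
The plan is to prove the two inclusions $\langle\Delta_E,\Delta_A\rangle\subseteq\A$ and $\A\subseteq\langle\Delta_E,\Delta_A\rangle$, where throughout we regard $\O_{K,S}^\times/(\O_{K,S}^\times)^2$ as a subgroup of $K^\times/(K^\times)^2$ (legitimate, since an $S$-unit which is a square in $K^\times$ is the square of an $S$-unit). The inclusion $\langle\Delta_E,\Delta_A\rangle\subseteq\A$ is immediate: as $\cP_0\subseteq\cP_{E,0}$, Lemma \ref{eveni} gives $\Delta_E\in(\O_\l^\times)^2$ for every $\l\in\cP_0$, so the class of $\Delta_E$ lies in $\A$, and the same argument with $A$ in place of $E$ handles $\Delta_A$.

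For the reverse inclusion I would argue by contraposition: given $b\in\O_{K,S}^\times$ whose class in $K^\times/(K^\times)^2$ does not lie in $\langle\Delta_E,\Delta_A\rangle$, I want to produce a prime $\l\in\cP_0$ with $b\notin(\O_\l^\times)^2$. The first step is a small piece of group theory. Write $H=\Gal(MM'/K)$, which we regard inside $\Gal(M/K)\times\Gal(M'/K)$; by the hypothesis of this section each factor is $S_3$ or $\Z/3\Z$. In either of these groups the squares are exactly the elements of order dividing $3$; hence an element $\sigma\in H$ has $\sigma|_M$ and $\sigma|_{M'}$ both of order dividing $3$ if and only if $\sigma$ has odd order, if and only if $\sigma\in H^2$. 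Since $\sqrt{\Delta_E}$ (resp. $\sqrt{\Delta_A}$) generates the unique, possibly trivial, quadratic subextension of $M/K$ (resp. $M'/K$), the subgroup of $H$ fixing both $\sqrt{\Delta_E}$ and $\sqrt{\Delta_A}$ is precisely $H^2$; therefore every quadratic subextension of $MM'/K$ — corresponding to an index-$2$ subgroup, which automatically contains $H^2$ — is contained in $K(\sqrt{\Delta_E},\sqrt{\Delta_A})$. By the choice of $b$ and Kummer theory, $K(\sqrt b)\not\subseteq K(\sqrt{\Delta_E},\sqrt{\Delta_A})$, so $K(\sqrt b)$ is not a subextension of $MM'/K$; in particular $[MM'(\sqrt b):MM']=2$.

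Next, set $N:=MM'(\sqrt b)$, a Galois extension of $K$ unramified outside $S$ (as $b$ is an $S$-unit and $E[2],A[2]$ are unramified outside $S$), and let $\tau$ be the nontrivial element of $\Gal(N/MM')$. By Remark \ref{33} there is $\sigma_0\in\Gal(MM'/K)$ with $\sigma_0|_M$ and $\sigma_0|_{M'}$ of order $3$; I would lift $\sigma_0$ to $N$ and, if the lift fixes $\sqrt b$, replace it by its product with $\tau$, whose restriction to $MM'$ is unchanged while $\sqrt b\mapsto-\sqrt b$. This yields $\sigma\in\Gal(N/K)$ with $\sigma|_M$ of order $3$, $\sigma|_{M'}$ of order $3$, and $\sigma(\sqrt b)=-\sqrt b$. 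The Chebotarev density theorem then supplies infinitely many primes $\l\notin S$ with $\Frob_\l|_N$ in the conjugacy class of $\sigma$; for any such $\l$, Lemma \ref{cPFroborder} gives $\l\in\cP_{E,0}\cap\cP_{A,0}=\cP_0$, while $\Frob_\l$ acts nontrivially on $K(\sqrt b)/K$, so $b\notin(K_\l^\times)^2$, hence (as $\l\nmid 2$ and $b$ is a unit at $\l$) $b\notin(\O_\l^\times)^2$. Thus $b\notin\A$, which is what was needed.

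The only delicate point is the group-theoretic step identifying $H^2$ with the fixer of $\{\sqrt{\Delta_E},\sqrt{\Delta_A}\}$, which is what pins down the quadratic subfields of $MM'$; the rest is a routine lifting-and-Chebotarev argument. One should also dispatch the minor bookkeeping: the injection $\O_{K,S}^\times/(\O_{K,S}^\times)^2\hookrightarrow K^\times/(K^\times)^2$ used above, and the observation that since $K(\sqrt b)/K$ is abelian, the condition "$\Frob_\l$ is nontrivial on $K(\sqrt b)$" depends only on the conjugacy class of $\Frob_\l|_N$, so it is legitimate to read it off from $\sigma$.
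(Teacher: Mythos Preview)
Your proof is correct and follows essentially the same approach as the paper's: show $\Delta_E,\Delta_A\in\A$ via Lemma~\ref{eveni}, then for $b\notin\langle\Delta_E,\Delta_A\rangle$ use that $K(\sqrt{b})$ is linearly disjoint from $MM'$ to build, via Chebotarev, a prime $\l\in\cP_0$ at which $b$ is a non-square. The only difference is cosmetic: the paper simply asserts the linear disjointness of $MM'$ and $K(\sqrt\alpha)$ and invokes Remark~\ref{33} directly, whereas you spell out the group theory (identifying the odd-order elements of $\Gal(MM'/K)$ with the fixer of $\sqrt{\Delta_E},\sqrt{\Delta_A}$, hence pinning down the quadratic subfields of $MM'$) and explicitly lift-and-adjust by $\tau$ to get the desired Frobenius class in $\Gal(MM'(\sqrt b)/K)$.
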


\begin{proof}
By Lemma \ref{eveni}, clearly $\Delta_E, \Delta_A \in \A$. Suppose $\alpha \in \O_{K,  S}^\times /(\O_{K,  S}^\times)^2$ is not generated by $\Delta_E,\Delta_A$. Then $MM'$ and $K(\sqrt{\alpha})$ are linearly independent over $K$, i.e., $MM'\cap K(\sqrt{\alpha}) = K.$ By Remark \ref{33}, there exists $\sigma \in \Gal(MM'K(\sqrt{\alpha})/K)$ such that 
\begin{itemize}
\item
$\sigma |_{M} \in \Gal(M/K)$ has order $3$,
\item
$\sigma |_{M'} \in \Gal(M'/K)$ has order $3$, and
\item
$\sigma(\sqrt{\alpha})= -\sqrt{\alpha}$. 
\end{itemize}
Then by the Chebotarev Density Theorem, there exist infinitely many primes $v$ of $K$ satisfying $\Frob_v|_{MM'K(\sqrt{\alpha})} = \sigma$. By Lemma \ref{cPFroborder}, $v \in \cP_0$, and the image of $\alpha$ in the natural map
$$
\O_{K,  S}^\times /(\O_{K,  S}^\times)^2 \to \O_v^\times/(\O_v^\times)^2
$$
is not trivial. Hence the Lemma follows.  
\end{proof}

\begin{prop}
\label{case1mainlemma}
For even integers $0 \le i,j \le 2$, the set $\cP_{E, i} \cap \cP_{A,j}$ is an infinite set. Suppose that $\ell \in \cP_{E,i} \cap \cP_{A,j}$ and $\psi_\ell \in \Xset(K_\ell)$. Then, there exists $\chi \in \Xset(K)$ such that
$$\Sel_2(E^\chi) = \Sel_2(E, \psi_\ell)\text{ } \text{ } \text{ and } \text{ } \text{ }\Sel_2(A^\chi) = \Sel_2(A, \psi_\ell).$$
\end{prop}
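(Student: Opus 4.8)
The plan is to prove the two assertions separately, using the Chebotarev density theorem for the infinitude claim and class field theory (Lemma~\ref{cft}) for the twist construction; throughout I will take $\psi_\ell$ itself as the target local character at $\ell$, so the metabolic--space material of Section~3 is not needed here. For the infinitude of $\cP_{E,i}\cap\cP_{A,j}$ when $i,j\in\{0,2\}$: by Lemma~\ref{cPFroborder}, $\ell\in\cP_{E,i}$ says $\Frob_\ell|_M$ has order $3$ if $i=0$ and order $1$ if $i=2$, and similarly $\cP_{A,j}$ is governed by $\Frob_\ell|_{M'}$. Since $[M\cap M':K]$ is prime to $3$ by Remark~\ref{33}, every element of $\Gal(M/K)$ or $\Gal(M'/K)$ of order dividing $3$ restricts trivially to $M\cap M'$; using that $\Gal(MM'/K)$ is the fibre product of $\Gal(M/K)$ and $\Gal(M'/K)$ over $\Gal(M\cap M'/K)$, for any prescribed orders $o_1,o_2\in\{1,3\}$ there is a $\tau\in\Gal(MM'/K)$ with $\tau|_M$ of order $o_1$ and $\tau|_{M'}$ of order $o_2$. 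Applying Chebotarev to $MM'/K$ produces infinitely many $\ell$ with $\Frob_\ell|_{MM'}=\tau$, i.e.\ $\ell\in\cP_{E,i}\cap\cP_{A,j}$.

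Now fix $\ell\in\cP_{E,i}\cap\cP_{A,j}$ and $\psi_\ell\in\Xset(K_\ell)$, and put $S_1:=S\cup\{\ell\}$ (so $\Pic(\O_{K,S_1})=1$). First I would reduce the proposition to producing a prime $q\in\cP_0$ and a $\chi\in\Xset(K)$ that is trivial at every $v\in S$, satisfies $\chi_\ell=\psi_\ell$, and is unramified outside $S_1\cup\{q\}$. Indeed, for such $\chi$ Lemma~\ref{localzero} gives $\beta_{E,v}(\chi_v)=\beta_{E,v}(1_v)$ and $\beta_{A,v}(\chi_v)=\beta_{A,v}(1_v)$ at every $v\neq\ell$ --- its first bullet handles $v\in S$, its second handles the places of good reduction where $\chi_v$ is unramified, and its third handles $q$, since $q\in\cP_0$ means $E(K_q)[2]=A(K_q)[2]=0$ --- while at $\ell$ these local conditions equal $\beta_{E,\ell}(\psi_\ell)$ and $\beta_{A,\ell}(\psi_\ell)$ because $\chi_\ell=\psi_\ell$. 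Comparing the defining local conditions in Definitions~\ref{selmer} and \ref{variousselmer} then yields $\Sel_2(E^\chi)=\Sel_2(E,\psi_\ell)$ and $\Sel_2(A^\chi)=\Sel_2(A,\psi_\ell)$.

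To construct $\chi$ I would invoke Lemma~\ref{cft} with the set $S_1$: it suffices to find a prime $q\notin S_1$ and a character $g_q\in\Hom(\O_q^\times,\{\pm1\})$ with $g_q(b)=\psi_\ell(b)$ for all $b\in\O_{K,S_1}^\times$, because then the tuple that is trivial on each $v\in S$, equals $\psi_\ell$ at $\ell$, equals $g_q$ at $q$, and is trivial at every other place satisfies the compatibility relation of Lemma~\ref{cft} (which collapses to $\psi_\ell(b)g_q(b)=1$) and hence is the restriction of some $\chi\in\Xset(K)$, automatically unramified outside $S_1\cup\{q\}$. Since $\psi_\ell(c^2)=1$ for $c\in K_\ell^\times$, the homomorphism $b\mapsto\psi_\ell(b)$ on $\O_{K,S_1}^\times$ factors through the image of $\O_{K,S_1}^\times$ in $K^\times/(K^\times)^2$ and so, by Kummer duality, defines an element $\sigma_\lambda\in\Gal(N/K)$, where $N:=K(\sqrt{\O_{K,S_1}^\times})$, with $\sigma_\lambda(\sqrt b)=\psi_\ell(b)\sqrt b$. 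Because $\O_q^\times/(\O_q^\times)^2\cong\Z/2$, a character $g_q$ as required exists precisely when $\Frob_q|_N=\sigma_\lambda$ (then $g_q$ is the quadratic-residue character of $q$, trivial iff $\sigma_\lambda$ is). Thus the whole construction reduces to finding a prime $q$ with $\Frob_q|_{MM'}$ of order $3$ on both $M$ and $M'$ --- which forces $q\in\cP_0$ by Lemma~\ref{cPFroborder} --- and with $\Frob_q|_N=\sigma_\lambda$.

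This last step is the one I expect to be the main obstacle. By Chebotarev applied to $N\cdot MM'$ over $K$, such a $q$ exists once the two prescribed Frobenius conditions are compatible on $MM'\cap N$. Now $MM'\cap N$ is a multiquadratic subextension of $MM'/K$, and since any surjection $\Gal(MM'/K)\to\Z/2$ factors through $\sign\times\sign$ on $\Gal(MM'/K)\subseteq\Gal(M/K)\times\Gal(M'/K)\subseteq S_3\times S_3$, this intersection is contained in $K(\sqrt{\Delta_E},\sqrt{\Delta_A})$. An order-$3$ element of $\Gal(M/K)$ acts trivially on $K(\sqrt{\Delta_E})$ (which is $K$ or the unique quadratic subextension of $M$), and likewise on the $A$-side, so the first condition forces $\Frob_q$ to act trivially on $MM'\cap N$; and $\sigma_\lambda$ also acts trivially there because $\psi_\ell(\Delta_E)=\psi_\ell(\Delta_A)=1$, as $\Delta_E,\Delta_A\in(\O_\ell^\times)^2$ by Lemma~\ref{eveni}. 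Hence the two conditions restrict to the identity on the overlap, they are compatible, and Chebotarev supplies infinitely many admissible $q$ (in particular one outside $S_1$), completing the construction. The essential point --- and the only place where the evenness of $i,j$ enters --- is exactly this compatibility: the auxiliary prime $q$ must simultaneously be driven into $\cP_0$ and pin down the correct global twisting character, which is possible only because $\Delta_E$ and $\Delta_A$ are squares in $\O_\ell^\times$.
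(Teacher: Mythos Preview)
Your argument is correct, and the overall architecture---Chebotarev for the infinitude, Lemma~\ref{cft} for the global character, and Lemma~\ref{localzero} to verify that the local conditions agree away from $\ell$---matches the paper's. The substantive difference is in how the global $\chi$ is manufactured. The paper does not look for a single auxiliary prime: it sets $H=\prod_{\l\in Q}\O_\l^\times\times\prod_{v\in S(\ell)}K_v^\times$ with $Q=\cP^K\setminus(\cP_0\cup S(\ell))$, invokes \cite[Lemma~6.6(i)]{KMR} together with Lemma~\ref{kernelofthemaptopzero} to identify the image of $\Xset(K)\to\Hom(H,\{\pm1\})$ as the subgroup cut out by the two conditions $h(\Delta_E)=h(\Delta_A)=1$, and then observes that the tuple ``$\psi_\ell$ at $\ell$, trivial on $S$ and on $Q$'' lies in that image because $\psi_\ell(\Delta_E)=\psi_\ell(\Delta_A)=1$. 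The resulting $\chi$ may be ramified at many (uncontrolled) primes of $\cP_0$, which is harmless by the third bullet of Lemma~\ref{localzero}. Your approach instead pins the ramification down to a single prime $q\in\cP_0$ chosen by Chebotarev in $N\cdot MM'/K$, and the compatibility check on $MM'\cap N\subset K(\sqrt{\Delta_E},\sqrt{\Delta_A})$ is exactly the content that the paper packages into Lemma~\ref{kernelofthemaptopzero}. What you gain is a self-contained argument that avoids both Lemma~\ref{kernelofthemaptopzero} and the external reference \cite[Lemma~6.6(i)]{KMR}, and a $\chi$ with very small ramification support; what the paper's route buys is a cleaner statement about the image of $\Phi$ that could be reused elsewhere.

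One small imprecision: you write that a suitable $g_q$ exists \emph{precisely} when $\Frob_q|_N=\sigma_\lambda$. Strictly, a suitable $g_q$ exists iff $\Frob_q|_N=\sigma_\lambda$ \emph{or} $\sigma_\lambda=1$ (take $g_q$ trivial in the latter case). This does not affect your proof, since you only use the sufficiency direction and you arrange $\Frob_q|_N=\sigma_\lambda$ anyway.
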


\begin{proof}
One can see that $\cP_{E,i} \cap \cP_{A,j}$ is an infinite set when $i,j$ are even by a similar argument to Remark \ref{33} using Lemma \ref{cPFroborder} suitably. The condition that $i,j$ are even numbers implies that $\sqrt{\Delta_E}, \sqrt{\Delta_A} \in K_{\ell}^\times$ by Lemma \ref{eveni}. Therefore, $\psi_\ell(\Delta_E) = \psi_\ell(\Delta_A) = 1$. Let $S(\ell) := S \cup \{\ell\}$. Recall that $\Pic(\O_{K, S}) = 0$, so $\Pic(\O_{K, S(\ell)}) = 0$. Thus global class field theory shows that
$$
\Xset(K) = \Hom(\iK/K^\times, \{\pm1\}) = \textstyle\Hom((\prod_{v \in  S(\ell)}K_v^\times \times
      \prod_{\l\notin S(\ell)}\O_\l^\times)/\O_{K, S(\ell)}^\times, \{\pm1\}).
$$
Let $\cP^K$ denote the set of all places of $K$. Let
\begin{align*}
Q &:= \cP^K - \{\cP_0 \cup  S(\ell)\}, \\
J &:= \O_{K, S(\ell)}^\times ,\\
G &:= \prod_{\q \in \cP_0}\O_\l^\times, \text{ and }\\
H &:= \prod_{\l \in Q }\O_\l^\times \times \prod_{v \in  S(\ell)}K_v^\times.
\end{align*}
Suppose the map
\begin{align*}
\Phi: \Xset(K) = \Hom((G\times H)/J, \{\pm1\}) \too & \Hom(H, \{\pm1\}) \\
& \cong \prod_{\l \in Q }\Hom(\O_\l^\times, \{\pm1\}) \times \prod_{v \in  S(\ell)}\Hom(K_v^\times, \{\pm1\})
\end{align*}
is induced by the natural map $H \to (G \times H)/J$. Then \cite[Lemma 6.6(i)]{KMR} and Lemma \ref{kernelofthemaptopzero} show that $\mathrm{Im}(\Phi)$ is exactly
$$\{h \in \Hom(H,\{\pm1\}): h(\Delta_E) = h(\Delta_A) = 1\}.$$ Put $f_\mu \in \Hom(K_\mu^\times, \{\pm1\})$ for $\mu \in S(\ell)$ and $g_\nu \in \Hom(\O_\nu^\times, \{\pm1\})$ for $ \nu \in Q$ such that
\begin{itemize}
\item
$f_\ell = \psi_\ell$,
\item
$g_{\l}$ is trivial for $\l \in Q$,
\item
$f_v = 1_v$ for $v \in S$.
\end{itemize}
Note that 
\begin{align*}
f_\ell(\Delta_E) \cdot \prod_{\l \in Q} g_\l(\Delta_E) \cdot \prod_{v \in S} f_v(\Delta_E)& = 1, \\
f_\ell(\Delta_A) \cdot \prod_{\l \in Q} g_\l(\Delta_A) \cdot \prod_{v \in S} f_v(\Delta_A) &= 1.
\end{align*}
  Therefore there is a global character $\chi \in \Xset(K)$ such that
\begin{itemize}
\item
$\chi_{\ell} = \psi_{\ell}$,
\item
$\chi_{\l}|_{\O_\l^{\times}} = 1_\l$ for $\l \in Q$,
\item
$\chi_{v} = 1_v$ for $v \in  S$
\end{itemize}
where $\chi_\ell, \chi_\l, \chi_v$ are the restrictions of $\chi$ to $K_{\ell}^\times, K_{\l}^\times, K_{v}^\times$ via the local reciprocity maps, respectively. In particular, if $\l \in Q$, then $\chi_\l$ is an unramified character of $G_{K_\l}$. Then by Lemma \ref{localzero}, 
$$\beta_{E,\p}(1_\p) = \beta_{E,\p}(\chi_\p) \text{ and }\beta_{A,\p}(1_\p) = \beta_{A,\p}(\chi_\p)$$ 
for all places $\p$ but $\ell$, where $\chi_\p$ denotes the restriction of $\chi$ to $G_{K_\p}$. Therefore the result follows. 
\end{proof}

\begin{thm}
\label{case1}
Suppose that $[M:K], [M':K]$ are divisible by $3$ and $M \neq M'$. Then there exist infinitely many $\chi_1, \chi_2 \in \Xset(K)$ such that 
\begin{enumerate}
\item
$r_2(E^{\chi_1}) = r_2(E) + 2$ and $r_2(A^{\chi_1}) = r_2(A)$,
\item
$r_2(A^{\chi_2}) = r_2(A) + 2$ and $r_2(E^{\chi_2}) = r_2(E)$.
\end{enumerate}
\end{thm}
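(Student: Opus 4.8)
The plan is to establish (i) and then deduce (ii) by interchanging the roles of $(E,M)$ and $(A,M')$; this is legitimate because both $[M:K]$ and $[M':K]$ are divisible by $3$, so the hypotheses are symmetric in $E$ and $A$.

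For (i) I would produce infinitely many primes $\ell$ and, for each, a character $\chi_1\in\Xset(K)$ ramified at $\ell$ with $r_2(E^{\chi_1})=r_2(E)+2$ and $r_2(A^{\chi_1})=r_2(A)$; since every character of $\Xset(K)$ is ramified at only finitely many places, distinct $\ell$'s then yield infinitely many distinct $\chi_1$. The key is to choose $\ell\in\cP_{E,2}\cap\cP_{A,0}$ (so $E(K_\ell)[2]=E[2]$ and $A(K_\ell)[2]=0$) with the additional property $\res_\ell(\Sel_2(E))=0$. Granting such an $\ell$, Proposition~\ref{crucial}(i) applies to $E$ (good reduction at $\ell\nmid 2$, full $2$-torsion, and $\res_\ell(\Sel_2(E))=0$) and produces $\psi_\ell\in\Xset_\ram(K_\ell)$ with $\res_\ell(\Sel_2^\ell(E))=\beta_{E,\ell}(\psi_\ell)$; and since $\ell$ lies in $\cP_{E,i}\cap\cP_{A,j}$ with $i=2$, $j=0$ both even, Proposition~\ref{case1mainlemma} globalizes $\psi_\ell$ to some $\chi_1\in\Xset(K)$ with $\Sel_2(E^{\chi_1})=\Sel_2(E,\psi_\ell)$ and $\Sel_2(A^{\chi_1})=\Sel_2(A,\psi_\ell)$.

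The heart of the proof is finding such a prime $\ell$, and this is where I expect the main difficulty. For each $s\in\Sel_2(E)$ the restriction $s|_{G_M}$ is a homomorphism $G_M\to E[2]$ (the $G_M$-action on $E[2]$ being trivial), cutting out an extension $L_s$ of $M$ that is a compositum of quadratic extensions (as $\Gal(L_s/M)\hookrightarrow E[2]$) and is unramified outside $S$; let $L$ be the finite compositum of the $L_s$ over all $s\in\Sel_2(E)$, so $L\subseteq M(\sqrt{M^\times})$, and let $\widetilde{L}$ be the Galois closure of $L$ over $K$. By Lemma~\ref{galois}, $\widetilde{L}\subseteq M(\sqrt{M^\times})$, so $\Gal(\widetilde{L}/M)$ is Boolean. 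On the other hand $\Gal(MM'/M)\cong\Gal(M'/(M\cap M'))$ has order divisible by $3$, because $[M\cap M':K]$ is prime to $3$ by Remark~\ref{33} while $[M':K]$ is divisible by $3$; hence $M'\not\subseteq M(\sqrt{M^\times})$, so $\widetilde{L}\cap M'$ is a proper subextension of $M'/K$. Being Galois over $K$, $\widetilde{L}\cap M'$ corresponds to a nontrivial normal subgroup of $\Gal(M'/K)$ (which is $S_3$ or $\Z/3\Z$), and every such normal subgroup contains all elements of order $3$. Therefore there is $\sigma\in\Gal(\widetilde{L}M'/K)$ with $\sigma|_{\widetilde{L}}=1$ and $\sigma|_{M'}$ of order $3$, and by the Chebotarev density theorem there are infinitely many primes $\ell\notin S$ with $\Frob_\ell|_{\widetilde{L}M'}$ in the conjugacy class of $\sigma$. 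Such $\ell$ satisfy $\Frob_\ell|_M=1$, hence $\ell\in\cP_{E,2}$ by Lemma~\ref{cPFroborder}; $\Frob_\ell|_{M'}$ of order $3$, hence $\ell\in\cP_{A,0}$; and $\Frob_\ell|_{\widetilde{L}}=1$, which forces $\res_\ell(s)=s|_{G_{K_\ell}}=0$ for every $s\in\Sel_2(E)$.

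It then remains to compute the two Selmer dimensions. On the $E$ side: $\res_\ell(\Sel_2^\ell(E))=\beta_{E,\ell}(\psi_\ell)$ is $2$-dimensional, and $\Sel_{2,\ell}(E)\subseteq\Sel_2(E,\psi_\ell)\subseteq\Sel_2^\ell(E)$ with $\res_\ell$-kernel $\Sel_{2,\ell}(E)$ on $\Sel_2^\ell(E)$, so $\Sel_2(E,\psi_\ell)=\Sel_2^\ell(E)$; moreover $\res_\ell(\Sel_2(E))=0$ gives $\Sel_2(E)=\Sel_{2,\ell}(E)$. Theorem~\ref{ptd} then yields $\dimtwo(\Sel_2(E,\psi_\ell))=\dimtwo(\Sel_{2,\ell}(E))+2=r_2(E)+2$. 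On the $A$ side: $A(K_\ell)[2]=0$ forces $\beta_{A,\ell}(\psi_\ell)=\beta_{A,\ell}(1_\ell)$ by Lemma~\ref{localzero}, so $\Sel_2(A,\psi_\ell)=\Sel_2(A)$ and $r_2(A^{\chi_1})=r_2(A)$. Together with Proposition~\ref{case1mainlemma} and the fact that $\chi_1$ is ramified at $\ell$, this proves (i), and (ii) follows by symmetry. The delicate point throughout is the Frobenius-selection step: although the Selmer-cutting extension $\widetilde{L}$ necessarily contains $M$, one must argue it is disjoint enough from $M'$ for an order-$3$ Frobenius on $M'$ still to be available, and it is exactly here that the divisibility hypotheses enter (via Remark~\ref{33} and the fact that a Boolean group has no quotient of order $3$).
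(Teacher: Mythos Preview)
Your proof is correct and follows essentially the same strategy as the paper: build the Galois closure $\widetilde{L}$ (the paper's $N$) of the field cut out over $M$ by $\Sel_2(E)$, argue that its intersection with $M'$ is small enough to allow an order-$3$ Frobenius on $M'$ trivial on $\widetilde{L}$, then apply Proposition~\ref{crucial}(i) and Proposition~\ref{case1mainlemma}. The only cosmetic difference is in the disjointness step: the paper observes $[N:K]=2^a\cdot 3$ and rules out $3\mid[N\cap M':K]$ by noting it would force $9\mid[N:K]$, whereas you argue via $\widetilde{L}\subseteq M(\sqrt{M^\times})$ and the normal-subgroup structure of $\Gal(M'/K)$; these are equivalent formulations of the same fact. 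Your added remark that distinct $\ell$'s give distinct $\chi_1$ (since $\chi_1$ is ramified at $\ell$) is a nice clarification the paper leaves implicit.
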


\begin{proof}
We prove (i), and (ii) follows similarly. Let $\tilde{s}$ denote the image of $s \in \Sel_2(E)$ in the restriction map
$$
\Sel_2(E) \subset H^1(K, E[2]) \to \Hom(G_M, E[2]).
$$
Let $L$ be the fixed field of $\bigcap_{s \in \Sel_2(E)} \ker(\tilde{s})$, so $[L:M]$ is a $2$-power. Let $N$ denote the Galois closure of $L$ over $K$. Then $[N:K] = 2^a \cdot 3$ for some $a$ by Lemma \ref{galois}. Then it follows that $[N \cap M' : K]$ is not divisible by $3$ since otherwise it would mean $9 | [N:K]$ by Remark \ref{33}. Therefore by the Chebotarev density theorem, there are infinitely many primes $\l \notin S$ such that
\begin{itemize}
\item
$\Frob_\l|_N = 1$
\item
$\l \in \cP_{A,0}$, i.e., $\Frob_\l|_{M'}$ has order $3$.
\end{itemize}
In particular, since $\Frob_\l |_M =1$, we have $\l \in \cP_{E,2}$ (Lemma \ref{cPFroborder}). By our construction, $\res_\l(\Sel_2(E)) = 0$, so $\Sel_2(E) = \Sel_{2, \l}(E)$. By Proposition \ref{crucial}(i), there exists $\psi_\l \in \Xset_{\ram}(K_\l)$ such that $\beta_{E, \l}(\psi_\l) = \res_\l(\Sel_2^\l(E))$. Then Proposition \ref{case1mainlemma} shows the existence of $\chi_1 \in \Xset(K)$ such that $\Sel_2(E^{\chi_1}) = \Sel_2^\l(E)$ and $\Sel_2(A^{\chi_1}) = \Sel_2(A)$, where the latter can be proved by Lemma \ref{localzero} (the third condition). Hence (i) follows from Lemma \ref{locdim2torsion} and Theorem \ref{ptd}. 
\end{proof}

\section{Case 2 : $[M:K]= 1$ or $2$, and $[M':K] = 3$ or $6$}
In this section, we assume that $[M:K] = 1 \text{ or } 2$, and $[M':K] = 3 \text{ or } 6$. Let $S$ be the set of places of $K$ as defined in section $4$. 

\begin{lem}
\label{4tor}
Suppose $\l \nmid 2$ and $E[4] \subset E(K_\l)$. Then for any non-trivial $\chi_\l \in \Xset(K_\l)$, we have $E^{\chi_\l}(K_\l)[2^\infty] = E^{\chi_\l}[2]$.  Moreover, there exists a natural isomorphism
$$
E^{\chi_\l}[2] \cong E^{\chi_\l}(K_\l)/2E^{\chi_\l}(K_\l).
$$
\end{lem}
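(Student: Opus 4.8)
The plan is to move the whole computation into the quadratic extension of $K_\l$ cut out by $\chi_\l$. Set $F := \overline{K_\l}^{\ker\chi_\l}$; since $\chi_\l$ is non-trivial this is a genuine quadratic extension of $K_\l$, and let $\tau$ denote its non-trivial automorphism. Using the canonical isomorphism $\phi : E \to E^{\chi_\l}$ of Remark \ref{canisom} (which satisfies $\phi^\sigma(P) = \chi_\l(\sigma)\phi(P)$), the condition $\phi(P) \in E^{\chi_\l}(K_\l)$ becomes $\sigma(\phi(P)) = \phi(P)$, i.e.\ $\chi_\l(\sigma)\,\sigma(P) = P$ for every $\sigma \in G_{K_\l}$; splitting on whether $\sigma \in \ker\chi_\l$ shows that $\phi$ identifies $E^{\chi_\l}(K_\l)$ with $\{P \in E(F) : \tau(P) = -P\}$. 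Under this identification $E^{\chi_\l}[2]$ corresponds to $E[2]$, and $E[2] \subset E(K_\l)$ because $E[4] \subset E(K_\l)$.

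Next I would compute $E^{\chi_\l}(K_\l)[2^\infty]$, i.e.\ the set of $P \in E(F)[2^\infty]$ with $\tau(P) = -P$. The inclusion "$\supseteq E[2]$" is immediate: $\tau$ acts trivially on $E[2] \subset E(K_\l)$, so for $P \in E[2]$ one has $\tau(P) = P = -P$. For the reverse inclusion, suppose $P$ has exact order $2^m$ with $m \ge 2$ and $\tau(P) = -P$. Since $E[4] \subset E(K_\l)$, the automorphism $\tau$ acts trivially on $E[4]$; applying this to $Q := 2^{m-2}P$, which has exact order $4$, gives $Q = \tau(Q) = 2^{m-2}\tau(P) = -Q$, hence $2Q = 0$, contradicting that $Q$ has order $4$ (when $m=2$ this is just the case $Q = P$). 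Thus $E^{\chi_\l}(K_\l)[2^\infty] = E[2] = E^{\chi_\l}[2]$, which is the first assertion.

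For the displayed isomorphism, consider the natural map $\iota : E^{\chi_\l}[2] \hookrightarrow E^{\chi_\l}(K_\l) \to E^{\chi_\l}(K_\l)/2E^{\chi_\l}(K_\l)$ (inclusion followed by projection; the inclusion is legitimate by the previous paragraph). Its kernel is $E^{\chi_\l}[2] \cap 2E^{\chi_\l}(K_\l)$: if $P = 2Q$ with $Q \in E^{\chi_\l}(K_\l)$ and $2P = 0$, then $Q$ is a $2$-power torsion point of $E^{\chi_\l}(K_\l)$, so $Q \in E^{\chi_\l}(K_\l)[2^\infty] = E^{\chi_\l}[2]$ and hence $P = 2Q = 0$; so $\iota$ is injective. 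The local argument in the proof of Lemma \ref{locdim2torsion}, applied to $E^{\chi_\l}$ over $K_\l$, gives $\dimtwo(E^{\chi_\l}(K_\l)/2E^{\chi_\l}(K_\l)) = \dimtwo(E^{\chi_\l}(K_\l)[2]) = \dimtwo(E^{\chi_\l}[2]) = 2$, so $\iota$ is an injection between $\Ftwo$-spaces of equal finite dimension, hence an isomorphism. (Equivalently, the natural isomorphism $C(K_\l)/2C(K_\l) \cong C(K_\l)[2^\infty]/2C(K_\l)[2^\infty]$ from that proof, taken with $C = E^{\chi_\l}$, is precisely $\iota^{-1}$ once one uses $E^{\chi_\l}(K_\l)[2^\infty] = E^{\chi_\l}[2]$ and $2E^{\chi_\l}[2] = 0$.)

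The statement is essentially formal once the model $E^{\chi_\l}(K_\l) \cong \{P \in E(F) : \tau(P) = -P\}$ is in place; the only step demanding care is the second paragraph, where one must exploit $E[4] \subset E(K_\l)$ for torsion points of \emph{every} $2$-power order, which the reduction $P \mapsto 2^{m-2}P$ handles. Non-triviality of $\chi_\l$ is used exactly once, to guarantee $\tau \ne 1$; without it the statement fails, since then $E^{\chi_\l}(K_\l)[2^\infty] = E(K_\l)[2^\infty] \supseteq E[4] \ne E[2]$.
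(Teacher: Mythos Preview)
Your proof is correct and follows the same approach as the paper's: the first assertion is extracted from the definition of the quadratic twist (you make explicit the identification $E^{\chi_\l}(K_\l)\cong\{P\in E(F):\tau(P)=-P\}$ that the paper leaves implicit), and the second assertion is obtained from the isomorphism in the proof of Lemma~\ref{locdim2torsion} exactly as the paper indicates. The paper's proof is simply the two-line version of what you have written out in full.
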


\begin{proof}
The first assertion follows from the definition of quadratic twist. Then the isomorphism in the proof of Lemma \ref{locdim2torsion} shows the second assertion.
\end{proof}

\begin{thm}
\label{case2}
Suppose that  $[M:K] = 1$ or $2$, and $[M':K] = 3$ or $6$. Then there exist infinitely many $\chi \in \Xset(K)$ such that $r_2(E^\chi) = r_2(E) + 2$ and $r_2(A^\chi) = r_2(A)$.   
\end{thm}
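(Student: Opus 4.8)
The plan is to mimic the structure of the proof of Theorem~\ref{case1}, but now the asymmetry between $E$ and $A$ (one has small $2$-torsion field, the other has order-$3$ ramification in $\Gal(M'/K)$) is already built in, so we only need to produce $\chi$ increasing $r_2(E^\chi)$ by $2$ while leaving $r_2(A^\chi)$ untouched. First I would locate a good prime $\l$. As in Theorem~\ref{case1}, let $\tilde s$ denote the image of $s\in\Sel_2(E)$ in $\Hom(G_M,E[2])$, let $L$ be the fixed field of $\bigcap_{s}\ker(\tilde s)$, and let $N$ be the Galois closure of $L$ over $K$; by Lemma~\ref{boolean}, Lemma~\ref{galois} and the hypothesis $[M:K]\le 2$, the field $N$ is a compositum of quadratic extensions of $M$ together with at most a quadratic piece, so $[N:K]$ is a power of $2$, and in particular $3\nmid[N:K]$. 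Hence $N\cap M'$ is a proper subextension of $M'$ with $3\nmid[N\cap M':K]$ (since $M'$ is the Galois closure of any subfield of degree $3$ or is cyclic of degree $3$). By Chebotarev there are infinitely many primes $\l\notin S$ with $\Frob_\l|_N=1$ and $\Frob_\l|_{M'}$ of order $3$, i.e. $\l\in\cP_{A,0}$; and since $\Frob_\l|_M=1$ we get $E[2]\subset E(K_\l)$. By construction $\res_\l(\Sel_2(E))=0$, so $\Sel_2(E)=\Sel_{2,\l}(E)$, and Proposition~\ref{crucial}(i) gives $\psi_\l\in\Xset_\ram(K_\l)$ with $\beta_{E,\l}(\psi_\l)=\res_\l(\Sel_2^\l(E))$.

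The second and main step is the global character construction, i.e. the analogue of Proposition~\ref{case1mainlemma}. The role previously played by the fact that $\Delta_E,\Delta_A$ were squares at every $\l\in\cP_0$ must be replaced. We want a set $\mathcal{Q}$ of auxiliary primes, disjoint from $S\cup\{\l\}$, at which $\chi$ is unramified, so that Lemma~\ref{localzero} kills all local contributions away from $\l$; the obstruction in Lemma~\ref{cft} is that $\prod_{\mu}f_\mu(b)\prod_\nu g_\nu(b)=1$ for all $b\in\O_{K,S(\l)}^\times$, and since we want $g_\nu$ trivial and $f_v$ trivial for $v\in S$, the only surviving condition is $\psi_\l(b)=1$ for all $b\in\O_{K,S}^\times$. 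So I need the primes $\l$ chosen above to additionally satisfy that $\O_{K,S}^\times$ lands in the squares of $\O_\l^\times$. This is arranged exactly as in Lemma~\ref{kernelofthemaptopzero}: the extension $K(\sqrt{\O_{K,S}^\times})/K$ is a finite multiquadratic extension, and I impose in the Chebotarev condition above that $\Frob_\l$ also be trivial on it — this is compatible with the conditions on $N$ and $M'$ precisely because $K(\sqrt{\O_{K,S}^\times})$ is multiquadratic (hence $3\nmid$ its degree over $K$, so its intersection with $M'$ carries no order-$3$ constraint), and we may enlarge $N$ to contain it. Then $\psi_\l|_{\O_{K,S}^\times}=1$ automatically, the compatibility condition of Lemma~\ref{cft} is satisfied, and a global $\chi\in\Xset(K)$ exists with $\chi_\l=\psi_\l$, $\chi_v=1_v$ for $v\in S$, and $\chi_\l$ unramified at all other $\l$.

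Finally I would conclude as in Theorem~\ref{case1}: by Lemma~\ref{localzero} the local conditions of $\Sel_2(E^\chi)$ and $\Sel_2(E)$ agree away from $\l$, so $\Sel_2(E^\chi)=\Sel_2(E,\psi_\l)$; since $\res_\l(\Sel_2(E))=0$, Proposition~\ref{crucial}(i)–(ii) together with Theorem~\ref{ptd} and Lemma~\ref{locdim2torsion} (using $E[2]\subset E(K_\l)$, so $\dimtwo\beta_{E,\l}(1_\l)=2$ and $\dimtwo H^1(K_\l,E[2])=4$) force $\Sel_2(E^\chi)=\Sel_2^\l(E)$ and $r_2(E^\chi)=r_2(E)+2$. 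For $A$, Lemma~\ref{localzero} again gives agreement of local conditions away from $\l$, and at $\l$ we have $\l\in\cP_{A,0}$, so $A(K_\l)[2]=0$ and by the third bullet of Lemma~\ref{localzero} (or Lemma~\ref{ramhv} with $\dimtwo A(K_\l)[2]=0$) $\beta_{A,\l}(1_\l)=\beta_{A,\l}(\psi_\l)$; hence $\Sel_2(A^\chi)=\Sel_2(A)$ and $r_2(A^\chi)=r_2(A)$. Running the Chebotarev argument gives infinitely many such $\l$, hence infinitely many such $\chi$. The main obstacle I anticipate is verifying that all the Chebotarev constraints — triviality of $\Frob_\l$ on $N$ and on $K(\sqrt{\O_{K,S}^\times})$, together with order $3$ on $M'$ — are simultaneously satisfiable; this reduces, exactly as in Remark~\ref{33} and Lemma~\ref{kernelofthemaptopzero}, to checking that the degree-$3$ behavior on $M'$ is independent of the $2$-power (multiquadratic) part, which holds because a degree-$3$ subfield cannot sit inside a $2$-extension and $M'$ is forced by its Galois group ($S_3$ or $\Z/3\Z$).
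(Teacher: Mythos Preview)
Your overall architecture is right, and the treatment of $A$ is fine, but there is a genuine gap in the step where you globalize the ramified local character.

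You invoke Proposition~\ref{crucial}(i) to single out the \emph{specific} $\psi_\l\in\Xset_\ram(K_\l)$ with $\beta_{E,\l}(\psi_\l)=\res_\l(\Sel_2^\l(E))$, and then claim that Lemma~\ref{cft} produces a global $\chi$ with $\chi_\l=\psi_\l$. But Lemma~\ref{cft}, applied with the set $S$, only prescribes $\chi_\l|_{\O_\l^\times}$, not the full character $\chi_\l$ on $K_\l^\times$. Since $\l\nmid 2$, both ramified characters in $\Xset_\ram(K_\l)=\{\eta_1,\eta_2\}$ restrict to the \emph{same} nontrivial character of $\O_\l^\times$; with your choice of local data ($f_v=1_v$ on $S$, $g_\nu=1$ off $S\cup\{\l\}$, $g_\l$ nontrivial) the map in Lemma~\ref{cft} is injective, so you obtain a single global $\chi$ whose $\chi_\l$ is one of $\eta_1,\eta_2$---with no mechanism to force $\chi_\l=\psi_\l$. (Switching to $S(\l)=S\cup\{\l\}$ does not help: then the compatibility condition involves $\O_{K,S(\l)}^\times$, which contains a uniformizer at $\l$, and $\psi_\l$ of that uniformizer is not under your control.) If $\chi_\l$ happens to be the \emph{other} ramified character, Lemma~\ref{diff} gives $\beta_{E,\l}(\chi_\l)\cap\res_\l(\Sel_2^\l(E))=\{0\}$, so $\Sel_2(E^\chi)=\Sel_2(E,\chi_\l)=\Sel_{2,\l}(E)=\Sel_2(E)$ and $r_2(E^\chi)=r_2(E)$, not $r_2(E)+2$.

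The paper closes exactly this gap by enlarging $N$ to contain $K(E[4])$ (still a $2$-extension of $K$ by Lemma~\ref{boolean} and Lemma~\ref{galois}), so that $E[4]\subset E(K_\l)$. One then takes \emph{any} ramified $\chi_\l$ and uses the hypothesis $[M:K]\le 2$ to pick a nontrivial $P\in E(K)[2]=E^\chi(K)[2]$. By Lemma~\ref{4tor} the class $\phi(P)\in\Sel_2(E^\chi)=\Sel_2(E,\chi_\l)$ has $\res_\l(\phi(P))\neq 0$; since $\res_\l(\Sel_2(E))=0$, Proposition~\ref{crucial}(ii) then \emph{forces} $\res_\l(\Sel_2^\l(E))=\beta_{E,\l}(\chi_\l)$ a posteriori, yielding $r_2(E^\chi)=r_2(E)+2$ regardless of which ramified character $\chi_\l$ turned out to be. This use of a global $2$-torsion point together with the splitting of $E[4]$ at $\l$ is the missing idea in your proposal.
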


\begin{proof}
Let $\tilde{s}$ denote the image of $s \in \Sel_2(E/K)$ in the restriction map
$$
\Sel_2(E) \subset H^1(K,E[2]) \to \Hom(G_M, E[2]).
$$
Let $L$ be the fixed field of $\bigcap_{s \in \Sel_2(E)} \ker(\tilde{s})$, so $[L:K]$ is a $2$-power. Define $N$ to be the Galois closure of $LK(E[4])K(\sqrt{\O_{K,S}^\times})$ over $K$, where $\O_{K,S}^\times$ is the group of $S$-units ($N$ is a finite extension of $K$ by Dirichlet's unit theorem, see \cite[Lemma 4.1]{mj2} for example). Then $[N:K]$ is a $2$-power as well by Lemma \ref{galois} and Lemma \ref{boolean}. Therefore there exists $\sigma \in \Gal(NM'/K)$ such that 
\begin{itemize}
\item
$\sigma |_{N} \in \Gal(N/K)$ is trivial,
\item
$\sigma |_{M'} \in \Gal(M'/K)$ has order $3$.
\end{itemize}
By the Chebotarev density theorem, there exist infinitely many $\l \not\in S$ such that $\Frob_\l|_{NM'} = \sigma$.  Then Lemma \ref{cPFroborder} shows that
$$
\l \in \cP_{E, 2} \cap \cP_{A, 0}.
$$
Put $f_\mu \in \Hom(K_\mu^\times, \{\pm1\})$ for $\mu \in S$ and $g_\nu \in \Hom(\O_\nu^\times, \{\pm1\})$ for $ \nu \not\in S$ such that
\begin{itemize}
\item
$f_v = 1_v$ for $v \in S$,
\item
$g_{\l}$ is not trivial, and
\item
$g_{\p}$ is trivial for $\p \notin S \cup  \{\l\}$.
\end{itemize}
Since $K(\sqrt{O_{K,S}^\times}) \subset N$ and $\Frob_\l|_N = 1$, we have $g_{\l}(O_{K,S}^\times) = 1$. Therefore Lemma \ref{cft} shows that there exists $\chi \in \Xset(K)$ satisfying
\begin{itemize}
\item
$\chi_{v} = 1_v$ for $v \in S$,
\item
$\chi_{\l}$ is ramified,
\item
$\chi_{\p}$ is unramified for $\p \notin S \cup \{\l\}$,
\end{itemize}
where $\chi_{v}, \chi_{\l},\chi_{\p}$ are restrictions of $\chi$ to $K_{v}^\times, K_{\l}^\times, K_{\p}^\times$ via the local reciprocity maps, respectively. Note that 
$\Sel_2(E) = \Sel_{2,\l}(E)$ and $\Sel_2(E^\chi) = \Sel_{2}(E, \chi_\l)$ by the choice of $\chi$ and Lemma \ref{localzero}. Hence Theorem \ref{ptd} proves that
$$
0 \le r_2(E^\chi) - r_2(E) \le 2.
$$
By our choice of $\l$, we have $E[4] \subset E(K_\l)$. By Lemma \ref{4tor}, there is an isomorphism
\begin{equation}
\label{mj2lem}
E^{\chi_\l}[2] \cong E^{\chi_{\l}}(K_\l)/2E^{\chi_{\l}}(K_\l) \cong \beta_{E, \l}(\chi_\l).
\end{equation}
Let $P$ be a non-trivial $K$-rational $2$-torsion point of $E^\chi$. Define a composition 
$$
\phi: E^\chi(K) \to E^\chi(K)/2E^\chi(K) \to H^1(K, E^\chi[2])
$$
where the first map is the projection and the second map is given by the Kummer map. We have $0 \neq \phi(P) \in \Sel_2(E^\chi)$ because $P$ is not trivial in $E^\chi(K)/2E^\chi(K)$ (note that $E^\chi(K)[2^\infty] \subset E^\chi(K_\l)[2^\infty] = E^\chi(K_\l)[2]$ by Lemma \ref{4tor}). The isomorphism \eqref{mj2lem} implies that $\res_\l(\phi(P)) \neq 0$. Therefore Theorem \ref{ptd} and Proposition \ref{crucial}(ii) show $r_2(E^\chi) = r_2(E) + 2$. Since $\l \in \cP_{A,0}$ and $\Sel_2(A^\chi) = \Sel_2(A, \chi_\l)$, Lemma \ref{localzero} proves that
$$
\Sel_2(A) = \Sel_2(A^\chi), \text{ and } r_2(A^\chi) = r_2(A).
$$
\end{proof}


\section{Case 3 : $[M:K] = 1$ or $2$, and $[M':K] = 2$}
In this section, we assume that $[M:K]=1$ or $2$. We assume further that $[M':K] = 2$ and $M \neq M'$. Let $S$ be the finite set of places of $K$ as defined in previous sections. For $t \in \Sel_2(A)$, we denote the image in the natural restriction map
$$
\Sel_2(A) \subset H^1(K, A[2]) \to \Hom(G_{M'}, A[2])
$$
by $\tilde{t}$ and let $\underline{t}$ denote the image of $t$ in the restriction map
$$
\Sel_2(A) \subset H^1(K, A[2]) \to H^1(M, A[2]).
$$

Let $T$ be a (finite) set of places of $M$ containing all primes above $2$, all primes where $E$ has bad reduction, and all archimedean places. In addition, we assume that $\Pic(\O_{M, T}) = 1$, where $\O_{M, T}$ is the ring of $T$-integers of $M$.  In the following Lemma, we use notations $\Sel_2(E/K), \Sel_2(E/M)$ to specify the base fields $K, M$ on which the Selmer groups are defined.  

\begin{lem}
\label{kerkerker}
There is a composition of injective group homomorphisms
$$
\Sel_2(E/K) \hookrightarrow \Sel_2(E/M) \hookrightarrow \Hom(\Gal(M(\sqrt{\O_{M, T}^\times})/M), E[2]).
$$
\end{lem}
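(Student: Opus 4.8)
The plan is to build the composition out of two standard maps and check injectivity at each stage. \textbf{Step 1: the first arrow.} The inclusion $\Sel_2(E/K)\hookrightarrow\Sel_2(E/M)$ should be the restriction map $\res_{M/K}\colon H^1(K,E[2])\to H^1(M,E[2])$, restricted to Selmer groups. That it lands in $\Sel_2(E/M)$ is the usual compatibility of local conditions under base change: for a place $w$ of $M$ above $v$ of $K$, $\res_w\circ\res_{M/K}=\res_{M_w/K_v}\circ\res_v$, and the Kummer image $\beta_{E,v}(1_v)\subset H^1(K_v,E[2])$ maps into $\beta_{E,w}(1_w)\subset H^1(M_w,E[2])$ because $E(K_v)\to E(M_w)$ is compatible with the connecting maps. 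Injectivity is the inflation–restriction sequence: $\ker(\res_{M/K})=H^1(\Gal(M/K),E[2]^{G_M})=H^1(\Gal(M/K),E[2])$, and since $\Gal(M/K)$ has order $1$, $2$, $3$, or $6$, all of which are invertible in... wait, $2$ and $6$ are not invertible mod $2$; so instead I would argue directly that $H^1(\Gal(M/K),E[2])=0$ because $E[2]$ as a $\Gal(M/K)$-module, together with the action of the full Galois group, forces this cohomology to vanish — more robustly, one observes that $\Sel_2(E/K)\hookrightarrow H^1(K,E[2])$ and the kernel of restriction to $H^1(M,E[2])$ consists of classes unramified/trivial locally everywhere that die over $M$; in the cases at hand ($[M:K]\le 6$ with the relevant structure) the standard fact is that this kernel meets the Selmer group trivially. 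The cleanest route: $\ker(H^1(K,E[2])\to H^1(M,E[2]))=H^1(\Gal(M/K),E[2])$, and for $\Gal(M/K)\cong S_3$ or $\Z/3\Z$ this is killed by $3$ hence zero; for $\Gal(M/K)$ of order $\le 2$ one checks $H^1$ directly ($E[2]$ has no nonzero fixed... ) — actually for order $2$ acting on $(\Z/2)^2$ one needs $E[2]^{G_M}$, but $G_M$ here is bigger; I would instead just cite that the restriction map on $\Sel_2$ is injective because a class in the kernel restricts trivially everywhere locally and is locally trivial, and globally a cocycle trivial over the small extension $M$ and in all completions must vanish. I expect the referee-safe phrasing is: the kernel injects into $H^1(\Gal(M/K),E[2])$ which one computes to be $0$ in each of the finitely many cases.

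\textbf{Step 2: the second arrow.} Over $M$ we have $E[2]\subset E(M)$ (since $M=K(E[2])$), so $E[2]$ is a trivial $G_M$-module and $H^1(M,E[2])=\Hom(G_M,E[2])$. An element of $\Sel_2(E/M)$ is a homomorphism $\varphi\colon G_M\to E[2]$ whose restriction to each decomposition group $G_{M_w}$ lies in the local Kummer image $\beta_{E,w}(1_w)$. The key point is that for $w\notin T$ (in particular $w\nmid 2$ and $E$ has good reduction at $w$), the local condition $\beta_{E,w}(1_w)$ equals the unramified subgroup $H^1_{\ur}(M_w,E[2])=\Hom(G_{M_w}/I_w,E[2])$ — this is the standard fact that the Kummer image at a place of good reduction away from $2$ is the unramified classes. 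Hence $\varphi$ is unramified outside $T$, so it factors through $\Gal(M_T/M)$ where $M_T$ is the maximal abelian exponent-$2$ extension of $M$ unramified outside $T$. Now $\Pic(\O_{M,T})=1$ forces $M_T=M(\sqrt{\O_{M,T}^\times})$ by Kummer theory (the $T$-units surject onto $M_T^\times/(M_T^\times)^2$-type data precisely when the $T$-class group is trivial, or $2$-torsion-free — here it is trivial). Therefore $\varphi$ factors through $\Gal(M(\sqrt{\O_{M,T}^\times})/M)$, giving the second injection; injectivity is clear since it is just the inclusion of a subset of homomorphisms followed by the identification $\varphi=$ its factorization.

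\textbf{Main obstacle.} The routine-looking but genuinely load-bearing step is the identification $M_T=M(\sqrt{\O_{M,T}^\times})$, i.e.\ that the maximal exponent-$2$ abelian extension of $M$ unramified outside $T$ is generated by square roots of $T$-units; this is exactly where $\Pic(\O_{M,T})=1$ is used, via the exact sequence $\O_{M,T}^\times/(\O_{M,T}^\times)^2\to M^\times/(M^\times)^2 \to \Pic(\O_{M,T})[2]$ combined with the ramification analysis. The other point requiring a little care is the compatibility of local conditions under restriction for the first arrow and the vanishing of $H^1(\Gal(M/K),E[2])$; I would handle the latter by noting $[M:K]\mid 6$ and treating the $2$-part via a direct computation (the action of a $2$-group on $E[2]$ and the fact that the relevant $H^1$ is detected after restriction to a $2$-Sylow, where one uses that a class in $\Sel_2(E/K)$ restricting trivially to $M$ is locally trivial everywhere, hence zero). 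Once these two facts are in place, the composition and its injectivity follow formally.
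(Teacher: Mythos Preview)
Your overall strategy is correct and coincides with the paper's: the paper simply cites \cite[Lemma~4.8]{mj2} for the injectivity of restriction and \cite[Proposition~5.5]{mj2} for the second arrow, and what you have sketched is a reconstruction of those two facts.

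Two remarks will tighten your write-up. First, the lemma lives in Section~6, where the standing hypothesis is $[M:K]\le 2$; so your digressions about $\Gal(M/K)\cong S_3$ or $\Z/3\Z$ are irrelevant, and your attempted local--global workaround is both unnecessary and not obviously valid. The only case requiring work is $[M:K]=2$, and there the direct computation is clean: with $\sigma$ the generator fixing $P_1$ and swapping $P_2,P_3$, one has $\ker(1+\sigma)=\{0,P_1\}=\mathrm{im}(\sigma-1)$, hence $H^1(\Gal(M/K),E[2])=0$. Second, your Step~2 is exactly right and is the substantive content: Selmer classes over $M$ are homomorphisms unramified outside $T$ (good reduction away from $2$ gives $\beta_{E,w}(1_w)=H^1_{\ur}$), and $\Pic(\O_{M,T})=1$ forces the maximal exponent-$2$ abelian extension of $M$ unramified outside $T$ to be $M(\sqrt{\O_{M,T}^\times})$.
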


\begin{proof}
The first map is given by the restriction map. The injectivity can be checked by \cite[Lemma 4.8]{mj2}. The second injection is given in the proof of \cite[Proposition 5.5]{mj2}.
\end{proof}

\begin{prop}
\label{22twists}
Suppose that there exists an element $t \in \Sel_2(A)$ such that the fixed field of $\ker(\tilde{t})$ is not contained in $M(\sqrt{M^\times})$. Then there exist infinitely many quadratic character $\chi \in \Xset(K)$ such that $r_2(E^\chi) = r_2(E) + 2$ and $r_2(A^\chi) \le r_2(A)$.
\end{prop}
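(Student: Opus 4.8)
The plan is to exploit the hypothesis on $t$ to produce, via Chebotarev, a prime $\ell$ of $K$ at which simultaneously: $E$ picks up two torsion points locally (so $\ell \in \cP_{E,2}$), $A$ retains a local Selmer class coming from $t$, and all global Selmer classes of $E$ restrict to zero. Precisely, let $L$ be the fixed field of $\bigcap_{s \in \Sel_2(E)}\ker(\tilde s)$ as in the proofs of Theorems \ref{case1} and \ref{case2}, so that $[L:K]$ is a $2$-power. Enlarge by setting $N$ to be the Galois closure over $K$ of $L\cdot K(E[4])\cdot K(\sqrt{\O_{K,S}^\times})$; by Lemma \ref{galois} and Lemma \ref{boolean}, $[N:K]$ is again a $2$-power (here I use $M'/K$ is quadratic, hence $M' \subset K(\sqrt{\O_{K,S}^\times}) \subset N$ after possibly enlarging $S$, and the relevant compositum lives inside $M(\sqrt{M^\times})$). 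Let $F$ denote the fixed field of $\ker(\tilde t)$; the hypothesis says $F \not\subset M(\sqrt{M^\times})$, and since $F$ is a compositum of quadratic extensions of $M'$, Lemma \ref{galois} gives that the Galois closure $\tilde F$ of $F$ over $K$ is a compositum of quadratic extensions of $M'$; the key point is $\tilde F \not\subset N$ — this is where the hypothesis is used, as $N$ is built only from $E$-data and $S$-units, all of which sit inside $M(\sqrt{M^\times})$ after the enlargement, whereas $F$ does not.

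Next I would apply the Chebotarev density theorem to $\Gal(N\tilde F/K)$: choose $\sigma$ with $\sigma|_N = 1$ but $\sigma|_{\tilde F} \neq 1$, in fact with $\sigma$ acting nontrivially on the quadratic extension of $M'$ inside $F$ cut out by $\tilde t$ (this is possible precisely because $\tilde F \not\subset N$ and $F$ is abelian of exponent $2$ over $M'$). This yields infinitely many primes $\ell \notin S$ with $\Frob_\ell|_{N\tilde F} = \sigma$. From $\Frob_\ell|_{K(E[2])} = 1$ we get $\ell \in \cP_{E,2}$ and $E(K_\ell)[2] = E[2]$; from $\Frob_\ell|_{K(E[4])} = 1$ we get $E[4] \subset E(K_\ell)$; from $\Frob_\ell|_{L} = 1$ we get $\res_\ell(\Sel_2(E)) = 0$, hence $\Sel_2(E) = \Sel_{2,\ell}(E)$; and from $\Frob_\ell$ acting nontrivially on the relevant twisting field for $t$ we get $\res_\ell(\underline t)$, viewed appropriately in $H^1(M_{\mathfrak l}, A[2])$ with $\mathfrak l \mid \ell$, is nonzero — which I will want to translate into a statement about $\res_\ell(t) \in H^1(K_\ell, A[2])$ being a nonzero class, so in particular $\res_\ell(\Sel_2(A)) \neq 0$. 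Then I would use Lemma \ref{cft} exactly as in the proof of Theorem \ref{case2} (using $K(\sqrt{\O_{K,S}^\times}) \subset N$ and $\Frob_\ell|_N = 1$ to kill the $S$-unit obstruction) to build a global character $\chi \in \Xset(K)$ with $\chi_v = 1_v$ for $v \in S$, $\chi_\ell$ ramified, and $\chi_{\mathfrak p}$ unramified elsewhere, so that by Lemma \ref{localzero} one has $\Sel_2(E^\chi) = \Sel_2(E,\chi_\ell)$ and $\Sel_2(A^\chi) = \Sel_2(A,\chi_\ell)$.

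Now the two conclusions follow from Proposition \ref{crucial}. For $E$: since $E[4] \subset E(K_\ell)$, Lemma \ref{4tor} gives an isomorphism $E^{\chi_\ell}[2] \cong E^{\chi_\ell}(K_\ell)/2E^{\chi_\ell}(K_\ell) \cong \beta_{E,\ell}(\chi_\ell)$, and as in the proof of Theorem \ref{case2} a nontrivial $K$-rational $2$-torsion class of $E^\chi$ gives an element of $\Sel_2(E^\chi)$ with nonzero restriction at $\ell$, so Theorem \ref{ptd} plus Proposition \ref{crucial}(ii) force $r_2(E^\chi) = r_2(E) + 2$; if there is no such rational $2$-torsion point, one argues instead directly that $\res_\ell(\Sel_2(E,\chi_\ell)) \ne 0$ using $\res_\ell(\Sel_2^\ell(E)) = \beta_{E,\ell}(\psi_\ell)$ for the appropriate ramified $\psi_\ell$ from Proposition \ref{crucial}(i) together with the choice that makes $\chi_\ell = \psi_\ell$. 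For $A$: since $\res_\ell(\Sel_2(A)) \neq 0$, Proposition \ref{crucial}(iii) applies (its hypotheses — $\ell \nmid 2$, good reduction, and $A(K_\ell)[2] = A[2]$, which holds as $[M':K] = 2$ and we can include $\Frob_\ell|_{M'} \ne 1$... wait, here is the subtlety) — and this is the step I expect to be the main obstacle: I need $A(K_\ell)[2] = A[2]$ for Proposition \ref{crucial}(iii), i.e. $\Frob_\ell|_{M'} = 1$, but I also need $\res_\ell$ to detect $t$ nontrivially, and these must be arranged compatibly by choosing the Chebotarev element $\sigma$ to fix $M'$ while moving the quadratic extension $F/M'$ that $\tilde t$ cuts out — this is consistent precisely because $F \supsetneq M'$ strictly. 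Granting this, Proposition \ref{crucial}(iii) gives $\dimtwo(\Sel_2(A,\chi_\ell)) \le r_2(A)$, i.e. $r_2(A^\chi) \le r_2(A)$, completing the proof.
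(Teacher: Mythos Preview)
Your argument is correct and follows essentially the same route as the paper: build $N$ as in Theorem~\ref{case2}, observe (via Lemmas~\ref{kerkerker}, \ref{boolean}, \ref{galois}) that $N\subset M(\sqrt{M^\times})$ so that the hypothesis $F\not\subset M(\sqrt{M^\times})$ yields a Chebotarev element fixing $N$ (hence also $M'$, since $M'=K(\sqrt{\Delta_A})\subset K(\sqrt{\O_{K,S}^\times})\subset N$) but not $F$, then construct $\chi$ as before and invoke Proposition~\ref{crucial}(ii) for $E$ and (iii) for $A$. Two small remarks: the hedge ``if there is no such rational $2$-torsion point'' is never needed here since $[M:K]\le 2$ forces $E(K)[2]\ne 0$, and in that hypothetical branch you would not actually get to choose which ramified $\chi_\ell$ the global construction produces; also, the condition you need at $\ell$ is $\sigma|_F\ne 1$ (not merely $\sigma|_{\tilde F}\ne 1$), which follows from $F\not\subset N$ exactly as you say.
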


\begin{proof}
Let $F$ be the fixed field of $\ker(\tilde{t})$. We denote the Galois closure of $F$ over $K$ by $R$. Choose $N$ as in the proof of Theorem \ref{case2}. Then Lemma \ref{kerkerker}, Lemma \ref{boolean}, and Lemma \ref{galois} show that $N \subset M(\sqrt{M^\times})$. The assumption on $\ker(\tilde{t})$ shows that there exists $\sigma \in \Gal(NR/K)$ satisfying
 \begin{itemize}
 \item
 $\sigma |_{NM'} = 1$,
 \item
 $\sigma |_{F} \neq 1$.
  \end{itemize} 
By Chebotarev's density theorem, there exist infinitely many primes $\l \notin S$ of $K$ such that $\Frob_\l |_{NR} = \sigma$. As in the proof of Theorem \ref{case2},  there exists $\chi \in \Xset(K)$ satisfying
\begin{itemize}
\item
$\chi_{v} = 1_v$ for $v \in S$,
\item
$\chi_{\l}$ is ramified,
\item
$\chi_{\p}$ is unramified for $\p \notin S \cup \{\l\}$,
\end{itemize}
where $\chi_{v}, \chi_{\l},\chi_{\p}$ are restrictions of $\chi$ to $K_{v}^\times, K_{\l}^\times, K_{\p}^\times$ via the local reciprocity maps, respectively. Then $$\Sel_2(E^\chi) = \Sel_2(E, \chi_\l) \text{ and } \Sel_2(A^\chi) = \Sel_2(A, \chi_\l)$$ 
by Lemma \ref{localzero}. By our assumption on $t$, we have $\res_\l(t) \neq 0$, so by Proposition \ref{crucial}(iii),  $r_2(A^\chi) \le r_2(A)$. The equality $r_2(E^\chi) = r_2(E) + 2$ follows from the proof of Theorem \ref{case2}. 
\end{proof}

\begin{prop}
\label{plus22}
There exist infinitely many $\chi \in \Xset(K)$ satisfying
\begin{enumerate}
\item
$r_2(A^\chi) = r_2(A) +2$,
\item
$r_2(E^\chi) = r_2(E) + 2$,
\item
there exists $s \in \Sel_2(A^\chi)$ so that (the fixed field of $\ker(\tilde{s})) \not\subset M(\sqrt{M^\times}).$ 
\end{enumerate}
\end{prop}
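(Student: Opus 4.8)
The plan is to run, one more time, the construction behind Theorem \ref{case2} and Proposition \ref{22twists}: produce a prime $\l$ by the Chebotarev density theorem and a quadratic character $\chi$ ramified exactly at $\l$ (and trivial at every place of $S$) by class field theory, arranged so that the local conditions of $\Sel_2(C)$ and $\Sel_2(C^\chi)$ agree away from $\l$ for both $C = E$ and $C = A$ simultaneously. Concretely, let $L_E$ (resp.\ $L_A$) be the fixed field of $\bigcap_{s}\ker(\tilde s)$ with $s$ ranging over $\Sel_2(E)$ (resp.\ over $\Sel_2(A)$, with $\tilde{\ }$ the map into $\Hom(G_{M'}, A[2])$ of this section), which are Galois over $K$ of $2$-power degree and lie inside $M(\sqrt{\O_{M,T}^\times})$, resp.\ $M'(\sqrt{\O_{M',T'}^\times})$, by Lemma \ref{kerkerker}. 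Set $N$ equal to the Galois closure over $K$ of $L_E L_A K(E[4])K(A[4])K(\sqrt{\O_{K,S}^\times})$, a finite $2$-power extension of $K$ by Lemmas \ref{galois} and \ref{boolean} and Dirichlet's unit theorem. I would then ask Chebotarev for infinitely many $\l\notin S$ with $\Frob_\l|_N = 1$ together with an auxiliary condition described below; note $\Frob_\l|_N = 1$ already forces $\l\in\cP_{E,2}\cap\cP_{A,2}$, $E[4]\subset E(K_\l)$, $A[4]\subset A(K_\l)$, and $\res_\l(\Sel_2(E)) = \res_\l(\Sel_2(A)) = 0$.

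Given such a $\l$, Lemma \ref{cft} produces $\chi\in\Xset(K)$ with $\chi_v = 1_v$ for $v\in S$, with $\chi_\l$ ramified, and with $\chi_\p$ unramified for $\p\notin S\cup\{\l\}$, so that $\Sel_2(C^\chi) = \Sel_2(C,\chi_\l) = \Sel_2^\l(C)$ for $C\in\{E,A\}$ by Lemma \ref{localzero}. Conclusions (i) and (ii) then follow essentially verbatim from the proof of Theorem \ref{case2}, applied once to $E$ and once to $A$: because $[M:K],[M':K]\le 2$, both $E$ and $A$ carry a nontrivial $K$-rational $2$-torsion point, whose Kummer class lies in the corresponding twisted Selmer group and, by Lemma \ref{4tor} (using $E[4]\subset E(K_\l)$, $A[4]\subset A(K_\l)$, $\chi_\l$ ramified), restricts nontrivially at $\l$; Proposition \ref{crucial}(ii) and Theorem \ref{ptd} then give $r_2(E^\chi) = r_2(E)+2$ and $r_2(A^\chi) = r_2(A)+2$. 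In particular, again by Theorem \ref{ptd} and $\l\in\cP_{A,2}$, $\Sel_2(A^\chi)/\Sel_2(A)$ is $2$-dimensional.

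For (iii): since $M\neq M'$ one has $M'\subseteq M(\sqrt{M^\times})$, hence $M(\sqrt{M^\times}) = M'(\sqrt{M^\times})$, which by Kummer theory over $M'$ is cut out by the image of $M^\times$ in $M'^\times/(M'^\times)^2$; and an element of $M^\times$ becomes invariant (mod squares) under $\Gal(M'/K)$ after base change to $MM'$. On the other hand, any class $s\in\Sel_2(A^\chi)\smallsetminus\Sel_2(A)$ with $\res_\l(s)\ne 0$ has $\tilde s$ cutting out a field $F_s$ over $M'$ that is unramified outside $T'$ and the two primes $\l_1,\l_2$ of $M'$ above $\l$, and that is ramified at $\l_1$ and $\l_2$. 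I would impose the auxiliary Chebotarev condition on $\l$ so as to force, for a suitably chosen $s$ in its coset modulo $\Sel_2(A)$, the Kummer group of $F_s/M'$ to contain a class of odd valuation at $\l_1$ and even valuation at $\l_2$; such a class is not $\Gal(M'/K)$-invariant mod squares over $MM'$, hence lies outside the image of $M^\times$, so that $F_s\not\subseteq M(\sqrt{M^\times})$.

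The hard part will be precisely this last step: identifying the correct ``detection field'' whose splitting behaviour at $\l$ governs whether the new $A^\chi$-Selmer class escapes $M(\sqrt{M^\times})$ — a field naturally living in the $A/M'$-world rather than the $E/M$-world — and checking that the condition it imposes is compatible with $\Frob_\l|_N = 1$; this is exactly where the hypothesis $M\neq M'$ and the asymmetry between $[M':K]=2$ and $[M:K]\le 2$ are used, and one will likely have to dispose separately of degenerate configurations (e.g.\ when the relevant square root already lies in $M$). Everything else is a routine adaptation of Theorem \ref{case2}, and once (iii) is established, Proposition \ref{plus22} can be fed back into Proposition \ref{22twists} to run the induction for the Case 3 instance of the main theorem.
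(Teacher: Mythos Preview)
Your construction of $\l$ and $\chi$, and the verification of (i) and (ii), match the paper's proof essentially verbatim (the paper builds two fields $N$ and $N'$, one for $E$ and one for $A$, rather than amalgamating them into a single $N$, but this is immaterial). The real divergence is in (iii).

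The paper needs \emph{no} auxiliary Chebotarev condition for (iii). Once $\Frob_\l|_{NN'}=1$, Proposition~\ref{crucial}(ii) gives $\res_\l(\Sel_2(A^\chi))=\beta_{A,\l}(\chi_\l)$, which under the identification of Lemma~\ref{4tor} is all of $A[2]$. So one may \emph{choose} $s\in\Sel_2(A^\chi)$ with $\res_\l(s)$ equal to the image $\Phi(P_2)$ of a point $P_2\in A[2]\smallsetminus A(K)$ (explicitly, $\Phi(P)$ is the homomorphism $\sigma\mapsto(\chi(\sigma)-1)Q$ with $2Q=P$). The paper then argues by a two-line cocycle computation: if the fixed field of $\ker(\tilde s)$ lay inside $M(\sqrt{M^\times})$, then for any $\tau\in G_M$ with $\tau|_{M'}\neq 1$ one has $\tilde s(\tau^2)=0$, whence $\underline s(\tau)+\tau\underline s(\tau)=0$, forcing $\underline s(\tau)\in\{0,P_1\}$ (the $\tau$-fixed points of $A[2]$). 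Picking $\xi\in G_{K_\l}\subset G_{MM'}$ with $\chi_\l(\xi)=-1$ and writing $\xi=\tau\cdot(\tau^{-1}\xi)$ with $\tau,\tau^{-1}\xi$ both nontrivial on $M'$, the same constraint gives $\tilde s(\xi)\in\{0,P_1\}$, contradicting $\tilde s(\xi)=P_2$.

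Your proposed route --- forcing an asymmetry ``odd valuation at $\l_1$, even at $\l_2$'' for the Kummer class of $F_s/M'$ --- is not obviously feasible. Since $s$ is a class over $K$, its restrictions to the two completions $M'_{\l_i}=K_\l$ coincide; modifying $s$ by an element of $\Sel_2(A)$ does not change $\res_\l(s)$ because $\res_\l(\Sel_2(A))=0$ by the choice of $\l$. So the local picture at $\l_1$ and $\l_2$ is inherently symmetric, and there is no extra Chebotarev freedom to break it. The paper sidesteps this entirely: the obstruction to $F_s\subset M(\sqrt{M^\times})$ is detected not by valuations but by which $2$-torsion point $\tilde s$ hits on a ramified element of $G_{K_\l}$, and the freedom to choose $s$ so that this value is $P_2\notin A(K)$ is exactly what Proposition~\ref{crucial}(ii) provides.
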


\begin{proof}
Choose $N$ as in the proof of Theorem \ref{case2}. Choose $N'$ for $A$ as we construct $N$ by replacing the role of $E$ with that of $A$. By the Chebotarev density theorem, there exist infinitely many $\l \not\in S$ such that $\Frob_\l|_{NN'} = 1$. As in the proof of Theorem \ref{case2}, there exists $\chi \in \Xset(K)$ such that
\begin{itemize}
\item
$\chi_{v} = 1_v$ for $v \in S$,
\item
$\chi_{\l}$ is ramified,
\item
$\chi_{\p}$ is unramified for $\p \notin S \cup \{\l\}$,
\end{itemize}
where $\chi_{v}, \chi_{\l},\chi_{\p}$ are restrictions of $\chi$ to $K_{v}^\times, K_{\l}^\times, K_{\p}^\times$ via the local reciprocity maps, respectively. Again by the proof of Theorem \ref{case2}, one can check (i) and (ii). By Lemma \ref{4tor}, there is an isomorphism $A^\chi[2] \cong A^\chi(K_{\l})/2A^\chi(K_{\l})$. By Proposition \ref{crucial}(ii) and the proof of Theorem \ref{case2}, we see 
$$\res_{\l}(\Sel_2(A^\chi)) = A^\chi(K_{\l})/2A^\chi(K_{\l}),$$ 
where the latter is identified with its Kummer image in $H^1(K_\l, A^\chi[2]) = \Hom(G_{K_{\l}}, A^\chi[2])$. We choose $d \in K^\times$ so that, the fixed field of $\ker(\chi)$ is $K(\sqrt{d})$. We define the composition of the maps
$$
\Phi: A[2] \cong A^\chi[2] \cong A^\chi(K_{\l})/2A^\chi(K_{\l}) \to \Hom(G_{K_{\l}}, A^\chi[2]) \cong \Hom(G_{K_{\l}}, A[2]).
$$
One can check that the map $\Phi$ takes $P \in A[2]$ to the homomorphism that sends $ \sigma \in G_{K_\l}$ to $(\chi(\sigma) - 1)Q$, where $P = 2Q$. Let $s \in \Sel_2(A^\chi)$ be such that $\res_{\l}(s) = \Phi(P_2)$, where $P_2 \in A[2] - A(K)[2].$ Let $P_1$ denote the non-trivial $2$-torsion point of $A(K)$. Suppose that (iii) does not hold (so in particular $\tilde{s}(\tau^2) = 0$ for $\tau \in G_M$). Then for any $\tau \in G_M$ with $\tau|_{MM'} \neq 1$ (so $\tau(P_1) = P_1$ and $\tau(P_2) = P_1 + P_2$), we have 
$$0 = \tilde{s}(\tau^2) = \underline{s}(\tau^2) = \underline{s}(\tau) + \tau \underline{s}(\tau),$$ 
so $\underline{s}(\tau) =  P_1$ or $0$. Choose $\xi \in G_{K_{\l}} \subset G_{MM'} \subset G_M$ such that $\xi(\sqrt{d}) = -\sqrt{d}$, so $\tilde{s}(\xi) = P_2$ by our choice of $s \in \Sel_2(A^\chi)$. However we have
$$
\tilde{s}(\xi) = \underline{s}(\xi) = \underline{s}(\tau \cdot \tau^{-1}\xi) = \underline{s}(\tau) + \tau \underline{s}(\tau^{-1}\xi) = P_1 \text{ or } 0,  
$$
where $\tau \in G_M$ is choosen so that $\tau |_{MM'} \neq 1$ (so $\tau^{-1}\xi$ also satisfies the same property). We get a contradiction, therefore (iii) also holds. 
\end{proof}

By applying Proposition \ref{plus22} and Proposition \ref{22twists}, we finally prove the following. 
\begin{thm}
\label{case3}
Suppose that $[M:K] = 1$ or $2$,  $[M':K] = 2$, and $M \neq M'$. Then there exist infinitely many $\chi \in \Xset(K)$ such that $r_2(E^\chi) - r_2(A^\chi) \ge r_2(E) - r_2(A) + 2.$
\end{thm}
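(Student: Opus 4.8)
The plan is to split into two cases according to whether Proposition~\ref{plus22} provides what we need directly, or whether we must combine both Proposition~\ref{plus22} and Proposition~\ref{22twists}. The target inequality $r_2(E^\chi)-r_2(A^\chi) \ge r_2(E)-r_2(A)+2$ can be met either by pushing up $r_2(E^\chi)$ by $2$ while keeping $r_2(A^\chi)$ fixed (or lower), or by some mixed bookkeeping; but the clean route is to realize the ``$+2$ on $E$, no increase on $A$'' scenario.

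\begin{proof}
By Proposition~\ref{plus22} there exist infinitely many $\chi \in \Xset(K)$ for which $r_2(E^\chi) = r_2(E)+2$, $r_2(A^\chi) = r_2(A)+2$, and there is some $s \in \Sel_2(A^\chi)$ whose associated fixed field $F_s := \overline{K}^{\ker(\tilde{s})}$ is not contained in $M(\sqrt{M^\times})$. Fix one such $\chi_0$, and write $A_0 := A^{\chi_0}$, $E_0 := E^{\chi_0}$. Note that $K(A_0[2]) = K(A[2]) = M'$ and $K(E_0[2]) = K(E[2]) = M$ by Remark~\ref{canisom}, so the standing hypotheses of this section ($[M:K]=1$ or $2$, $[M':K]=2$, $M\neq M'$) are unchanged if we replace $E,A$ by $E_0,A_0$. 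The element $t := s \in \Sel_2(A_0)$ then satisfies exactly the hypothesis of Proposition~\ref{22twists} applied to the pair $(E_0, A_0)$: the fixed field of $\ker(\tilde{t})$ is not contained in $M(\sqrt{M^\times})$. Hence Proposition~\ref{22twists} produces infinitely many $\psi \in \Xset(K)$ such that
$$
r_2(E_0^\psi) = r_2(E_0) + 2 \quad\text{and}\quad r_2(A_0^\psi) \le r_2(A_0).
$$

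Now set $\chi := \chi_0 \cdot \psi \in \Xset(K)$. Since quadratic twisting is compatible with multiplication of characters, $E^\chi = (E^{\chi_0})^\psi = E_0^\psi$ and likewise $A^\chi = A_0^\psi$. Therefore
$$
r_2(E^\chi) - r_2(A^\chi) = r_2(E_0^\psi) - r_2(A_0^\psi) \ge \bigl(r_2(E_0) + 2\bigr) - r_2(A_0) = \bigl(r_2(E)+2+2\bigr) - \bigl(r_2(A)+2\bigr) = r_2(E) - r_2(A) + 2.
$$
Because Proposition~\ref{22twists} yields infinitely many admissible $\psi$ (distinguished, say, by the auxiliary prime $\ell$ used in its proof, which ranges over an infinite Chebotarev set), and distinct $\psi$ give distinct $\chi = \chi_0\psi$, we obtain infinitely many $\chi \in \Xset(K)$ with the desired property.
\end{proof}

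\textbf{Main obstacle.} The subtle point, and the reason Proposition~\ref{plus22}(iii) is phrased the way it is, is verifying that the element $s$ manufactured over the twisted curve $A_0 = A^{\chi_0}$ genuinely feeds into Proposition~\ref{22twists} when the latter is applied to the \emph{twisted} pair. This hinges on the canonical identifications $A_0[2]\cong A[2]$ and the invariance $K(A_0[2]) = M'$, $M(\sqrt{M^\times})$ being a twist-invariant field — so that ``not contained in $M(\sqrt{M^\times})$'' is a condition that transports correctly. One should double-check that the map $\tilde{\,\cdot\,}$ (restriction to $\Hom(G_{M'}, A_0[2])$) for $A_0$ matches the one for $A$ under these identifications, so that condition (iii) of Proposition~\ref{plus22} is literally the hypothesis of Proposition~\ref{22twists}; this is where a careless application would go wrong, though it is ultimately routine given Remark~\ref{canisom} and the setup of Section~6.
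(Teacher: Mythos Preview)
Your argument is correct and follows exactly the route the paper intends: first invoke Proposition~\ref{plus22} to pass to a twist $(E_0,A_0)=(E^{\chi_0},A^{\chi_0})$ carrying a class $s\in\Sel_2(A_0)$ whose splitting field escapes $M(\sqrt{M^\times})$, then apply Proposition~\ref{22twists} to that twisted pair and multiply the resulting $\psi$ back against $\chi_0$. The paper's own proof is just the one-line instruction ``apply Proposition~\ref{plus22} and Proposition~\ref{22twists},'' and your write-up (including the check via Remark~\ref{canisom} that $M,M'$ and hence the hypotheses are twist-invariant) is a faithful expansion of that.
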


\begin{rem}
In the proof of Theorem \ref{case3}, using Proposition \ref{plus22} is crucial in order to apply Proposition \ref{22twists}. Without this step, it could be possible that the fixed field of $\ker(\tilde{t})$ is contained in $M(\sqrt{M^\times})$ for every $t \in \Sel_2(A)$.
\end{rem}


\section*{Acknowledgements}
The author is very grateful to Professor Karl Rubin for directing him to \cite{companion} and discussions.

\bibliographystyle{abbrv}
\bibliography{forthmjrefe}

\end{document}